\documentclass[11pt]{amsart}
\usepackage{amsmath,amssymb,latexsym,cancel,rotating}

\textwidth15.1cm \textheight21cm \headheight12pt
\oddsidemargin.4cm \evensidemargin.4cm \topmargin0.5cm

\addtolength{\marginparwidth}{-13mm}

\numberwithin{equation}{section}

\newtheorem{theorem}{Theorem}[section]
\newtheorem{proposition}[theorem]{Proposition}
\newtheorem{conjecture}[theorem]{Conjecture}
\newtheorem{corollary}[theorem]{Corollary}
\newtheorem{lemma}[theorem]{Lemma}

\theoremstyle{definition}

\newtheorem{remark}[theorem]{Remark}

\newtheorem{definition}[theorem]{Definition}

\input xy
\xyoption{poly}
\xyoption{2cell}
\xyoption{all}

\def\ZZ{\mathbb{Z}}

\def\Acal{\mathcal{A}}

\def\Fcal{\mathcal{F}}

\def\Tcal{\mathcal{T}}

\def\Hom{\text{Hom}}
\def\Ext{\text{Ext}}

\renewcommand{\eqref}[1]{{\rm (\ref{#1})}}

\begin{document}

\title{A quantum analogue of generic bases for affine cluster algebras}

\author{Ming Ding and Fan Xu}
\address{Institute for advanced study\\
Tsinghua University\\
Beijing 100084, P.~R.~China} \email{m-ding04@mails.tsinghua.edu.cn
(M.Ding)}
\address{Department of Mathematical Sciences\\
Tsinghua University\\
Beijing 100084, P.~R.~China} \email{fanxu@mail.tsinghua.edu.cn
(F.Xu)}


\subjclass[2000]{Primary  16G20, 16G70; Secondary  14M99, 18E30}
\thanks{The research was
supported  by NSF of China (No. 11071133)}

\keywords{cluster variable, quantum cluster algebra, tame quiver.}

\maketitle

\begin{abstract} We construct quantized versions of generic
bases in quantum cluster algebras of finite and affine types. Under
the specialization of  $q$ and coefficients to $1$, these bases are
generic bases of finite and affine cluster algebras.
\end{abstract}

\section{Introduction}
Cluster algebras were introduced by Fomin and Zelevinsky in order to
give a combinatorial framework for understanding total positivity in
algebraic groups and dual canonical bases in quantum groups
\cite{FZ}. Cluster algebras relate closely to algebraic
representation theory via two links. One is by cluster categories
(\cite{BMRRT}) and the Caldero-Chapoton map (\cite{caldchap}). The
other is by stable module categories over preprojective algebras
(\cite{GLS2006})  and evaluation forms (\cite{GLS2007}).

Let $Q=(Q_0, Q_1)$ be a finite quiver without oriented cycles where
$Q_0=\{1, 2, \cdots, n\}$ is the set of vertices and $Q_1$ is the
set of arrows. The (coefficient-free) cluster algebra
$\mathcal{A}(Q)$ is the subalgebra of $\mathbb{Q}(x_1, \cdots x_n)$
with a set of generators (called cluster variables). The cluster
category $\mathcal{C}_Q$ is the orbit category of $\mathcal{D}^b(Q)$
by the functor $\tau\circ [-1]$ (see \cite{BMRRT}). In
\cite{caldchap}, the authors introduced a map $X_{?}$ from the set
of objects in $\mathcal{C}_Q$ to $\mathbb{Z}[x^{\pm}_1, \cdots
x^{\pm}_n]$, called the Caldero-Chapoton map. There is a bijection
between the set of isoclasses of indecomposable rigid objects (i.e.,
without nontrivial self-extensions) in $\mathcal{C}_Q$ and the set
of cluster variables in $\mathcal{A}(Q)$ (see \cite [Theorem
4]{CK2}). Hence, the cluster algebra $\mathcal{A}(Q)$ can be viewed
as the subalgebra of $\mathbb{Z}[x^{\pm}_1, \cdots x^{\pm}_n]$
generated by  $X_M$ of indecomposable rigid objects $M$.

Quite differently, Geiss, Leclerc and Schr\"oer (\cite{GLS2006})
considered the preprojective algebra $\Lambda=\Lambda_Q$ of $Q$ and
the category $\mathrm{nil}(\Lambda)$ of finite dimensional nilpotent
$\Lambda$-modules. They (\cite{GLS2007}) attached to certain
preinjective representations $M$ of $Q$ a Frobenius subcategory
$\mathcal{C}_M$ of $\mathrm{nil}(\Lambda)$. If $M=I\oplus \tau(I)$
for the sum of indecomposable injective representations of $Q$, then
the stable category $\underline{\mathcal{C}}_M$ is triangle
equivalent to the cluster category $\mathcal{C}_Q.$ Let
$\mathcal{A}(\mathcal{C}_M)$ and
$\underline{\mathcal{A}}(\mathcal{C}_M)$ be the cluster algebra
associated to $\mathcal{C}_M$ and $\underline{\mathcal{C}}_M$,
respectively.

Let $\Lambda_{\textbf{d}}$ be the affine variety of nilpotent
$\Lambda$-modules with dimension vector $\textbf{d}=(d_i)_{i\in
Q_0}$. In \cite[Section 1.2]{GLS2007}, the authors defined the
evaluation form $\delta_M$ for $M\in \Lambda_{\textbf{d}}$. Define
$\langle M\rangle=\{N\in \Lambda_{\textbf{d}}\mid \delta_M=\delta_N
\}$. Let $Z$ be an irreducible component of $\Lambda_{\textbf{d}}.$
Then $M$ is a generic point in $Z$ if $\langle M\rangle\cap Z$
contains a dense open subset of $Z$. The involving dual
semicanonical basis is just the set of $\delta_{M}$ by taking a
generic point $M$ for each irreducible component $Z$. Geiss, Leclerc
and Schr\"oer (\cite{GLS2007}, \cite{GLS2010-1}) proved that
$\mathcal{A}(\mathcal{C}_M)$ is spanned by a subset of the dual
semicanonical basis and then specializes a basis of
$\underline{\mathcal{A}}(\mathcal{C}_M)$.

Analogously, Dupont (\cite{Dup}) introduced generic variables for
$\mathcal{A}(Q)$. Let $\mathbb{E}_{\textbf{d}}$ be the affine
variety of representations of $Q$ with dimension vector
$\textbf{d}.$ For $M\in \mathbb{E}_{\textbf{d}}$, define $\langle
M\rangle=\{N\in \mathbb{E}_{\textbf{d}}\mid X_M=X_N \}$. We say $M$
is generic if $\langle M\rangle$ contains a (dense) open subset of
$\mathbb{E}_{\textbf{d}}$ and then $X_{\textbf{d}}:=X_M$ is called
the generic variable for $\textbf{d}\in \mathbb{N}^n.$ The
definition can be naturally extended to $\textbf{d}\in
\mathbb{Z}^n.$ Set $\mathcal{G}(Q)=\{X_{\textbf{d}}\mid
\textbf{d}\in \mathbb{Z}^n\}.$ Dupont conjectured the set
$\mathcal{G}(Q)$ is the $\mathbb{Z}$-basis of $\mathcal{A}(Q)$ and
confirmed it for type $\widetilde{A}$. Recently, the conjecture has
been completely proved in \cite{GLS2010-2} (see also \cite{CK1} for
finite type, \cite{DXX} for affine type). Therefore, the set
$\mathcal{G}(Q)$ is called the generic basis of $\mathcal{A}(Q).$ In
particular, for an affine quiver $Q$, one obtain
$$
\mathcal{G}(Q)=\{X_{M}\mid M \text{ is rigid or } M\cong
nE_{\delta}\oplus R \mbox{ with } \mbox{ regular rigid }R \mbox{ and
regular simple } E_{\delta} \}.
$$

Compared with (dual) canonical basis, one would like to define the
quantized version of (dual) semicanonical basis for quantum groups.
The aim of the present note is to construct the quantized version of
generic bases for quantum cluster algebras of finite and affine
types. Quantum cluster algebras were introduced by A.~Berenstein and
A.~Zelevinsky \cite{berzel} as a noncommutative analogue of cluster
algebras \cite{ca1}\cite{ca2} to study canonical bases. A quantum
cluster algebra   is generated by a set of generators called the
\textit{quantum cluster variables} inside an ambient skew-field
$\mathcal{F}$. Under the specialization,  the quantum cluster
algebras are exactly cluster algebras which were introduced by
S.~Fomin and A.~Zelevinsky \cite{ca1}\cite{ca2}.

Let $q$ be a formal variable and $\mathcal{A}_q(Q)$ be the quantum
cluster algebra for $Q$ (see Section 2 for more details). Recently,
a quantum analog $X_{?}$ (we use the same notation as above without
causing confusion) of the Caldero-Chapoton map has been defined in
\cite{rupel} and refined in \cite{fanqin}. In \cite{fanqin}, the
author further showed that quantum cluster variables could be
expressed as images of indecomposable rigid objects under the
quantum Caldero-Chapoton map for acyclic equally valued quivers.

The note is organized as follows. In Section 2, we recall the
definition of quantum cluster algebras and the quantum
Caldero-Chapoton map. Section 3 is contributed to prove two
multiplication formulas for acyclic quantum cluster algebras. The
first formula (Theorem \ref{hall multi}) is a quantum version of the
formula $X_{M\oplus N}=X_MX_N$ in cluster algebras. It is also a
quantization of the degenerated form of Green's formula (See
\cite[Section 5]{Ding-Xiao-Xu}).  We will apply this formula to show
that the image of regular simple modules (over homogeneous tubes
with minimal imaginary root $\delta$ as dimension vector) under the
quantum Caldero-Chapoton map belong to quantum cluster algebras over
$\mathbb{Q}$ of affine type in Section 5. The second formula
(Theorem \ref{multi-formula} and \ref{exchange2}) generalizes the
quantum cluster multiplication formula in \cite{fanqin} to non-rigid
objects. The generalization is essential for this note. We apply
this formula to characterizing the regular modules over
non-homogeneous tubes with dimension vector $\delta$ (Lemma
\ref{k2}). In Section 4, we construct the quantum generic basis of a
quantum cluster algebra of finite type. By the specialization of $q$
and coefficients to $1$, the basis is no more than the good basis of
a finite-type cluster algebra in \cite{CK1}. The main results of
this note are shown in Section 5 and 6. We give a basis of an
affine-type quantum cluster algebras over $\mathbb{Q}$ in Section 5.
By the specialization of $q$ and coefficients to $1$, the basis is
just the generic basis of an affine-type cluster algebra in
\cite{DXX} and \cite{Dup}. In Section 6, we prove a formula for
quantum cluster algebras of type $\widetilde{A}$ and
$\widetilde{D}$. The formula characterizes the difference of regular
modules of dimension vector $\delta$ in a homogeneous tube and a
non-homogeneous tube. It is the quantum version of the difference
property in \cite{DXX} and \cite{Dup}. It helps us to refine the
basis  in Section 5 to integral bases.

\section{The quantum Caldero-Chapoton map}
\subsection{Quantum cluster algebras} The main reference for quantum cluster algebras is \cite{berzel}.
Here, we also recommend \cite[Section2]{fanqin} as a nice reference. Let $L$ be a lattice of rank $m$ and
$\Lambda:L\times L\to \ZZ$ a skew-symmetric bilinear form. Let $q$
be a formal variable and consider the ring of integer Laurent
polynomials $\ZZ[q^{\pm1/2}]$.  Define the \textit{based quantum
torus} associated to the pair $(L,\Lambda)$ to be the
$\ZZ[q^{\pm1/2}]$-algebra $\mathcal{T}$ with a distinguished
$\ZZ[q^{\pm1/2}]$-basis $\{X^e: e\in L\}$ and the  multiplication
given by
\[X^eX^f=q^{\Lambda(e,f)/2}X^{e+f}.\]
It is easy to see  that $\Tcal$ is associative and the basis
elements satisfy the following relations:
\[X^eX^f=q^{\Lambda(e,f)}X^fX^e,\ X^0=1,\ (X^e)^{-1}=X^{-e}.\]  It is known that $\Tcal$ is an Ore domain, i.e.,   is contained in its
skew-field of fractions $\Fcal$.  The quantum cluster algebra
 will be defined as a
$\ZZ[q^{\pm1/2}]$-subalgebra of $\Fcal$.

A \textit{toric frame} in $\Fcal$ is a map $M: \ZZ^m\to \Fcal
\setminus \{0\}$ of the form \[M({\bf c})=\varphi(X^{\eta({\bf
c})})\] where $\varphi$ is an automorphism of $\Fcal$ and $\eta:
\ZZ^m\to L$ is an  isomorphism of lattices.  By the definition, the
elements $M({\bf c})$ form a $\ZZ[q^{\pm1/2}]$-basis of the based
quantum torus $\Tcal_M:=\varphi(\Tcal)$ and satisfy the following
relations:
\[M({\bf c})M({\bf d})=q^{\Lambda_M({\bf c},{\bf d})/2}M({\bf c}+{\bf d}),\
M({\bf c})M({\bf d})=q^{\Lambda_M({\bf c},{\bf d})}M({\bf d})M({\bf
c}),\]
\[ M({\bf 0})=1,\ M({\bf c})^{-1}=M(-{\bf c}),\]
where $\Lambda_M$ is the skew-symmetric bilinear form on $\ZZ^m$
obtained from the lattice isomorphism $\eta$.  Let $\Lambda_M$ also
denote the skew-symmetric $m\times m$ matrix defined by
$\lambda_{ij}=\Lambda_M(e_i,e_j)$ where $\{e_1, \ldots, e_m\}$ is
the standard basis of $\ZZ^m$.  Given a toric frame $M$, let
$X_i=M(e_i)$.  Then we have
$$\Tcal_M=\ZZ[q^{\pm1/2}]\langle X_1^{\pm 1}, \ldots,
X_m^{\pm1}:X_iX_j=q^{\lambda_{ij}}X_jX_i\rangle.$$  An easy
computation shows that
\[M({\bf c})=q^{\frac{1}{2}\sum_{i<j}
c_ic_j\lambda_{ji}}X_1^{c_1}X_2^{c_2}\cdots X_m^{c_m}=:X^{{\bf c}} \
\ \ ({\bf c}\in\ZZ^m).\]

Let $\Lambda$ be an $m\times m$ skew-symmetric matrix and let
$\widetilde{B}$ be an $m\times n$ matrix for some positive integer
$n\leq m$. We call the pair $(\Lambda, \widetilde{B})$
\textit{compatible} if $\widetilde{B}^T\Lambda=(D|0)$ is an $n\times
m$ matrix with $D=diag(d_1,\cdots,d_n)$ where $d_i\in \mathbb{N}$
for $1\leq i\leq n$. The pair $(M,\widetilde{B})$ is called a
\textit{quantum seed} if the pair $(\Lambda_M, \widetilde{B})$ is
compatible.  Define the $m\times m$ matrix $E=(e_{ij})$ by
\[e_{ij}=\begin{cases}
\delta_{ij} & \text{if $j\ne k$;}\\
-1 & \text{if $i=j=k$;}\\
max(0,-b_{ik}) & \text{if $i\ne j = k$.}
\end{cases}
\]
For $n,k\in\ZZ$, $k\ge0$, denote ${n\brack
k}_q=\frac{(q^n-q^{-n})\cdots(q^{n-k+1}-q^{-n+k-1})}{(q^k-q^{-k})\cdots(q-q^{-1})}$.
Let ${\bf c}=(c_1,\ldots,c_m)\in\ZZ^m$ with $c_{k}\geq 0$.  Define
the toric frame $M': \ZZ^m\to \Fcal \setminus \{0\}$ as follows:
\begin{equation}\label{eq:cl_exp}M'({\bf c})=\sum^{c_k}_{p=0} {c_k \brack p}_{q^{d_k/2}} M(E{\bf c}+p{\bf b}^k),\ \ M'({\bf -c})=M'({\bf c})^{-1}.\end{equation}
where the vector ${\bf b}^k\in\ZZ^m$ is the $k-$th column of
$\widetilde{B}$.    Then the quantum seed $(M',\widetilde{B}')$ is
defined to be the mutation of $(M,\widetilde{B})$ in direction $k$.
In general, two quantum seeds $(M, \widetilde{B})$ and $(M',
\widetilde{B}')$ are mutation-equivalent if they can be obtained
from each other by a sequence of mutations, denoted by $(M,
\widetilde{B})\sim (M', \widetilde{B}')$. Let $\mathcal{C}=\{M'(e_i)
\mid (M, \widetilde{B})\sim (M', \widetilde{B}'), i=1, \cdots n\}$.
The elements of $\mathcal{C}$ are called \textit{quantum cluster
variables}. Let $\mathcal{P}=\{M(e_i): i=n+1, \cdots, m]\}$ and  the
elements of $\mathcal{P}$ are called \emph{coefficients}.  Given
$(M', \widetilde{B}')\sim (M, \widetilde{B})$ and ${\bf c}=(c_i)\in
\mathbb{Z}^m$, a element $M'(\bf c)$ is called  a \emph{quantum
cluster monomial} if $c_i\geq 0$ for $i=1, \cdots, n$ and $0$ for
$i=n+1, \cdots, m.$  We denote by $\mathbb{P}$ the multiplicative
group by $q^{\frac{1}{2}}$ and $\mathcal{P}$. Write $\mathbb{ZP}$ as
the ring of Laurent polynomials in the elements of $\mathcal{P}$
with coefficients in $\mathbb{Z}[q^{\pm 1/2}]$. Write $\mathbb{QP}$
as the ring of Laurent polynomials in the elements of $\mathcal{P}$
with coefficients in $\mathbb{Q}[q^{\pm 1/2}]$.  The \textit{quantum
cluster algebra} $\Acal_q(\Lambda_M,\widetilde{B})$ is the
$\mathbb{ZP}$-subalgebra of $\Fcal$ generated by $\mathcal{C}$. We
associate $(M,\tilde{B})$ a $\ZZ$-linear \emph{bar-involution} on
$\Tcal_M$ defined by
\[\overline{q^{r/2}M({\bf c})}=q^{-r/2}M({\bf c}), \ \  (r\in\ZZ,\
{\bf c}\in\ZZ^n).\]
 It is easy to show that
$\overline{XY}=\overline{Y}~\overline{X}$ for all $X,Y\in
\Acal_q(\Lambda_M, \widetilde{B})$ and that each element of
$\mathcal{C}\cup \mathcal{P}$ is \emph{bar-invariant}.

Now assume that there exists a finite field $k$ satisfying $|k|=q$.
In the same way, we can define based quantum torus
$\mathcal{T}_{|k|}$ and \emph{specialized quantum cluster algebras}
$\Acal_{|k|}(\Lambda_M,\widetilde{B})$ by substituting
$\mathbb{Z}[|k|^{\pm\frac{1}{2}}]$ for
$\mathbb{Z}[q^{\pm\frac{1}{2}}]$ in the above definition. By
\cite[Corollary 5.2]{berzel}, $\Acal_q(\Lambda_M, \widetilde{B})$
and $\Acal_{|k|}(\Lambda_M,\widetilde{B})$ are subalgebras of
$\mathcal{T}$ and $\mathcal{T}_{|k|}$, respectively. There is a
specialization map $ev: \mathcal{T}\rightarrow \mathcal{T}_{|k|}$ by
mapping $q^{\frac{1}{2}}$ to $|k|^{\frac{1}{2}}$, which induces a
bijection between quantum monomials of
$\Acal_{q}(\Lambda_M,\widetilde{B})$ and
$\Acal_{|k|}(\Lambda_M,\widetilde{B})$ (\cite[Section 2.2]{fanqin}).

\subsection{The quantum Caldero-Chapoton map}Let $k$ be a finite field with cardinality $|k|=q$ and
$m\geq n$ be two positive integers and $\widetilde{Q}$ an acyclic
quiver with vertex set $\{1,\ldots,m\}$ \cite{fanqin}. Denote the
subset $\{n+1,\dots,m\}$ by $C$. The elements in $C$ are called the
\emph{frozen vertices }, and $\widetilde{Q}$ is called an \emph{ice
quiver}. The full subquiver $Q$ on the vertices $1,\ldots,n$ is
called the \emph{principal part} of $\widetilde{Q}$.

Let $\widetilde{B}$ be the $m\times n$ matrix associated to the ice
quiver $\widetilde{Q}$, i.e., its entry in position $(i,j)$ is
\[
b_{ij}=|\{\mathrm{arrows}\, i\longrightarrow
j\}|-|\{\mathrm{arrows}\, j\longrightarrow i\}|
\]
for $1\leq i\leq m$, $1\leq j\leq n$. And let $\widetilde{I}$ be the
left $m\times n$ matrix of the identity matrix of size $m\times m$.
Further assume that there exists some antisymmetric $m\times m$
integer matrix $\Lambda$ such that
\begin{align}\label{eq:simply_laced_compatible}
\Lambda(-\widetilde{B})=\widetilde{I}:=\begin{bmatrix}I_n\\0
\end{bmatrix},
\end{align}
where $I_n$ is the identity matrix of size $n\times n$. Thus, the
matrix $\widetilde{B}$ is of full rank.

Let $\widetilde{R}$ and $\widetilde{R}^{tr}$ be the $m\times n$
matrix with its entry in position $(i,j)$ is
\[
\widetilde{r}_{ij}=\mathrm{dim}_{k}\mathrm{Ext}^{1}_{k\widetilde{Q}}(S_i,S_j)
\]
and
\[
\widetilde{r}^{*}_{ij}=\mathrm{dim}_{k}\mathrm{Ext}^{1}_{k\widetilde{Q}}(S_j,S_i)
\]
for $1\leq i\leq m$, $1\leq j\leq n$, respectively. Note that
\[
\mathrm{dim}_{k}\mathrm{Ext}^{1}_{k\widetilde{Q}}(S_i,S_j)=|\{\mathrm{arrows}\,
j\longrightarrow i\}|.
\]
 Denote the principal parts of
the matrices $\widetilde{B}$ and $\widetilde{R}$ by $B$ and $R$
respectively. Note that
$\widetilde{B}=\widetilde{R}^{tr}-\widetilde{R}$ and $B=R^{tr}-R$
where $R^{tr}$ represents the transposition of the matrix $R.$ In
general,  the matrix $B$ is not of full rank so that there exists no
matrix $\Lambda$ compatible with $B$. Hence, one need add  some
frozen vertices to $Q$ and then obtain an acyclic quiver
$\widetilde{Q}$ with a compatible pair $(\widetilde{B}, \Lambda).$

Let $\mathcal C_{\widetilde{Q}}$ be the cluster category of $k
\widetilde{Q}$, i.e., the orbit category of the derived category
$\mathcal{D}^b(\widetilde{Q})$ by the functor $F=\tau\circ[-1]$
where $\tau=\tau_{\widetilde{Q}}$ is the Auslander-Reiten
translation and $[1]$ is the translation functor. We note that the
indecomposable objects of the cluster category $\mathcal
C_{\widetilde{Q}}$ are either the indecomposable $k
\widetilde{Q}$-modules or $P_i[1]$ for indecomposable projective
modules $P_i$($1\leq i \leq m$).  Each object $M$ in $\mathcal
C_{\widetilde{Q}}$ can be uniquely decomposed in the following way:
$$M\cong M_0\oplus P_M[1]$$
where $M_0$ is a $k\widetilde{Q}$-module and $P_M$ is a projective
$k\widetilde{Q}$-module. Let $P_M=\bigoplus_{1\leq i \leq m}m_iP_i.$
We extend the definition of the dimension vector
$\mathrm{\underline{dim}}$ on modules in $\mathrm{mod}k
\widetilde{Q}$ to objects in $\mathcal C_{\widetilde{Q}}$ by setting
$$\mathrm{\underline{dim}}M=\mathrm{\underline{dim}}M_0-(m_i)_{1\leq i \leq m}.$$
The Euler form on $k\widetilde{Q}$-modules $M$ and $N$ is given by
$$\langle M,N\rangle=\mathrm{dim}_{k}\mathrm{Hom}_{k\widetilde{Q}}(M,N)-\mathrm{dim}_{k}\mathrm{Ext}^{1}_{k\widetilde{Q}}(M,N).$$
Note that the Euler form only depends on the dimension vectors of
$M$ and $N$. As in \cite{Hubery}, we define
$$
[M, N]=\mathrm{dim}_{k}\mathrm{Hom}_{k\widetilde{Q}}(M,N)\mbox{ and
}[M, N]^1=\mathrm{dim}_{k}\mathrm{Ext}^{1}_{k\widetilde{Q}}(M,N).
$$

The quantum Caldero-Chapoton map of an acyclic quiver
$\widetilde{Q}$ has been studied in \cite{rupel} and \cite{fanqin}.
Here, we reformulate their definitions to the following map
$$X^{\widetilde{Q}}_?: \mathrm{obj}\mathcal C_{\widetilde{Q}}\longrightarrow \Tcal$$
defined by the following rule: If $M$ is a $k Q$-module and $P$ is a
projective $k \widetilde{Q}$-module, then
                    $$
                       X^{\widetilde{Q}}_{M\oplus P[1]}=\sum_{\underline{e}} |\mathrm{Gr}_{\underline{e}} M|q^{-\frac{1}{2}
\langle
\underline{e},\underline{m}-\underline{e}\rangle}X^{\widetilde{B}\underline{e}-(\widetilde{I}-\widetilde{R})\underline{m}+\underline{\mathrm{dim}}
P/\mathrm{rad}P},
                    $$
where $\underline{\mathrm{dim}} M= \underline{m}$ and
$\mathrm{Gr}_{\underline{e}}M$ denotes the set of all submodules $V$
of $M$ with $\underline{\mathrm{dim}} V= \underline{e}$. Usually, we
omit the upper index $\widetilde{Q}$ in the notation
$X^{\widetilde{Q}}_?$ (except Section 4 and Section 5). It should
not cause any confusion. We note that
$$
X_{P[1]}=X_{\tau P}=X^{\underline{\mathrm{dim}} P/rad
P}=X^{\underline{\mathrm{dim}}\mathrm{soc}I}=X_{I[-1]}=X_{\tau^{-1}I}.
$$
for any projective $k\widetilde{Q}$-module $P$ and injective
$k\widetilde{Q}$-module $I$ with $\mathrm{soc}I=P/\mathrm{rad}P.$
Hereinafter, we denote by the corresponding underlined small letter
$\underline{x}$ the dimension vector of a $kQ$-module $X$ and view
$\underline{x}$ as a column vector in $\mathbb{Z}^n.$

\section{Multiplication theorems for acyclic quantum cluster algebras}
Throughout this section, assume that $\widetilde{Q}$ is an acyclic
quiver and $Q$ is its full subquiver. In this section, we will prove
a multiplication theorem for any acyclic quantum cluster algebra.
First, we improve Lemma 5.2.1 and Corollary 5.2.2 in \cite{fanqin},
i.e., here we handle the dimension vector of any $kQ$-module while
in \cite{fanqin} the author only deals with dimension vectors of
rigid modules.
\begin{lemma}\label{1}
For any dimension vector $\underline{m}, \underline{e},
\underline{f}\in \mathbb{Z}^{n}_{\geq 0},$ we have
$$(1)\ \Lambda((\widetilde{I}-\widetilde{R})\underline{m}, \widetilde{B}\underline{e})=-\langle \underline{e}, \underline{m}\rangle;$$
$$(2)\ \Lambda(\widetilde{B}\underline{e}, \widetilde{B}\underline{f})=\langle \underline{e}, \underline{f}\rangle-\langle \underline{f}, \underline{e}\rangle.$$
\end{lemma}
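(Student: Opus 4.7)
The plan is to reduce both identities to bookkeeping with block matrices. The driver is the compatibility relation $\Lambda(-\widetilde{B}) = \widetilde{I}$, which upon taking transposes and using antisymmetry of $\Lambda$ becomes $\widetilde{B}^{T}\Lambda = \widetilde{I}^{T}$. Beyond this, the only ingredients required are the elementary identities $\widetilde{I}^{T}\widetilde{I} = I_{n}$ and $\widetilde{R}^{T}\widetilde{I} = R^{T}$ (left-multiplication by $\widetilde{I}^{T}$, or equivalently right-multiplication by $\widetilde{I}$ after a transpose, extracts the principal $n\times n$ block), together with the matrix form of the Euler form, $\langle x,y\rangle = x^{T}(I_{n}-R)y$, which follows directly from $\langle S_{i},S_{j}\rangle = \delta_{ij}-\widetilde{r}_{ij}$ on simples. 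Nothing in the argument uses positivity of the entries, so the hypothesis $\underline{m},\underline{e},\underline{f}\in\mathbb{Z}^{n}_{\geq 0}$ is inessential.

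For (1), I would write $\Lambda(v,w)=v^{T}\Lambda w$ with $v=(\widetilde{I}-\widetilde{R})\underline{m}$ and $w=\widetilde{B}\underline{e}$, and apply $\Lambda\widetilde{B}=-\widetilde{I}$ on the right to collapse the product:
\[
\Lambda((\widetilde{I}-\widetilde{R})\underline{m},\widetilde{B}\underline{e})
= -\underline{m}^{T}(\widetilde{I}^{T}\widetilde{I}-\widetilde{R}^{T}\widetilde{I})\underline{e}
= -\underline{m}^{T}(I_{n}-R^{T})\underline{e}.
\]
Since this is a scalar, transposing gives $-\underline{e}^{T}(I_{n}-R)\underline{m}=-\langle\underline{e},\underline{m}\rangle$, as required.

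For (2), the same expansion yields
\[
\Lambda(\widetilde{B}\underline{e},\widetilde{B}\underline{f})
=\underline{e}^{T}\widetilde{B}^{T}\Lambda\widetilde{B}\underline{f}
=\underline{e}^{T}(\widetilde{I}^{T}\widetilde{B})\underline{f}
=\underline{e}^{T}B\underline{f}
=\underline{e}^{T}(R^{T}-R)\underline{f},
\]
using that $\widetilde{I}^{T}\widetilde{B}$ picks out the principal part $B=R^{T}-R$. On the Euler side, transposing the second scalar gives
\[
\langle\underline{e},\underline{f}\rangle-\langle\underline{f},\underline{e}\rangle
=\underline{e}^{T}(I_{n}-R)\underline{f}-\underline{e}^{T}(I_{n}-R)^{T}\underline{f}
=\underline{e}^{T}(R^{T}-R)\underline{f},
\]
so the two expressions agree.

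There is no conceptual obstacle; the argument is essentially linear algebra driven by the compatibility relation. The only pitfalls to monitor are (i) keeping the sign and orientation correct when converting between $\Lambda\widetilde{B}$ and $\widetilde{B}^{T}\Lambda$ via antisymmetry, and (ii) distinguishing $R$ from $R^{T}$ when computing $\widetilde{R}^{T}\widetilde{I}$, since precisely this is what produces the asymmetry between $\langle\underline{e},\underline{m}\rangle$ and $\langle\underline{m},\underline{e}\rangle$ in (1), and the difference $R^{T}-R$ in (2).
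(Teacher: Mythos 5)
Your proof is correct and is essentially the paper's own argument: both expand $\Lambda(v,w)=v^{tr}\Lambda w$, use the compatibility relation to replace $\Lambda\widetilde{B}$ (the paper uses $\Lambda\widetilde{B}=-\widetilde{I}$ directly, you use its transpose $\widetilde{B}^{tr}\Lambda=\widetilde{I}^{tr}$ in part (2), giving $\underline{e}^{tr}B\underline{f}$ instead of $-\underline{e}^{tr}B^{tr}\underline{f}$, which agree since $B$ is skew-symmetric), extract the principal $n\times n$ blocks, and conclude via $\langle \underline{x},\underline{y}\rangle=\underline{x}^{tr}(I_n-R)\underline{y}$ and transposition of scalars. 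Your side remark that nonnegativity of the dimension vectors is never used is also accurate.
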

\begin{proof}
By definition, we have \begin{eqnarray}
   && \Lambda((\widetilde{I}-\widetilde{R})\underline{m}, \widetilde{B}\underline{e})  \nonumber\\
   &=& \underline{m}^{tr}(\widetilde{I}-\widetilde{R})^{tr}\Lambda \widetilde{B}\underline{e}=-\underline{m}^{tr}(\widetilde{I}-\widetilde{R})^{tr}\begin{bmatrix}I_n\\0 \end{bmatrix}\underline{e}\nonumber\\
   &=& -\underline{m}^{tr}(I_{n}-R)^{tr}\underline{e}=-\underline{e}^{tr}(I_{n}-R)\underline{m}\nonumber\\
  &=& -\langle \underline{e}, \underline{m}\rangle.\nonumber
\end{eqnarray}
As for (2), the left side of the desired equation is equal to
$$\underline{e}^{tr}\widetilde{B}^{tr}\Lambda
\widetilde{B}\underline{f}=-\underline{e}^{tr}\widetilde{B}^{tr}\begin{bmatrix}I_n\\0
\end{bmatrix}\underline{f}=-\underline{e}^{tr}B^{tr}\underline{f}.$$
The right side is
\begin{eqnarray}
   && \langle \underline{e}, \underline{f}\rangle-\langle \underline{f}, \underline{e}\rangle  \nonumber\\
   &=& \underline{e}^{tr}(I_{n}-R)\underline{f}-\underline{f}^{tr}(I_{n}-R)\underline{e}\nonumber\\
   &=& \underline{e}^{tr}(I_{n}-R)\underline{f}-\underline{e}^{tr}(I_{n}-R)^{tr}\underline{f}\nonumber\\
  &=& \underline{e}^{tr}(R^{tr}-R)\underline{f}=-\underline{e}^{tr}(R-R^{tr})\underline{f}=-\underline{e}^{tr}B^{tr}\underline{f}.\nonumber
\end{eqnarray}
Thus we prove the lemma.
\end{proof}
\begin{corollary}\label{2}
For any dimension vector $\underline{m}, \underline{l},
\underline{e}, \underline{f}\in \mathbb{Z}^{n}_{\geq 0},$ we have
\begin{eqnarray}
   && \Lambda(\widetilde{B}\underline{e}-(\widetilde{I}-\widetilde{R})\underline{m},\widetilde{B}\underline{f}-(\widetilde{I}-\widetilde{R})\underline{l})  \nonumber\\
  &=&\Lambda((\widetilde{I}-\widetilde{R})\underline{m},(\widetilde{I}-\widetilde{R})\underline{l})+\langle \underline{e}, \underline{f}\rangle-\langle \underline{f}, \underline{e}\rangle-\langle \underline{e}, \underline{l}\rangle+\langle \underline{f}, \underline{m}\rangle.\nonumber
\end{eqnarray}
\end{corollary}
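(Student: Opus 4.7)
The plan is to unwind the expression using bilinearity of $\Lambda$ and then substitute the two identities of Lemma \ref{1}. Expanding
\[
\Lambda(\widetilde{B}\underline{e}-(\widetilde{I}-\widetilde{R})\underline{m},\ \widetilde{B}\underline{f}-(\widetilde{I}-\widetilde{R})\underline{l})
\]
bilinearly produces four pieces: the pure $\widetilde{B}$-$\widetilde{B}$ term $\Lambda(\widetilde{B}\underline{e},\widetilde{B}\underline{f})$, the pure $(\widetilde{I}-\widetilde{R})$-$(\widetilde{I}-\widetilde{R})$ term (which is to be left untouched, matching the first summand on the right-hand side of the claim), and two mixed cross-terms.

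First I would handle $\Lambda(\widetilde{B}\underline{e},\widetilde{B}\underline{f})$ directly via Lemma \ref{1}(2), replacing it by $\langle\underline{e},\underline{f}\rangle-\langle\underline{f},\underline{e}\rangle$. Next I would dispatch the mixed term $\Lambda((\widetilde{I}-\widetilde{R})\underline{m},\widetilde{B}\underline{f})$ by a direct application of Lemma \ref{1}(1), yielding $-\langle\underline{f},\underline{m}\rangle$. The remaining mixed term $\Lambda(\widetilde{B}\underline{e},(\widetilde{I}-\widetilde{R})\underline{l})$ has its arguments in the reverse order from what Lemma \ref{1}(1) assumes, so I would first invoke skew-symmetry $\Lambda(x,y)=-\Lambda(y,x)$ to flip it into the admissible form and then apply Lemma \ref{1}(1), obtaining $\langle\underline{e},\underline{l}\rangle$. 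Combining the four pieces with the signs produced by the bilinear expansion gives precisely the stated identity.

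There is no substantive obstacle here: the corollary is a purely formal bilinear consequence of Lemma \ref{1}, which was explicitly proved for \emph{arbitrary} $\underline{m},\underline{e},\underline{f}\in\mathbb{Z}^n_{\ge 0}$, so no new positivity or rigidity hypothesis needs to be checked. The only bookkeeping point requiring care is the sign on the term $\Lambda(\widetilde{B}\underline{e},(\widetilde{I}-\widetilde{R})\underline{l})$: one must remember that flipping the arguments of $\Lambda$ introduces a minus sign, which is then cancelled by the explicit minus sign delivered by Lemma \ref{1}(1), so that the final contribution $-\langle\underline{e},\underline{l}\rangle$ appears with the correct sign in the formula.
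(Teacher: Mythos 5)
Your proof is correct and is exactly the intended argument: the paper states Corollary \ref{2} without proof as an immediate bilinear expansion followed by the two identities of Lemma \ref{1} (using skew-symmetry of $\Lambda$ for the cross-term $\Lambda(\widetilde{B}\underline{e},(\widetilde{I}-\widetilde{R})\underline{l})$), and your sign bookkeeping matches the stated formula.
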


For any $kQ-$modules $M,N,E$, denote by $\varepsilon_{MN}^{E}$  the
cardinality of the set $\mathrm{Ext}^{1}_{kQ}(M,N)_{E}$ which is the
subset of $ \mathrm{Ext}^{1}_{kQ}(M,N)$ consisting of those
equivalence classes of short exact sequences with middle term
isomorphic to $M$ (\cite[Section 4]{Hubery}). For $kQ$-modules $M$,
$A$ and $B$, we denote by $F^M_{AB}$ the number of submodules $U$ of
$M$ such that $U$ is isomorphic to $B$ and $M/U$ is isomorphic to
$A$. Then by definition, we have
$$|\mathrm{Gr}_{\underline{e}}(M)|=\sum_{A, B;
\underline{\mathrm{dim}}B=\underline{e}}F_{AB}^M.
$$ Different from the case in cluster categories, for $kQ$-modules,
it does not generally hold  that $X_{N}X_{M}=X_{N\oplus M}.$ We have
the following explicit characterization, which is a generalization
of \cite[Proposition 5.3.2]{fanqin}.
\begin{theorem}\label{hall multi}
Let $M$ and $N$ be $kQ$-modules. Then
$$q^{[M,N]^{1}}X_{N}X_{M}=q^{-\frac{1}{2}\Lambda((\widetilde{I}-\widetilde{R})\underline{m},
(\widetilde{I}-\widetilde{R})\underline{n})}
\sum_{E}\varepsilon_{MN}^{E}X_E.$$
\end{theorem}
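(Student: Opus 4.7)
The strategy is to expand both sides of the claimed identity using the definition of the quantum Caldero-Chapoton map and to compare coefficients of the resulting Laurent monomials in the based quantum torus $\Tcal$. Since every middle term $E$ of a short exact sequence $0\to N\to E\to M\to 0$ satisfies $\underline{\mathrm{dim}}E=\underline{n}+\underline{m}$, both sides are supported on monomials of the form $X^{\widetilde{B}\underline{h}-(\widetilde{I}-\widetilde{R})(\underline{n}+\underline{m})}$ indexed by $\underline{h}\in\mathbb{Z}^n_{\geq 0}$, so a term-by-term comparison makes sense.

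First I would write out
\[
X_N X_M=\sum_{\underline{e},\underline{f}}|\mathrm{Gr}_{\underline{e}}N|\,|\mathrm{Gr}_{\underline{f}}M|\,q^{-\frac{1}{2}(\langle\underline{e},\underline{n}-\underline{e}\rangle+\langle\underline{f},\underline{m}-\underline{f}\rangle)}X^{\widetilde{B}\underline{e}-(\widetilde{I}-\widetilde{R})\underline{n}}X^{\widetilde{B}\underline{f}-(\widetilde{I}-\widetilde{R})\underline{m}},
\]
then multiply the two monomials using the commutation rule $X^{a}X^{b}=q^{\Lambda(a,b)/2}X^{a+b}$ and apply Corollary \ref{2} to compute the $\Lambda$-exponent. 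By skew-symmetry of $\Lambda$, the term $\Lambda((\widetilde{I}-\widetilde{R})\underline{n},(\widetilde{I}-\widetilde{R})\underline{m})$ produced by Corollary \ref{2} matches, up to sign, the prefactor $\Lambda((\widetilde{I}-\widetilde{R})\underline{m},(\widetilde{I}-\widetilde{R})\underline{n})$ in the statement, so I can factor that out of the sum. Expanding the right-hand side as $\sum_E\varepsilon^E_{MN}\sum_{\underline{h}}|\mathrm{Gr}_{\underline{h}}E|\,q^{-\frac{1}{2}\langle\underline{h},\underline{n}+\underline{m}-\underline{h}\rangle}X^{\widetilde{B}\underline{h}-(\widetilde{I}-\widetilde{R})(\underline{n}+\underline{m})}$ and matching coefficients of each monomial, the theorem reduces to a numerical identity of the schematic form
\[
q^{[M,N]^1}\sum_{\underline{e}+\underline{f}=\underline{h}}|\mathrm{Gr}_{\underline{e}}N|\,|\mathrm{Gr}_{\underline{f}}M|\,q^{\alpha(\underline{e},\underline{f})}=\sum_E\varepsilon^E_{MN}|\mathrm{Gr}_{\underline{h}}E|\,q^{-\frac{1}{2}\langle\underline{h},\underline{n}+\underline{m}-\underline{h}\rangle},
\]
where $\alpha$ records all of the Euler-form weights produced by the first two steps.

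To establish this numerical identity I would invoke the degenerated form of Green's formula cited in \cite[Section 5]{Ding-Xiao-Xu}. The underlying bijection is classical: for each extension $0\to N\to E\to M\to 0$ and each submodule $V\subseteq E$, intersection with $N$ and projection to $M$ yield a pair of submodules $V_1\subseteq N$ and $V_2\subseteq M$ with $\underline{\mathrm{dim}}V_1+\underline{\mathrm{dim}}V_2=\underline{h}$, and a double count of triples $(E,V)$ expresses $\sum_E\varepsilon^E_{MN}|\mathrm{Gr}_{\underline{h}}E|$ as a weighted sum of $|\mathrm{Gr}_{\underline{e}}N|\cdot|\mathrm{Gr}_{\underline{f}}M|$. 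The factor $|\mathrm{Ext}^1_{kQ}(M,N)|=q^{[M,N]^1}$ appears naturally because the same $V$ is realized by a whole $\mathrm{Ext}^1$-orbit of pairs.

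The main obstacle will be the bookkeeping of the $q$-exponents. Three distinct sources of quadratic forms must be reconciled: the Euler-form weights from the CC maps of $N$, $M$, and $E$; the $\Lambda$-term extracted via Corollary \ref{2}; and the Euler-form weight supplied by the degenerated Green formula. Using $\underline{h}=\underline{e}+\underline{f}$ together with the bilinearity of $\langle-,-\rangle$, one has to verify that after all cancellations the residual exponent on the left precisely matches $-\frac{1}{2}\langle\underline{h},\underline{n}+\underline{m}-\underline{h}\rangle$, with $q^{[M,N]^1}$ absorbing what remains. This is routine but is where the full strength of Lemma \ref{1} and Corollary \ref{2} is used.
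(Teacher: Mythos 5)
Your reduction of the theorem to a coefficient-wise counting identity is sound, and up to that point you follow the same route as the paper: expand via the quantum Caldero--Chapoton map, commute the two monomials with $X^aX^b=q^{\Lambda(a,b)/2}X^{a+b}$, and use Corollary \ref{2} to extract the prefactor involving $\Lambda((\widetilde{I}-\widetilde{R})\underline{m},(\widetilde{I}-\widetilde{R})\underline{n})$. The genuine gap is in the step you treat as citable or routine, namely the weighted identity
$$\sum_E \varepsilon^E_{MN}\,|\mathrm{Gr}_{\underline{h}}E| \;=\; q^{[M,N]^1}\sum_{\underline{b}+\underline{d}=\underline{h}} q^{\langle \underline{b},\,\underline{n}-\underline{d}\rangle}\,|\mathrm{Gr}_{\underline{b}}M|\,|\mathrm{Gr}_{\underline{d}}N| \qquad (B\subseteq M,\ D\subseteq N),$$
which is exactly what your coefficient comparison requires. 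Your justification --- that the factor $q^{[M,N]^1}$ ``appears naturally because the same $V$ is realized by a whole $\mathrm{Ext}^1$-orbit of pairs'' --- is false as a uniform statement: the fibre of $(\varepsilon,V)\mapsto (V\cap N,\,(V+N)/N)$ over a fixed pair of submodules has cardinality $q^{[M,N]^1+\langle\underline{b},\underline{n}-\underline{d}\rangle}$, not $q^{[M,N]^1}$; already for $M=N$ simple (so $\mathrm{Ext}^1_{kQ}(M,N)=0$) the fibres have sizes $q$ and $1$. Pinning down this varying Euler-form weight is the entire content of the step, and it does not come from Lemma \ref{1} or Corollary \ref{2}, which only govern the quantum-torus commutation. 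Moreover the source you invoke, the degenerated Green formula of \cite[Section 5]{Ding-Xiao-Xu}, is the specialized (Euler-characteristic, weight-free) version; as the introduction of the present paper points out, Theorem \ref{hall multi} is precisely its quantization, so citing it here is circular --- it cannot supply the $q$-weights.

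The paper closes exactly this gap by applying Green's formula in full at the level of Hall numbers,
$$\sum_E\varepsilon^E_{MN}F^E_{XY}=\sum_{A,B,C,D}q^{[M,N]-[A,C]-[B,D]-\langle A,D\rangle}F^M_{AB}F^N_{CD}\varepsilon^X_{AC}\varepsilon^Y_{BD},$$
then summing over the isomorphism classes $X,Y$ via $\sum_X\varepsilon^X_{AC}=q^{[A,C]^1}$ and $\sum_Y\varepsilon^Y_{BD}=q^{[B,D]^1}$, and using $[M,N]-[A,C]-[B,D]-\langle A,D\rangle+[A,C]^1+[B,D]^1=[M,N]^1+\langle B,C\rangle$; it is this last identity that produces the weight $\langle\underline{b},\underline{n}-\underline{d}\rangle$ depending only on dimension vectors. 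To repair your argument, either carry out this Hall-number computation or replace your citation by the genuinely $q$-weighted counting lemma available in \cite{Hubery}, \cite{rupel}, \cite{fanqin}; with that input, the rest of your exponent bookkeeping does go through.
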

\begin{proof}
We apply Green's formula in \cite{Green}
$$\sum_{E}\varepsilon_{MN}^{E}F^{E}_{XY}=\sum_{A,B,C,D}q^{[M,N]-[A,C]-[B,D]-\langle
A,D\rangle}F^{M}_{AB}F^{N}_{CD}\varepsilon_{AC}^{X}\varepsilon_{BD}^{Y}.$$
Then
\begin{eqnarray}
   && \sum_{E}\varepsilon_{MN}^{E}X_E  \nonumber\\
   &=& \sum_{E,X,Y}\varepsilon_{MN}^{E}q^{-\frac{1}{2}\langle Y,X\rangle}F^{E}_{XY}X^{\widetilde{B}\underline{y}-(\widetilde{I}-\widetilde{R})\underline{e}}\nonumber\\
  &=&\sum_{A,B,C,D,X,Y}q^{[M,N]-[A,C]-[B,D]-\langle
A,D\rangle-\frac{1}{2}\langle
B+D,A+C\rangle}F^{M}_{AB}F^{N}_{CD}\varepsilon_{AC}^{X}\varepsilon_{BD}^{Y}X^{\widetilde{B}\underline{y}-(\widetilde{I}-\widetilde{R})\underline{e}}.\nonumber
\end{eqnarray}
Since
\begin{eqnarray}
   && X^{\widetilde{B}\underline{y}-(\widetilde{I}-\widetilde{R})\underline{e}} \nonumber\\
   &=& X^{\widetilde{B}(\underline{b}+\underline{d})-(\widetilde{I}-\widetilde{R})(\underline{m}+\underline{n})}\nonumber\\
   &=& q^{-\frac{1}{2}\Lambda(\widetilde{B}\underline{d}-(\widetilde{I}-\widetilde{R})\underline{n},
\widetilde{B}\underline{b}-(\widetilde{I}-\widetilde{R})\underline{m})}X^{\widetilde{B}\underline{d}-(\widetilde{I}-\widetilde{R})\underline{n}}
X^{\widetilde{B}\underline{b}-(\widetilde{I}-\widetilde{R})\underline{m}}\nonumber\\
   &=& q^{-\frac{1}{2}\Lambda((\widetilde{I}-\widetilde{R})\underline{n},
(\widetilde{I}-\widetilde{R})\underline{m})-\frac{1}{2}[\langle
D,B\rangle-\langle B,D\rangle-\langle D,M\rangle+\langle
B,N\rangle]}X^{\widetilde{B}\underline{d}-(\widetilde{I}-\widetilde{R})\underline{n}}
X^{\widetilde{B}\underline{b}-(\widetilde{I}-\widetilde{R})\underline{m}}\nonumber\\
  &=&q^{-\frac{1}{2}\Lambda((\widetilde{I}-\widetilde{R})\underline{n},
(\widetilde{I}-\widetilde{R})\underline{m})}q^{\frac{1}{2}\langle
D,A\rangle-\frac{1}{2}\langle
B,C\rangle}X^{\widetilde{B}\underline{d}-(\widetilde{I}-\widetilde{R})\underline{n}}
X^{\widetilde{B}\underline{b}-(\widetilde{I}-\widetilde{R})\underline{m}}.\nonumber
\end{eqnarray}
Thus
\begin{eqnarray}
   && \sum_{E}\varepsilon_{MN}^{E}X_E  \nonumber\\
   &=& q^{\frac{1}{2}\Lambda((\widetilde{I}-\widetilde{R})\underline{m},
(\widetilde{I}-\widetilde{R})\underline{n})}\sum_{A,B,C,D}q^{[M,N]-[A,C]-[B,D]-\langle
A,D\rangle-\frac{1}{2}\langle
B+D,A+C\rangle+[A,C]^{1}+[B,D]^{1}}\cdot\nonumber\\
  &&q^{\frac{1}{2}\langle
D,A\rangle-\frac{1}{2}\langle
B,C\rangle}F^{M}_{AB}F^{N}_{CD}X^{\widetilde{B}\underline{d}-(\widetilde{I}-\widetilde{R})\underline{n}}
X^{\widetilde{B}\underline{b}-(\widetilde{I}-\widetilde{R})\underline{m}}.\nonumber
\end{eqnarray}
Here we use the following fact
$$\sum_{X}\varepsilon_{AC}^{X}=q^{[A,C]^{1}},\sum_{Y}\varepsilon_{BD}^{Y}=q^{[B,D]^{1}}$$
Note that
$$[M,N]-[A,C]-[B,D]-\langle A,D\rangle+[A,C]^{1}+[B,D]^{1}=[M,N]^{1}+\langle B, C\rangle.$$
Hence
\begin{eqnarray}
   && \sum_{E}\varepsilon_{MN}^{E}X_E  \nonumber\\
   &=& q^{\frac{1}{2}\Lambda((\widetilde{I}-\widetilde{R})\underline{m},
(\widetilde{I}-\widetilde{R})\underline{n})}q^{[M,N]^{1}}\sum_{A,B,C,D}q^{\langle
B,C\rangle-\frac{1}{2}\langle B,C\rangle-\frac{1}{2}\langle
D,A\rangle+\frac{1}{2}\langle D,A\rangle-\frac{1}{2}\langle
B,C\rangle}\cdot\nonumber\\
  && F^{N}_{CD}q^{-\frac{1}{2}\langle D,C\rangle}X^{\widetilde{B}\underline{d}-(\widetilde{I}-\widetilde{R})\underline{n}}
F^{M}_{AB}q^{-\frac{1}{2}\langle
B,A\rangle}X^{\widetilde{B}\underline{b}-(\widetilde{I}-\widetilde{R})\underline{m}}
\nonumber\\
 &=&q^{\frac{1}{2}\Lambda((\widetilde{I}-\widetilde{R})\underline{m},
(\widetilde{I}-\widetilde{R})\underline{n})}q^{[M,N]^{1}}X_{N}X_{M}.\nonumber
\end{eqnarray}
This completes the proof.
\end{proof}

\begin{remark}
Theorem \ref{hall multi} is similar to the multiplication formula in
dual Hall algebras. It is reasonable to conjecture that it provides
some PBW-type basis (\cite{GP}) in the corresponding quantum cluster
algebra.
\end{remark}

Let $M,N$ be $kQ-$modules and  assume that
$$\mathrm{dim}_{k}\mathrm{Ext}^{1}_{k\widetilde{Q}}(M,N)=\mathrm{dim}_{k}\mathrm{Hom}_{k\widetilde{Q}}(N,\tau M)=1.$$
Then there are two ``canonical'' exact
sequences
$$\varepsilon:\quad 0\longrightarrow N\longrightarrow E\longrightarrow M\longrightarrow 0$$
$$\varepsilon': \quad 0\longrightarrow D_{0}\longrightarrow N\longrightarrow \tau M\longrightarrow \tau A\oplus I\longrightarrow 0$$
which induces the $k$-bases of
$\mathrm{Ext}^{1}_{k\widetilde{Q}}(M,N)$ and
$\mathrm{Hom}_{k\widetilde{Q}}(N,\tau M)$, respectively. We fix
them. Set $M=M'\oplus P_0, A_0=A\oplus P_0$ where $P_0$ is a
projective $k\widetilde{Q}$-module, $A$ and $M'$ have no projective
summands. The exact sequences also provide the two non-split
triangles in $\mathcal{C}_{\widetilde{Q}}$:
$$N\longrightarrow E\longrightarrow M\longrightarrow N[1]=\tau N$$
and
$$M\longrightarrow D_{0}\oplus  A_0\oplus I[-1]\longrightarrow N\longrightarrow \tau M.$$

Now we state the first part of our multiplication theorem for
acyclic quantum cluster algebras, which can be viewed as a quantum
analogue of the one-dimensional Caldero-Keller multiplication
theorem in \cite{CK2}. The main idea in the proof comes from
\cite{Hubery}.
\begin{theorem}\label{multi-formula}
With the above notation,  assume that
$\mathrm{Hom}_{k\widetilde{Q}}(D_0,\tau A_0\oplus
I)=\mathrm{Hom}_{k\widetilde{Q}}(A_0,I)=0.$ Then the following
formula holds
$$ X_{N}X_M=q^{\frac{1}{2}\Lambda((\widetilde{I}-\widetilde{R})\underline{n},
(\widetilde{I}-\widetilde{R})\underline{m})}X_E+q^{\frac{1}{2}\Lambda((\widetilde{I}-\widetilde{R})\underline{n},
(\widetilde{I}-\widetilde{R})\underline{m})+\frac{1}{2}\langle
M,N\rangle-\frac{1}{2}\langle A_0, D_0\rangle}X_{D_0\oplus A_0\oplus
I[-1]}.$$
\end{theorem}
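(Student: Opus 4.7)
The plan is to derive the formula in two stages: first apply the quantum Hall multiplication theorem (Theorem \ref{hall multi}) to rewrite $X_N X_M$, then reduce the proof to a single key identity between $X_{M\oplus N}$, $X_E$ and $X_{D_0\oplus A_0\oplus I[-1]}$.

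Since $\dim_k\mathrm{Ext}^1_{k\widetilde{Q}}(M,N)=1$, the extension group has exactly $q$ elements. The zero class gives the split middle term $M\oplus N$, while all $q-1$ nonzero classes are $k^*$-multiples of the canonical class $\varepsilon$, so they all share the middle term $E$. Consequently
$$\sum_G \varepsilon^{G}_{MN}X_G = X_{M\oplus N}+(q-1)X_E.$$
Setting $\lambda:=\tfrac{1}{2}\Lambda((\widetilde{I}-\widetilde{R})\underline{n},(\widetilde{I}-\widetilde{R})\underline{m})$ and using skew-symmetry of $\Lambda$, Theorem \ref{hall multi} with $[M,N]^1=1$ yields
$$q\cdot X_N X_M = q^{\lambda}\bigl(X_{M\oplus N}+(q-1)X_E\bigr).$$
Dividing by $q$ and matching with the target formula, I see that Theorem \ref{multi-formula} is equivalent to the identity
$$X_{M\oplus N}-X_E \;=\; q^{\,1+\frac{1}{2}\langle M,N\rangle-\frac{1}{2}\langle A_0,D_0\rangle}\,X_{D_0\oplus A_0\oplus I[-1]}. \qquad(\ast)$$

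To prove $(\ast)$ I would expand both sides via the quantum Caldero-Chapoton map and compare the coefficient of each Laurent monomial $X^{\bf c}$. For a fixed dimension vector $\underline{g}$, the contributions come from $\mathrm{Gr}_{\underline{g}}(M\oplus N)$, $\mathrm{Gr}_{\underline{g}}(E)$, and Grassmannians attached to the cluster-category object $D_0\oplus A_0\oplus I[-1]$ (the $I[-1]$ summand contributes through the $\underline{\dim}\,I/\mathrm{rad}\,I$-term in the exponent). The two short exact sequences arising from splitting the canonical 4-term sequence $\varepsilon'$, namely
$$0\to D_0\to N\to C\to 0,\qquad 0\to C\to \tau M\to \tau A\oplus I\to 0,$$
together with the vanishing assumptions $\mathrm{Hom}(D_0,\tau A_0\oplus I)=\mathrm{Hom}(A_0,I)=0$, should set up a weighted bijection between the submodules of $M\oplus N$ that do \emph{not} correspond to submodules of $E$ (under the pullback along $\varepsilon$) and pairs of submodules attached to the summands of $D_0\oplus A_0\oplus I[-1]$. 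The vanishing hypotheses ensure there are no correction terms and that the Hom- and Ext-dimensions split additively across the decomposition, so the Grassmannian counts factorise and the $q$-exponents combine into the single factor $q^{1+\frac{1}{2}\langle M,N\rangle-\frac{1}{2}\langle A_0,D_0\rangle}$.

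The main obstacle is this Grassmannian comparison together with the meticulous bookkeeping of $q$-exponents. Several ingredients must line up simultaneously: the translation between $\Lambda$-pairings and Euler forms via Lemma \ref{1} and Corollary \ref{2}; the handling of the projective summand $P_0$ of $M$ (which is absorbed into $A_0=A\oplus P_0$); and the correct interpretation of $I[-1]$ on the right-hand side through the dimension-vector convention $\underline{\dim}(I[-1])=-\underline{\dim}\,I/\mathrm{rad}\,I$. Once the Grassmannian identity is in place, the Euler-form identity $\langle M,N\rangle-\langle A_0,D_0\rangle=\langle \underline{m}+\underline{n},-\rangle$-type reshuffling (obtained by additivity on the sequences above) will produce exactly the exponent required in $(\ast)$, completing the proof.
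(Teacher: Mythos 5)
Your first stage is fine: since $[M,N]^1=1$, all non-split classes share the middle term $E$, so Theorem \ref{hall multi} together with skew-symmetry of $\Lambda$ gives $q\,X_NX_M=q^{\lambda}\bigl(X_{M\oplus N}+(q-1)X_E\bigr)$ with $\lambda=\tfrac12\Lambda((\widetilde{I}-\widetilde{R})\underline{n},(\widetilde{I}-\widetilde{R})\underline{m})$, and the theorem is indeed equivalent to your identity $(\ast)$: $X_{M\oplus N}-X_E=q^{1+\frac12\langle M,N\rangle-\frac12\langle A_0,D_0\rangle}X_{D_0\oplus A_0\oplus I[-1]}$. This is consistent with the paper, whose proof amounts to writing $q^{-\lambda}X_NX_M=s_1+q^{\frac12\langle M,N\rangle-\frac12\langle A_0,D_0\rangle}s_2$ with $s_1=X_E$ and $s_2=X_{D_0\oplus A_0\oplus I[-1]}$.

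The genuine gap is that you never prove $(\ast)$, and $(\ast)$ is the entire content of the theorem; the reduction via Theorem \ref{hall multi} is a one-line step. Concretely, what is missing is the paper's computation of $s_2$: after expanding $X_{D_0\oplus A_0\oplus I[-1]}$ over pairs of submodules $Y\subseteq D_0$, $L\subseteq A_0$ (with the direct-sum weight $q^{[L,X]}$), one must match each such pair, through the fixed nonzero map $N\to\tau M$ with kernel $D_0$ and cokernel $\tau A\oplus I$, with a pair $(B\subseteq M, D\subseteq N)$ via $Y=D$, $K=A$, $B=B_0+L$, carrying the weight $\frac{q^{[B,N/D]^1}-1}{q-1}$ that measures exactly which terms of $X_{M\oplus N}$ fail to appear in $X_E$; and then one must verify that the $q$-exponents agree, which rests on the identities $\langle D,N\rangle+\langle M,D\rangle=\langle D,D_0\rangle+\langle A_0,D\rangle$ and $\langle A_0,A\rangle+\langle A,D_0\rangle=\langle M,A\rangle+\langle A,N\rangle$, obtained from the four-term sequence together with the hypotheses $\mathrm{Hom}_{k\widetilde{Q}}(D_0,\tau A_0\oplus I)=\mathrm{Hom}_{k\widetilde{Q}}(A_0,I)=0$ (and $[L,X]^1=0$). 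Your proposal only asserts that a ``weighted bijection'' \emph{should} exist and that the hypotheses \emph{should} prevent correction terms, without exhibiting the correspondence, the weights, or the Euler-form bookkeeping; in particular it does not establish that the difference $X_{M\oplus N}-X_E$ is a single power of $q^{1/2}$ times $X_{D_0\oplus A_0\oplus I[-1]}$, let alone the precise exponent $1+\tfrac12\langle M,N\rangle-\tfrac12\langle A_0,D_0\rangle$, nor does it show where the vanishing assumptions actually enter. As it stands, the proposal is a correct reformulation plus a plan, not a proof.
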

Here, we note that
$$\frac{q^{{[M,N]^1}}-1}{q-1}X_{N}X_M=X_{N}X_M.$$
\begin{proof}
By definition, we have
\begin{eqnarray}
   && X_{N}X_M  \nonumber\\
   &=& \sum_{C,D}q^{-\frac{1}{2}\langle D,C\rangle}F^{N}_{CD}X^{\widetilde{B}\underline{d}-(\widetilde{I}-\widetilde{R})\underline{n}}
   \sum_{A,B}q^{-\frac{1}{2}\langle B,A\rangle}F^{M}_{AB}X^{\widetilde{B}\underline{b}-(\widetilde{I}-\widetilde{R})\underline{m}}\nonumber\\
  &=& \sum_{A,B,C,D}F^{M}_{AB}F^{N}_{CD}q^{-\frac{1}{2}\langle D,C\rangle-\frac{1}{2}\langle B,A\rangle+\frac{1}{2}\Lambda(\widetilde{B}\underline{d}-
  (\widetilde{I}-\widetilde{R})\underline{n},
\widetilde{B}\underline{b}-(\widetilde{I}-\widetilde{R})\underline{m})}X^{\widetilde{B}(\underline{b}+\underline{d})-(\widetilde{I}-\widetilde{R})(\underline{m}+\underline{n})}
\nonumber\\
 &=&\sum_{A,B,C,D}F^{M}_{AB}F^{N}_{CD}q^{-\frac{1}{2}\langle B+D,A+C\rangle}q^{\langle B,C\rangle}q^{\frac{1}{2}\Lambda((\widetilde{I}-\widetilde{R})\underline{n},
(\widetilde{I}-\widetilde{R})\underline{m})}X^{\widetilde{B}(\underline{b}+\underline{d})-(\widetilde{I}-\widetilde{R})(\underline{m}+\underline{n})}.\nonumber
\end{eqnarray}

We set
$$s_1:=\sum_{E\ncong M\oplus N}\frac{\varepsilon_{MN}^{E}}{q-1}X_E=\sum_{X,Y,E\ncong M\oplus N}
\frac{\varepsilon_{MN}^{E}}{q-1}F^{E}_{XY}q^{-\frac{1}{2}\langle Y,X\rangle}
   X^{\widetilde{B}\underline{y}-(\widetilde{I}-\widetilde{R})\underline{e}}$$
As in the proof of Theorem \ref{hall multi}, we have
\begin{eqnarray}
   && \sum_{X,Y,E}\varepsilon_{MN}^{E}X_E  \nonumber\\
   &=& \sum_{A,B,C,D,X,Y}q^{[M,N]-[A,C]-[B,D]-\langle
A,D\rangle-\frac{1}{2}\langle
B+D,A+C\rangle}F^{M}_{AB}F^{N}_{CD}\varepsilon_{AC}^{X}\varepsilon_{BD}^{Y}X^{\widetilde{B}\underline{y}-(\widetilde{I}-\widetilde{R})\underline{e}}\nonumber\\
  &=& \sum_{A,B,C,D}q^{[M,N]^{1}+\langle
B,C\rangle-\frac{1}{2}\langle
B+D,A+C\rangle}F^{M}_{AB}F^{N}_{CD}X^{\widetilde{B}\underline{y}-(\widetilde{I}-\widetilde{R})\underline{e}}.\nonumber
\end{eqnarray}
On the other hand
$$X_{M\oplus N}=\sum_{A,B,C,D}q^{[B,C]-\frac{1}{2}\langle
B+D,A+C\rangle}F^{M}_{AB}F^{N}_{CD}X^{\widetilde{B}(\underline{b}+\underline{d})-(\widetilde{I}-\widetilde{R})(\underline{m}+\underline{n})}.$$
Thus
$$s_1=\sum_{A,B,C,D}\frac{q^{[M,N]^{1}}-q^{[B,C]^{1}}}{q-1}q^{\langle B,C\rangle-\frac{1}{2}\langle
B+D,A+C\rangle}F^{M}_{AB}F^{N}_{CD}X^{\widetilde{B}(\underline{b}+\underline{d})-(\widetilde{I}-\widetilde{R})(\underline{m}+\underline{n})}.$$

Thirdly we compute the term
$$s_2:=\sum_{A,D_0,I,D_0\ncong N}\frac{|\mathrm{Hom}_{k\widetilde{Q}}(N,\tau M)_{D_{0}AI}|}{q-1}X_{A_{0}\oplus D_{0}\oplus I[-1]}.$$
Here, we use the following notation as in \cite{Hubery}
$$\mathrm{Hom}_{k\widetilde{Q}}(N,\tau M)_{D_0AI}:=\{f\neq 0: N\longrightarrow \tau
M|\mathrm{ker}f\cong D_0, \mathrm{coker}f\cong \tau A\oplus I\}.$$
Note that $\mathrm{dim}_{k}\mathrm{Hom}_{k\widetilde{Q}}(N,\tau
M)=1,$ we have the following exact sequences
$$0\longrightarrow B_0\longrightarrow M\longrightarrow A_0\longrightarrow 0$$
$$0\longrightarrow C\longrightarrow \tau B_0\longrightarrow I\longrightarrow 0$$
where $C=\mathrm{im}f, \mathrm{ker}f=D_0.$

\begin{eqnarray}
   s_2&=& \frac{|\mathrm{Hom}_{k\widetilde{Q}}(N,\tau M)|-1}{q-1}X_{A_0\oplus D_0\oplus I[-1]}  \nonumber\\
   &=& \sum_{X,Y,K,L}\frac{|\mathrm{Hom}_{k\widetilde{Q}}(N,\tau M)|-1}{q-1}F^{D_0}_{XY}F^{A_0}_{KL}q^{[L,X]-\frac{1}{2}\langle
Y+L,K+X\rangle}X^{\widetilde{B}(\underline{y}+\underline{l}+\underline{b_0})-(\widetilde{I}-\widetilde{R})(\underline{m}+\underline{n})}\nonumber\\
  &=&\sum_{A,B,C,D} \frac{q^{[C,\tau B]}-1}{q-1}F^{M}_{AB}F^{N}_{CD}q^{[L,X]-\frac{1}{2}\langle
Y+L,K+X\rangle}X^{\widetilde{B}(\underline{y}+\underline{l}+\underline{b_0})-(\widetilde{I}-\widetilde{R})(\underline{m}+\underline{n})}.\nonumber
\end{eqnarray}
Here, $Y=D,K=A,B=B_0+L$ in the above expression and the equality can
be illustrated by the following diagram:

$$
\xymatrix{&Y\ar@{=}[r]\ar[d]&Y\ar[d]&\tau A\ar@{=}[r]&\tau A&\\
 0\ar[r]&D_0\ar[r]\ar[d]&N\ar[r]\ar[d]&\tau
M\ar[r]\ar[u]&\tau A_0\oplus I\ar[r]\ar[u]&0\\
0\ar[r]&X\ar[r]&C\ar[r]&\tau B\ar[r]\ar[u]&\tau L\oplus
I\ar[r]\ar[u]&0}
$$
We must to check the relation between
$$-\frac{1}{2}\langle Y+L,K+X\rangle+[L,X]$$ and
$$-\frac{1}{2}\langle B+D,A+C\rangle+\langle B,C\rangle.$$
In this case, note that $ D=Y, L=A_0-A,K=A,[L,X]^{1}=[X,\tau L]=0.$
We have
\begin{eqnarray}
  -\frac{1}{2}\langle Y+L,K+X\rangle+[L,X] &=& -\frac{1}{2}\langle Y+L,K+X\rangle+\langle L,X\rangle \nonumber\\
   &=& -\frac{1}{2}\langle D+A_0-A,A+D_0-D\rangle+\langle A-A_0,D_0-D\rangle\nonumber\\
   &=& -\frac{1}{2}\langle D,A\rangle-\frac{1}{2}\langle D,D_0\rangle+\frac{1}{2}\langle D,D\rangle-\frac{1}{2}\langle A_0,A\rangle
   +\frac{1}{2}\langle A_0,D_0\rangle\nonumber\\
   && -\frac{1}{2}\langle A_0,D\rangle+\frac{1}{2}\langle A,A\rangle
   -\frac{1}{2}\langle A,D_0\rangle+\frac{1}{2}\langle A,D\rangle. \nonumber\
\end{eqnarray}
And
\begin{eqnarray}
   && -\frac{1}{2}\langle B+D,A+C\rangle+\langle B,C\rangle \nonumber\\
   &=& -\frac{1}{2}\langle M-A+D,A+N-D\rangle+\langle M-A,N-D\rangle\nonumber\\
   &=& -\frac{1}{2}\langle M,A\rangle-\frac{1}{2}\langle M,D\rangle+\frac{1}{2}\langle A,A\rangle-\frac{1}{2}\langle A,N\rangle+\frac{1}{2}\langle A,D\rangle\nonumber\\
   && -\frac{1}{2}\langle D,A\rangle-\frac{1}{2}\langle D,N\rangle+\frac{1}{2}\langle D,D\rangle+\frac{1}{2}\langle M,N\rangle. \nonumber\
\end{eqnarray}
Hence it is equivalent to compare
$$-\frac{1}{2}\langle D,D_0\rangle-\frac{1}{2}\langle A_0,A\rangle+\frac{1}{2}\langle A_0,D_0\rangle-\frac{1}{2}\langle A_0,D\rangle-\frac{1}{2}\langle A,D_0\rangle$$
and
$$-\frac{1}{2}\langle D,N\rangle-\frac{1}{2}\langle M,A\rangle+\frac{1}{2}\langle M,N\rangle-\frac{1}{2}\langle M,D\rangle-\frac{1}{2}\langle A,N\rangle.$$
We claim that
$$\langle D,N\rangle+\langle M,D\rangle=\langle D,D_0\rangle+\langle A_0,D\rangle$$
and
$$\langle A_0,A\rangle+\langle A,D_0\rangle=\langle M,A\rangle+\langle A,N\rangle.$$
Indeed, we have
\begin{eqnarray}
  \langle D,N-D_0\rangle &=& \langle D,\tau M-\tau A_0-I\rangle \nonumber\\
   &=& \langle D,\tau M-\tau A_0\rangle=\langle A_0-M,D\rangle. \nonumber\
\end{eqnarray}
In the same way, we also have
$$
\langle A, N-D_0\rangle=\langle A_0-M, A\rangle.
$$

 Thus
$$s_2=q^{\frac{1}{2}\langle
A_0,D_0\rangle-\frac{1}{2}\langle
M,N\rangle}\sum_{A,B,C,D}\frac{q^{[B,C]^{1}}-1}{q-1}q^{\langle
B,C\rangle-\frac{1}{2}\langle
B+D,A+C\rangle}F^{M}_{AB}F^{N}_{CD}X^{\widetilde{B}(\underline{b}+\underline{d})-(\widetilde{I}-\widetilde{R})(\underline{m}+\underline{n})}.$$

Therefore we have the following multiplication formula
$$ X_{N}X_M=q^{\frac{1}{2}\Lambda((\widetilde{I}-\widetilde{R})\underline{n},
(\widetilde{I}-\widetilde{R})\underline{m})}X_E+q^{\frac{1}{2}\Lambda((\widetilde{I}-\widetilde{R})\underline{n},
(\widetilde{I}-\widetilde{R})\underline{m})+\frac{1}{2}\langle
M,N\rangle-\frac{1}{2}\langle A_0,D_0\rangle}X_{D_0\oplus A_0\oplus
I[-1]}.$$
\end{proof}

There are three canonical special cases satisfying the assumption
$\mathrm{Hom}_{k\widetilde{Q}}(D_0,\tau A\oplus
I)=\mathrm{Hom}_{k\widetilde{Q}}(A_0,I)=0$ in Theorem
\ref{multi-formula}.

\noindent\textbf{Special case I}. Assume that $A_0=0=I.$ Then
$L=K=0=A.$ If $B\neq M,$ i.e, $B\subsetneqq M,$ then there exists
$f_1: N\longrightarrow \tau M$ induced by the above diagram which is
not surjective. It is a contradiction to the assumption
$\mathrm{dim}_{k}\mathrm{Hom}_{k\widetilde{Q}}(N,\tau M)=1.$ In this
case, the multiplication formula is
$$ X_{N}X_M=q^{\frac{1}{2}\Lambda((\widetilde{I}-\widetilde{R})\underline{n},
(\widetilde{I}-\widetilde{R})\underline{m})}X_E+q^{\frac{1}{2}\Lambda((\widetilde{I}-\widetilde{R})\underline{n},
(\widetilde{I}-\widetilde{R})\underline{m})+\frac{1}{2}\langle
M,N\rangle}X_{D_0}.$$

\noindent\textbf{Special case II}. Assume that $D_0=0$ and
$\mathrm{Hom}_{k\widetilde{Q}}(A_0,I)=0.$ Then $Y=X=0, C=N.$ In this
case, the multiplication formula is
$$ X_{N}X_M=q^{\frac{1}{2}\Lambda((\widetilde{I}-\widetilde{R})\underline{n},
(\widetilde{I}-\widetilde{R})\underline{m})}X_E+q^{\frac{1}{2}\Lambda((\widetilde{I}-\widetilde{R})\underline{n},
(\widetilde{I}-\widetilde{R})\underline{m})+\frac{1}{2}\langle
M,N\rangle}X_{A_0\oplus I[-1]}.$$

\noindent\textbf{Special case III}. Assume that $M,N$ are
indecomposable rigid $kQ$-mod and
$$\mathrm{dim}_k\mathrm{Ext}^1_{\mathcal{C}_{\widetilde{Q}}}(M, N)=1.$$
Since $D_0\oplus A_0\oplus I[-1]$ is rigid, then the assumption
$\mathrm{Hom}_{k\widetilde{Q}}(D_0,\tau A\oplus
I)=\mathrm{Hom}_{k\widetilde{Q}}(A_0,I)=0$ in Theorem
\ref{multi-formula} holds.
\begin{lemma}\label{special}
With the assumption in Special case III, we have $\frac{1}{2}\langle
A_0,D_0\rangle-\frac{1}{2}\langle M,N\rangle=\frac{1}{2}.$
\end{lemma}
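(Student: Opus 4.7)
The plan is to reduce the claim $\frac{1}{2}\langle A_0,D_0\rangle-\frac{1}{2}\langle M,N\rangle=\frac{1}{2}$ to the Hom-dimension equality $[A_0,D_0]=[M,N]$. This reduction uses two inputs automatically available in Special case III: the rigidity of $E'=D_0\oplus A_0\oplus I[-1]$ in $\mathcal{C}_{\widetilde{Q}}$ forces $\Ext^1(A_0,D_0)=0$, so $\langle A_0,D_0\rangle=[A_0,D_0]$; and the defining hypothesis $\dim\Ext^1(M,N)=1$ gives $\langle M,N\rangle=[M,N]-1$. Together these reduce the target identity to $[A_0,D_0]-[M,N]=0$.

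To establish $[A_0,D_0]=[M,N]$, I would split the argument into two steps. Step one shows $[A_0,D_0]=[A_0,N]$ by applying $\Hom(A_0,-)$ to $0\to D_0\to N\to C\to 0$ (using $\Ext^1(A_0,D_0)=0$), which reduces to showing $[A_0,C]=0$. The latter follows from applying $\Hom(A_0,-)$ to $0\to C\to \tau B_0\to I\to 0$ together with the vanishings $[A_0,I]=0$ (hypothesis), $\Ext^1(A_0,I)=0$ (since $I$ is injective), and $[A_0,\tau B_0]=D\Ext^1(B_0,A_0)=0$ by AR duality, the last of which uses $\Ext^1(B_0,A_0)=0$ deduced from rigidity of $M$ by chasing the long exact sequences on $0\to B_0\to M\to A_0\to 0$.

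Step two shows $[A_0,N]=[M,N]$ by applying $\Hom(-,N)$ to $0\to B_0\to M\to A_0\to 0$. The resulting long exact sequence makes this equivalent to the injectivity of the connecting map $[B_0,N]\to \Ext^1(A_0,N)$. Via AR duality $\Ext^1(A_0,N)\cong D\Hom(N,\tau A_0)$, combined with $\Hom(D_0,\tau A_0)=0$ (from Theorem \ref{multi-formula}'s hypothesis, or equivalently from $\Ext^1_{\mathcal{C}_{\widetilde{Q}}}(A_0,D_0)=0$), the target identifies with $D\Hom(C,\tau A_0)$. A dimension count against the cokernel sequence $0\to C\to \tau M\to \tau A_0\oplus I\to 0$, using the remaining rigidity conditions on $E'$ (in particular $\Ext^1(A_0,A_0)=0$ and $\Hom(A_0,\tau D_0)=0$) and the one-dimensionality of $\Ext^1(M,N)$, yields the required equality of dimensions.

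The main obstacle is the dimension count in Step 2, which needs a careful tracking of several interlocking Hom/Ext spaces via AR duality. A subtle technicality throughout is the treatment of the projective summand $P_0$ in the decomposition $M=M'\oplus P_0$, $A_0=A\oplus P_0$: since $\tau P_0=0$, the AR-duality identity $\langle X,\tau Y\rangle=-\langle Y,X\rangle$ applies only to the non-projective part, and the contribution of $P_0$ must be handled separately using that $\Hom(P_0,-)=(-)_{\mathrm{top}\,P_0}$ is exact and $\Ext^1(P_0,-)=0$.
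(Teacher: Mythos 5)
Your reduction and your Step 1 are sound and in fact run parallel to the paper's own argument: the paper likewise splits the claim into the two facts $\langle A_0,N/D_0\rangle=0$ (your $[A_0,C]=0$, giving $[A_0,D_0]=[A_0,N]$) and $\langle M,N\rangle=\langle A_0,N\rangle-1$ (your $[M,N]=[A_0,N]$), and your derivation of $[A_0,C]=0$ via $\Ext^1(B_0,A_0)=0$ and the embedding $C\hookrightarrow\tau B_0$ is a correct minor variant of the paper's embedding $N/D_0\hookrightarrow\tau M$ together with $[A_0,\tau M]=0$.

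The genuine gap is Step 2. What you must prove there is exactly the injectivity of the connecting map $\Hom(B_0,N)\to\Ext^1(A_0,N)$, equivalently that $\Hom(M,N)\to\Hom(B_0,N)$ vanishes, and the ``dimension count'' you describe cannot deliver this: counting dimensions along the long exact sequence of $0\to B_0\to M\to A_0\to 0$ only reproduces the tautology $\langle A_0,N\rangle+\langle B_0,N\rangle=\langle M,N\rangle$, and identifying the codomain $\Ext^1(A_0,N)$ with $D\Hom(C,\tau A_0)$ says nothing about the kernel of the connecting map. Moreover, the ingredients you list ($\Ext^1(A_0,A_0)=0$, $\Hom(A_0,\tau D_0)=0$, $\Hom(D_0,\tau A_0)=0$, $\dim_k\Ext^1(M,N)=1$) do not include the one input that actually makes $[M,N]=[A_0,N]$ true, namely the rigidity of $N$ itself. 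In the paper this enters by applying $\Hom(N,-)$ to $0\to N/D_0\to\tau M\to\tau A_0\oplus I\to 0$: $\Ext^1(N,N)=0$ forces $\Ext^1(N,N/D_0)=0$, whence $\Ext^1(N,\tau M)\cong\Ext^1(N,\tau A_0\oplus I)$, and AR duality converts this into $[M,N]=[A_0,N]$ (equivalently: for any $h\colon M\to N$, the composite $\tau h\circ f$ lies in $\Hom(N,\tau N)\cong D\Ext^1(N,N)=0$, so $\tau h$ kills $C$ and $h$ factors through $A_0$ up to projectives). Without invoking $\Ext^1(N,N)=0$ in some such structural way, your Step 2 does not close; and even with it, the passage from stable to ordinary Hom (the projective summand $P_0$ issue you flag) still has to be carried out rather than only acknowledged.
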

\begin{proof}
Note that we have $$\frac{1}{2}\langle
A_0,D_0\rangle-\frac{1}{2}\langle M,N\rangle=\frac{1}{2}\langle
A_0,N-N/D_0\rangle-\frac{1}{2}\langle M,N\rangle.$$ We need to
confirm the two equations
\begin{enumerate}
  \item $\langle
M,N\rangle=\langle A_0,N\rangle-1$ and
  \item $\langle
A_0,N/D_0\rangle=0.$
\end{enumerate}
Note that $A_0\oplus N$ is rigid, thus $[A_0,N]^{1}=0.$ We have the
following exact sequences
$$0\longrightarrow N/D_0\longrightarrow \tau M\longrightarrow \tau A_0 \oplus I\longrightarrow 0$$
$$0\longrightarrow D_0\longrightarrow N\longrightarrow N/D_0\longrightarrow 0$$
Applying the functor $\mathrm{Hom}_{k\widetilde{Q}}(N,-)$, we have
the following exact sequences
$$[N,N/D_0]^{1}\longrightarrow [N,\tau M]^{1}\longrightarrow [N,\tau A_0\oplus I]^{1}\longrightarrow 0$$
$$[N,N]^{1}\longrightarrow [N,N/D_0]^{1}\longrightarrow 0.$$
Hence
$$\langle
M,N\rangle=[M,N]-1=[A_0,N]-1=\langle A_0,N\rangle-1.$$ As for the
second equation, apply the functor
$\mathrm{Hom}_{k\widetilde{Q}}(A_0,-)$ to the exact sequence
$$0\longrightarrow D_0\longrightarrow N\longrightarrow N/D_0\longrightarrow 0$$
We have the following exact sequence
$$[A_0,N]^{1}\longrightarrow [A_0,N/D_0]^{1}\longrightarrow 0$$
Thus $[A_0,N/D_0]^{1}=0.$ Applying the functor
$\mathrm{Hom}_{k\widetilde{Q}}(\tau M,-)$ to the exact sequence
$$0\longrightarrow N/D_0\longrightarrow \tau M\longrightarrow \tau A_0 \oplus I\longrightarrow 0$$
We have the following exact sequence
$$[\tau M,\tau M]^{1}\longrightarrow [\tau M,\tau A_0\oplus I]^{1}\longrightarrow 0$$
Thus we have $$[M,A_0]^{1}=[A_0,\tau M]=0.$$ Again applying the
functor $\mathrm{Hom}_{k\widetilde{Q}}(A_0,-)$, we have the exact
sequence
$$0\longrightarrow [A_0,N/D_0]\longrightarrow [A_0,\tau M]=0$$
Hence $[A_0,N/D_0]=0.$
\end{proof}

By Lemma \ref{special}, we obtain the following multiplication
theorem between quantum cluster variables in \cite{fanqin}.
\begin{corollary}
Let $M$ and $N$ be indecomposable rigid $kQ$-modules and
$\mathrm{dim}_k\mathrm{Ext}^1_{\mathcal{C}_{\widetilde{Q}}}(M,
N)=1.$ Let
$$N\longrightarrow E\longrightarrow M\longrightarrow N[1]=\tau N$$
and
$$M\longrightarrow D_{0}\oplus A_{0}\oplus I[-1]\longrightarrow N\longrightarrow \tau M$$
be two non-split triangles in $\mathcal{C}_{\widetilde{Q}}$ as
above. Then we have
$$ X_{N}X_M=q^{\frac{1}{2}\Lambda((\widetilde{I}-\widetilde{R})\underline{n},
(\widetilde{I}-\widetilde{R})\underline{m})}X_E+q^{\frac{1}{2}\Lambda((\widetilde{I}-\widetilde{R})\underline{n},
(\widetilde{I}-\widetilde{R})\underline{m})-\frac{1}{2}}X_{D_0\oplus
A_0\oplus I[-1]}.$$
\end{corollary}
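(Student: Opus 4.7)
The plan is to deduce this corollary directly by combining the general multiplication formula of Theorem \ref{multi-formula} with Lemma \ref{special} (the precise computation of the coefficient in the rigid situation); all the real work has already been done.

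First I would check that the standing hypothesis of Theorem \ref{multi-formula}, namely $\mathrm{Hom}_{k\widetilde{Q}}(D_0,\tau A_0\oplus I)=\mathrm{Hom}_{k\widetilde{Q}}(A_0,I)=0$, is automatic under the assumptions here. This is exactly the content of Special case III preceding Lemma \ref{special}: the object $D_0\oplus A_0\oplus I[-1]$ arises as the middle term of the second non-split triangle, and under the hypothesis that $M$ and $N$ are indecomposable rigid with $\dim_{k}\mathrm{Ext}^{1}_{\mathcal{C}_{\widetilde{Q}}}(M,N)=1$, the middle terms of both exchange triangles in $\mathcal{C}_{\widetilde{Q}}$ are rigid. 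Rigidity of $D_0\oplus A_0\oplus I[-1]$ in $\mathcal{C}_{\widetilde{Q}}$ immediately forces the required vanishings of the two $\mathrm{Hom}$ groups (translating self-$\mathrm{Ext}^{1}$ vanishing in the cluster category back to module-category $\mathrm{Hom}$ statements via the functor $\tau\circ[-1]$).

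Having verified the hypothesis, Theorem \ref{multi-formula} gives
\[
X_{N}X_M=q^{\frac{1}{2}\Lambda((\widetilde{I}-\widetilde{R})\underline{n},(\widetilde{I}-\widetilde{R})\underline{m})}X_E+q^{\frac{1}{2}\Lambda((\widetilde{I}-\widetilde{R})\underline{n},(\widetilde{I}-\widetilde{R})\underline{m})+\frac{1}{2}\langle M,N\rangle-\frac{1}{2}\langle A_0,D_0\rangle}X_{D_0\oplus A_0\oplus I[-1]}.
\]
The only thing separating this from the desired statement is the identity
\[
\tfrac{1}{2}\langle M,N\rangle-\tfrac{1}{2}\langle A_0,D_0\rangle=-\tfrac{1}{2},
\]
and this is exactly what Lemma \ref{special} provides (written in the equivalent form $\frac{1}{2}\langle A_0,D_0\rangle-\frac{1}{2}\langle M,N\rangle=\frac{1}{2}$). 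Substituting then yields the stated formula.

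There is no real obstacle left; the entire novelty lay in Theorem \ref{multi-formula} (the non-rigid multiplication formula) and in Lemma \ref{special} (the numerical check using rigidity of $A_0\oplus N$, the vanishing $[A_0,N/D_0]^{1}=0$, and the two exact sequences coming from the kernel-image factorization of the canonical map $N\to\tau M$). The corollary itself is just the composition of these two results, so the proof is a one-line substitution after invoking Special case III to satisfy the hypothesis.
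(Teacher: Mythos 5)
Your proposal is correct and matches the paper's own argument: the paper's Special case III verifies the hypothesis $\mathrm{Hom}_{k\widetilde{Q}}(D_0,\tau A_0\oplus I)=\mathrm{Hom}_{k\widetilde{Q}}(A_0,I)=0$ via rigidity of $D_0\oplus A_0\oplus I[-1]$, and the corollary is then obtained exactly as you describe, by substituting the identity of Lemma \ref{special} into the exponent produced by Theorem \ref{multi-formula}.
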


Now let $M$ be a $kQ$-module and $P$ be a projective
$k\widetilde{Q}$-module with $[P,M]=[M,I]=1,$ where $I=\nu(P)$, here
$\nu=\mathrm{DHom}_{k\widetilde{Q}}(-,k\widetilde{Q})$ is the
Nakayama functor. It is well-known that $I$ is an injective module
with $\mathrm{soc}I=P/\mathrm{rad}P.$ Fix two nonzero morphisms
$f\in \mathrm{Hom}_{k\widetilde{Q}}(P, M)$ and $g\in
\mathrm{Hom}_{k\widetilde{Q}}(M, I).$ The two morphisms induce  the
following exact sequences
$$\xymatrix{0\ar[r]& P'\ar[r]& P\ar[r]^{f}& M\ar[r]& A\ar[r]& 0}$$
and
$$\xymatrix{0\ar[r]& B\ar[r]& M\ar[r]^{g}& I\ar[r]& I'\ar[r]& 0}.$$
These correspond to two non-split triangles in
$\mathcal{C}_{\widetilde{Q}}$
$$M\longrightarrow E'\longrightarrow P[1]\longrightarrow M[1]$$
and
$$I[-1]\longrightarrow E\longrightarrow M\longrightarrow I,$$
respectively, where $E\simeq B\oplus I'[-1]$ and $E'\simeq A\oplus
P'[1].$

Now we state the second part of our multiplication theorem for
acyclic quantum cluster algebras.
\begin{theorem}\label{exchange2}
With the above notations, assume that $[B,I']=[P',A]=0.$ Then we
have
$$X_{\tau P}X_{M}=q^{\frac{1}{2}\Lambda(\underline{\mathrm{dim}}P/rad P,
-(\widetilde{I}-\widetilde{R})\underline{m})}X_E+q^{\frac{1}{2}\Lambda(\underline{\mathrm{dim}}P/rad
P,
-(\widetilde{I}-\widetilde{R})\underline{m})-\frac{1}{2}}X_{E'}.$$
\end{theorem}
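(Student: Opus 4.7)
The plan is to adapt the blueprint of the proof of Theorem \ref{multi-formula} to the case in which one factor is a shifted projective object, exploiting the simplification that $X_{\tau P} = X^{\underline{\mathrm{dim}}P/\mathrm{rad}P}$ is a \emph{monomial} in the based quantum torus (rather than a sum over submodules of some module). Writing $\underline{p} := \underline{\mathrm{dim}}P/\mathrm{rad}P$ and expanding the left side via the definition of $X_M$ together with the commutation rule $X^{\alpha}X^{\beta} = q^{\frac{1}{2}\Lambda(\alpha,\beta)}X^{\alpha+\beta}$, one obtains
$$X_{\tau P}X_M = \sum_{\underline{e}} |\mathrm{Gr}_{\underline{e}}M|\; q^{-\frac{1}{2}\langle\underline{e},\underline{m}-\underline{e}\rangle + \frac{1}{2}\Lambda(\underline{p},\,\widetilde{B}\underline{e}-(\widetilde{I}-\widetilde{R})\underline{m})}\, X^{\widetilde{B}\underline{e}-(\widetilde{I}-\widetilde{R})\underline{m}+\underline{p}},$$
which I want to match with the analogous expansions of $X_E = X_{B \oplus I'[-1]}$ and $X_{E'} = X_{A \oplus P'[1]}$ coming from the definition of the quantum Caldero--Chapoton map (using $X_{I'[-1]} = X^{\underline{\mathrm{dim}}\,\mathrm{soc}\,I'}$ and $X_{P'[1]} = X^{\underline{\mathrm{dim}}P'/\mathrm{rad}P'}$ to handle the shifted-projective summands).

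The central combinatorial step is to split the sum over $V \subseteq M$ into two parts according to a dichotomy analogous to the one used in Theorem \ref{multi-formula}: one part indexed by submodules $V$ with $\mathrm{im}\,f \subseteq V$, which are in bijection with submodules of $A = \mathrm{coker}\,f$ via the canonical projection $M \twoheadrightarrow A$ (and which should contribute the $X_{E'}$-piece), and a complementary part indexed by submodules $V \subseteq B = \ker g$, which are in bijection with submodules of $B$ (and which should contribute the $X_E$-piece). The hypotheses $[B, I'] = [P', A] = 0$ are precisely what is needed to make this partition exhaustive and disjoint and to annihilate all potential cross-terms, leaving only the two triangle contributions corresponding to $I[-1] \to E \to M$ and $M \to E' \to P[1]$ in $\mathcal C_{\widetilde{Q}}$. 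Once the submodule correspondence is in hand, Lemma \ref{1}, Corollary \ref{2}, and the Euler-form identities forced by the four-term sequences
$$0 \to P' \to P \to M \to A \to 0, \qquad 0 \to B \to M \to I \to I' \to 0$$
let one convert all $\Lambda$-pairings into Euler forms and verify the advertised prefactors.

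The extra factor $q^{-1/2}$ in the coefficient of $X_{E'}$ is expected to emerge from the dimension count $[P,M] = 1$ (equivalently $\dim_{k}\mathrm{Ext}^{1}_{\mathcal{C}_{\widetilde{Q}}}(M, P[1]) = 1$), in exact parallel with the role of $[M, N]^1 = 1$ and the identity isolated in Lemma \ref{special}. The main obstacle is the combinatorial step: verifying carefully that the vanishing conditions $[B, I'] = [P', A] = 0$ indeed collapse all mixed contributions, so that the natural map from $\mathrm{Gr}(M)$ to the disjoint union $\mathrm{Gr}(A) \sqcup \mathrm{Gr}(B)$ is, with the correct $q$-weighting, the bijection driving the identity. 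Once this is established, the remaining bookkeeping, while tedious, is essentially dictated by the two four-term sequences and the formulas of Lemma \ref{1} and Corollary \ref{2}, and should go through as a routine computation.
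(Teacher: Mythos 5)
Your outline does follow the same route as the paper: expand $X_{\tau P}X_M$ using that $X_{\tau P}=X^{\underline{\mathrm{dim}}P/\mathrm{rad}P}$ is a torus monomial, split $\mathrm{Gr}(M)$ into submodules $V\subseteq B=\ker g$ and submodules with $\mathrm{im}f\subseteq V$, identify the two halves with $X_E$ and $q^{-1/2}X_{E'}$, and clean up exponents with Lemma \ref{1}. However, there is a genuine gap exactly at the step you defer as the ``main obstacle'', and the mechanism you propose for it would not work: the exhaustiveness and disjointness of the partition is \emph{not} what the hypotheses $[B,I']=[P',A]=0$ provide. It comes instead from $[P,M]=[M,I]=1$ together with the compatibility condition. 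Concretely, $\Lambda(\underline{\mathrm{dim}}P/\mathrm{rad}P,\widetilde{B}\underline{e})=-[P,V]$ for a submodule $V\subseteq M$ with $\underline{\mathrm{dim}}V=\underline{e}$, so each $V$ enters your expansion with an extra factor $q^{-\frac{1}{2}[P,V]}$, and $[P,V]\leq[P,M]=1$. Since $P$ is projective and $I=\nu P$, one has $\mathrm{Hom}(P,V)\cong D\,\mathrm{Hom}(V,I)$; hence $[P,V]=0$ forces $g|_V=0$, i.e.\ $V\subseteq B$, while $[P,V]=1$ forces $f\in\mathrm{Hom}(P,V)$, i.e.\ $\mathrm{im}f\subseteq V$. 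The two cases cannot overlap because the composition pairing $\mathrm{Hom}(P,M)\times\mathrm{Hom}(M,\nu P)\rightarrow D\,\mathrm{End}(P)$ is nondegenerate, so $g\circ f\neq 0$. This same identity $\Lambda(\underline{\mathrm{dim}}P/\mathrm{rad}P,\widetilde{B}\underline{e})=-[P,V]$ is the actual source of the $q^{-1/2}$ on the $X_{E'}$ term; it is not obtained as an analogue of Lemma \ref{special}, which concerns the rigid case of Theorem \ref{multi-formula}.

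What the hypotheses $[B,I']=0$ and $[P',A]=0$ actually buy is that matched terms carry identical $q$-weights (there are no ``cross-terms'' to annihilate). For $V\subseteq B$ one needs $\langle V,B/V\rangle=\langle V,M/V\rangle$, i.e.\ $\langle V,\mathrm{im}\,g\rangle=0$, which follows from $[V,I]=[V,I']=0$, the latter using $[B,I']=0$ and injectivity of $I'$; for $\mathrm{im}f\subseteq V$ with $Y=V/\mathrm{im}f\subseteq A$ one needs $\langle Y,A/Y\rangle=\langle V,M/V\rangle$, which reduces (since $[P,M/V]=0$ by projectivity of $P$ and $[P,M]=1$) to $\langle P',M/V\rangle=0$, coming from $[P',A]=0$. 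Finally, one must check that the torus exponents also agree, e.g.\ that $(\widetilde{I}-\widetilde{R})(\underline{m}-\underline{b})=\underline{\mathrm{dim}}P/\mathrm{rad}P-\underline{\mathrm{dim}}\,\mathrm{soc}\,I'$; this is where the four-term sequences genuinely enter, and it is routine as you say. With the partition argument supplied as above your plan coincides with the paper's proof; as written, attributing the partition to $[B,I']=[P',A]=0$ leaves the central step unjustified.
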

\begin{proof}
\begin{eqnarray}
   && X_{\tau P}X_{M}  \nonumber\\
   &=& X^{\underline{\mathrm{dim}} P/rad P}\sum_{G,H}q^{-\frac{1}{2}\langle H,G\rangle}F^{M}_{GH}X^{\widetilde{B}\underline{h}-(\widetilde{I}-\widetilde{R})\underline{m}}
  \nonumber\\
  &=& \sum_{G,H}q^{-\frac{1}{2}\langle H,G\rangle}F^{M}_{GH}q^{\frac{1}{2}\Lambda(\underline{\mathrm{dim}} P/\mathrm{rad}P,\widetilde{B}\underline{h}-(\widetilde{I}-\widetilde{R})
  \underline{m})}X^{\widetilde{B}\underline{h}-(\widetilde{I}-\widetilde{R})\underline{m}+\underline{\mathrm{dim}}
P/rad P}
\nonumber\\
 &=&q^{\frac{1}{2}\Lambda(\underline{\mathrm{dim}} P/\mathrm{rad}P,-(\widetilde{I}-\widetilde{R})\underline{m})}\sum_{G,H}q^{-\frac{1}{2}\langle
H,G\rangle}q^{-\frac{1}{2}[P,H]}F^{M}_{GH}X^{\widetilde{B}\underline{h}-(\widetilde{I}-\widetilde{R})\underline{m}+\underline{\mathrm{\mathrm{dim}}}
P/\mathrm{rad}P}.\nonumber
\end{eqnarray}
Here we use the following fact
$$\Lambda(\underline{\mathrm{dim}} P/\mathrm{rad}P,\widetilde{B}\underline{h})=(\underline{\mathrm{dim}} P/\mathrm{rad}P)^{tr}\Lambda\widetilde{B}\underline{h}=-
(\underline{\mathrm{dim}} P/\mathrm{rad}P)^{tr}\begin{bmatrix}I_n\\0
\end{bmatrix}\underline{h}=-[P,H].$$ Note that by assumption
$[P,M]=1,$ we have that $[P,H]=0\ or \ 1.$

 We firstly compute the  term $$X_E=X_{B\oplus
I'[-1]}=\sum_{X,Y}q^{-\frac{1}{2}\langle
Y,X\rangle}F^{B}_{XY}X^{\widetilde{B}\underline{y}-(\widetilde{I}-\widetilde{R})\underline{b}+\underline{\mathrm{dim}}
\mathrm{soc}I'}.$$  We have the following diagram
$$
\xymatrix{&0\ar[d]&0\ar[d]\\
&Y\ar@{=}[r]\ar[d]&Y\ar[d]\\
 0\ar[r]&B\ar[r]\ar[d]&M\ar[r]^{\theta}\ar[d]&I\ar[r]&I'\ar[r]&0\\
0\ar[r]&X\ar[r]\ar[d]&G\ar[d]\\
&0&0}
$$
and a short exact sequence
$$0\longrightarrow \mathrm{im}\theta \longrightarrow I\longrightarrow I'\longrightarrow 0.$$
As we assume that $[B,I']=0,$ thus $[H,I']=0.$ Then
\begin{eqnarray}
   && \langle Y,X\rangle-\langle H,G\rangle = \langle H,X\rangle-\langle H,G\rangle =\langle H,X-G\rangle =\langle H,B-M\rangle=-\langle H, \mathrm{im}\theta\rangle.\nonumber
\end{eqnarray}
Applying the functor $[H,-]$ to the above short exact sequence, we
have
$$0\longrightarrow [H, \mathrm{im}\theta]\longrightarrow [H, I]\longrightarrow [H, I']\longrightarrow [H, \mathrm{im}\theta]^{1}\longrightarrow 0$$
Note that $[H, I]=[H, I']=0,$ thus $\langle H,
\mathrm{im}\theta\rangle=0.$ Hence
$$X_E=\sum_{G,H,[P,H]=0}q^{-\frac{1}{2}\langle
H,G\rangle}F^{M}_{GH}X^{\widetilde{B}\underline{h}-(\widetilde{I}-\widetilde{R})\underline{m}+\underline{\mathrm{dim}}
P/\mathrm{rad}P}.$$

 Now compute the  term $$X_{E'}=X_{A\oplus
P'[1]}=\sum_{X,Y}q^{-\frac{1}{2}\langle
Y,X\rangle}F^{A}_{XY}X^{\widetilde{B}\underline{y}-(\widetilde{I}-\widetilde{R})\underline{a}+\underline{\mathrm{dim}}
P'/\mathrm{rad}P'}.$$  We have the following diagram
$$
\xymatrix{&&&0\ar[d]&0\ar[d]\\
&&P\ar[r]\ar@{=}[d]&H\ar[r]\ar[d]&Y\ar[r]\ar[d]&0\\
 0\ar[r]&P'\ar[r]&P\ar[r]^{\theta'}&M\ar[r]\ar[d]&A\ar[r]\ar[d]&0\\
&&&G\ar@{=}[r]\ar[d]&X\ar[d]\\
&&&0&0}
$$
Applying the functor $[P',-]$ to the following short exact sequence
$$0\longrightarrow Y \longrightarrow A\longrightarrow G\longrightarrow 0.$$
 we
have
$$0\longrightarrow [P',Y]\longrightarrow [P', A]\longrightarrow [P', G]\longrightarrow 0$$
As we assume that $[P',A]=0,$ thus $[P',G]=0.$ Then $\langle
P',G\rangle=0.$ Hence we have
\begin{eqnarray}
   && \langle Y,X\rangle-\langle H,G\rangle = \langle Y,G\rangle-\langle H,G\rangle =\langle Y-H,G\rangle =\langle
   A-M,G\rangle\nonumber\\
   &=& \langle P'-P,G\rangle=\langle P',G\rangle=0.\nonumber
\end{eqnarray}
Therefore $$X_E'=\sum_{G,H,[P,H]=1}q^{-\frac{1}{2}\langle
H,G\rangle}F^{M}_{GH}X^{\widetilde{B}\underline{h}-(\widetilde{I}-\widetilde{R})\underline{m}+\underline{\mathrm{dim}}
P/\mathrm{rad}P}.$$ This completes the proof.
\end{proof}

Note that if $M$ is indecomposable rigid object in
$\mathcal{C}_{\widetilde{Q}}$ and $[P,M]=[M,I]=1$, then both
$E=B\oplus I'[-1]$ and $E'=A\oplus P'[1]$ are rigid. Thus the
assumptions
$\mathrm{Hom}_{k\widetilde{Q}}(B,I')=\mathrm{Hom}_{k\widetilde{Q}}(P',A)=0$
in Theorem \ref{exchange2} naturally hold. The quantum cluster
multiplication theorem in \cite{fanqin} deals with this special
case.

\section{Generic bases in  specialized quantum cluster algebras of finite type}
Let $k$ be a finite field with cardinality $|k|=q$ and $m\geq n$ be
two positive integers and $\widetilde{Q}$ an acyclic quiver with
vertex set $\{1,\ldots,m\}$. The full subquiver $Q$ on the vertices
$\{1,\ldots,n\}$ is  the principal part of $\widetilde{Q}$.
 Let $\mathcal{A}_{|k|}(\widetilde{Q})$ be the corresponding specialized
quantum cluster algebra of $Q$ with coefficients. Then the main
theorem in \cite{fanqin} shows that
$\mathcal{A}_{|k|}(\widetilde{Q})$ is the $\mathbb{ZP}$-subalgebra
of $\Fcal$ generated by
$$\{X_M| M\ \text{is  indecomposable rigid $kQ$-mod}\}\cup$$$$
\{X_{\tau P_i},1\leq i\leq n|P_i\ \text{is  indecomposable
projective $k\widetilde{Q}$-mod}\}.$$ Let $i$ be a sink or a source
in $\widetilde{Q}$. We define the reflected quiver
$\sigma_i(\widetilde{Q})$ by reversing all the arrows ending at $i$.
An \emph{admissible sequence of sinks (resp. sources)} is a sequence
$(i_1, \ldots, i_l)$ such that $i_1$ is a sink (resp. source) in
$\widetilde{Q}$ and $i_k$ is a sink (resp source) in
$\sigma_{i_{k-1}}\cdots \sigma_{i_1}(\widetilde{Q})$ for any $k=2,
\ldots, l$. A quiver $\widetilde{Q}'$ is called
\emph{reflection-equivalent}\index{reflection-equivalent} to
$\widetilde{Q}$ if there exists an admissible sequence of sinks or
sources $(i_1, \ldots, i_l)$ such that
$\widetilde{Q}'=\sigma_{i_{l}}\cdots \sigma_{i_1}(\widetilde{Q})$. A
quiver $\widetilde{Q}'$ is called \emph{reachable} from
$\widetilde{Q}$ if $\widetilde{Q}'=\sigma_{i_{l}}\cdots
\sigma_{i_1}(\widetilde{Q})$ where $1\leq i_1,\cdots,i_{l}\leq n$.
Note that mutations can be viewed as generalizations of reflections,
i.e, if $i$ is a sink or a source in a quiver $\widetilde{Q}$,
 then $\mu_i(\widetilde{Q})=\sigma_i(\widetilde{Q})$ where $\mu_i$ denotes the mutation in the direction
 $i$. Thus if $\widetilde{Q}'$ is \emph{reachable} from $\widetilde{Q}$,
 there is a natural canonical isomorphism between
$\mathcal{A}_{|k|}(\widetilde{Q})$ and
$\mathcal{A}_{|k|}(\widetilde{Q}'),$ denoted by
$$\Phi_{i}: \mathcal{A}_{|k|}(\widetilde{Q})\rightarrow
\mathcal{A}_{|k|}(\widetilde{Q}').$$

Let $\Sigma_i^+:\ \mathrm{mod}(\widetilde{Q}) \longrightarrow \
\mathrm{mod}(\widetilde{Q}')$ be the standard BGP-reflection functor
and $R_i^+:\mathcal C_{\widetilde{Q}} \longrightarrow \mathcal
C_{\widetilde{Q}'}$ be the
        extended BGP-reflection functor defined by \cite{Zhu}:
        $$R_i^+:\left\{\begin{array}{rcll}
            X & \mapsto & \Sigma_i^+(X), & \textrm{ if }X \not \simeq S_i \textrm{ is a $kQ$-module,}\\
            S_i & \mapsto & P_i[1], \\
            P_j[1] & \mapsto & P_j[1], & \textrm{ if }j \neq i,\\
            P_i[1] & \mapsto & S_i.
        \end{array}\right.$$
    By Rupel \cite{rupel}, the following holds:
\begin{theorem}\cite[Theorem 2.4, Lemma 5.6]{rupel}\label{ref}
For any $ X_M^{\widetilde{Q}}\in\mathcal{A}_{|k|}(\widetilde{Q})$,
we have $\Phi_{i}(X_M^{\widetilde{Q}})=X_{R_i^+M}^{\widetilde{Q}'}.$
\end{theorem}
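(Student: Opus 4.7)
The plan is to reduce to the case where $i$ is a sink in $\widetilde{Q}$ (the source case is analogous via dual arguments) and to verify the identity first on indecomposable objects of $\mathcal{C}_{\widetilde{Q}}$, then extend by multiplicativity using Theorem \ref{hall multi}.

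First I would check the identity on the ``frame'' objects $P_j[1]$ for $j \neq i$, on $P_i[1]$, and on $S_i$. When $j \neq i$, $X_{P_j[1]}^{\widetilde{Q}} = X^{e_j}$ is the generator $X_j$, which is unchanged by the mutation $\Phi_i$ at $i$; since $R_i^+(P_j[1]) = P_j[1]$, the right side in $\widetilde{Q}'$ is also $X^{e_j}$, so both sides agree. For $M = P_i[1]$ one has $X_{P_i[1]}^{\widetilde{Q}} = X^{e_i} = X_i$, so $\Phi_i(X_i) = X_i'$ must equal $X_{R_i^+ P_i[1]}^{\widetilde{Q}'} = X_{S_i}^{\widetilde{Q}'}$; since $i$ is a source in $\widetilde{Q}'$, $S_i$ is projective there and the quantum Caldero-Chapoton formula for $X_{S_i}^{\widetilde{Q}'}$ reduces to the sum of the two monomials of the exchange relation at $i$, agreeing with the mutated generator. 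The case $M = S_i$ is symmetric: $X_{S_i}^{\widetilde{Q}}$ already equals $X_i'$ by that exchange relation, and its image under $\Phi_i$ is $X_i = X^{e_i} = X_{P_i[1]}^{\widetilde{Q}'} = X_{R_i^+ S_i}^{\widetilde{Q}'}$.

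Next, for an indecomposable $kQ$-module $M \not\simeq S_i$, I would work directly with the quantum Caldero-Chapoton formula. Since $i$ is a sink, the functor $\Sigma_i^+$ induces a bijection between submodules $V$ of $M$ and submodules $V'$ of $\Sigma_i^+ M$, under which $\underline{\dim}\,V'$ is obtained from $\underline{\dim}\,V$ by the simple reflection $\sigma_i$, corrected by the multiplicity with which $S_i$ appears as a summand of $M/V$. The Euler forms of $\widetilde{Q}$ and $\widetilde{Q}'$ transform by $\sigma_i$-conjugation on the relevant dimension vectors, and the compatible pair by $(\widetilde{B},\Lambda)\mapsto(\mu_i \widetilde{B},\mu_i\Lambda)$.

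The bulk of the work is then a bookkeeping computation: I would split the Grassmannian sum defining $X_{\Sigma_i^+ M}^{\widetilde{Q}'}$ according to the $S_i$-content of the submodule, mirroring the two terms of the exchange relation at $i$, and then use Lemma \ref{1}, Corollary \ref{2}, and the mutation formula \eqref{eq:cl_exp} to substitute $X_i'$ for $X_i$ so that each monomial on the $\widetilde{Q}'$-side translates to the corresponding monomial on the $\widetilde{Q}$-side. The hard part will be reconciling the twist factors $q^{-\frac{1}{2}\langle\underline{e},\underline{m}-\underline{e}\rangle}$, the quasi-commutation phases, and the $q$-binomial coefficients from \eqref{eq:cl_exp}, so that equality holds exactly in $\Fcal$ rather than merely up to an overall monomial. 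Once the identity is verified on indecomposables, it extends to arbitrary objects of $\mathcal{C}_{\widetilde{Q}}$ by Theorem \ref{hall multi} together with the fact that both $R_i^+$ and $\Sigma_i^+$ respect direct sums.
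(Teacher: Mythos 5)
First, note that the paper offers no proof of this statement at all: it is imported verbatim from Rupel (\cite[Theorem 2.4, Lemma 5.6]{rupel}), where it is established by an explicit finite-field count relating the submodule Grassmannians of $M$ and $\Sigma_i^+M$ through the exchange binomial. Measured against that, your sketch has a genuine gap at its central step: the claim that $\Sigma_i^+$ induces a bijection between submodules of $M$ and submodules of $\Sigma_i^+M$ (even allowing the dimension-vector correction you describe) is false. Take $Q\colon 1\to 2$ with $i=2$ a sink and $M=P_1$: then $M$ has three submodules ($0$, $S_2$, $M$), while $\Sigma_2^+M=S_1$ has only two, so no bijection can exist. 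This is not a fixable bookkeeping issue within your scheme, because $\Phi_i$ replaces the single generator $X_i$ by the binomial $X_i'$ coming from \eqref{eq:cl_exp}; consequently one monomial of $X_M^{\widetilde{Q}}$ produces several monomials after substitution, and the correct statement is an identity of weighted counts — each stratum is matched only after summing over choices of a subspace at the vertex $i$, producing Grassmannian point counts ${n\brack k}$ — which is precisely the computation in Rupel's lemma, not a term-by-term correspondence of submodules.

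The final reduction is also unjustified. The quantum Caldero--Chapoton map is not multiplicative on direct sums (the paper emphasizes that $X_NX_M\neq X_{N\oplus M}$ in general), so verifying the identity on indecomposables and invoking Theorem \ref{hall multi} plus additivity of $R_i^+$ does not yield it for decomposable objects. To make that step work you would need, at least, that $\Sigma_i^+$ preserves the relevant Hall numbers $\varepsilon_{MN}^{E}$ (true on the subcategory of modules without $S_i$-summands, but requiring a separate argument when $S_i$ or $P_i[1]$ occurs as a summand, and for mixed objects $M_0\oplus P_M[1]$), together with an induction along the resulting filtration to isolate $X_{M\oplus N}$ from the other terms $X_E$ in that formula. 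Your checks on the frame objects $P_j[1]$, $P_i[1]$, $S_i$ are fine, but the two steps above are where the actual content of the theorem lies, and as written they do not go through.
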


\begin{definition}[\cite{CK1}]
            Let $Q$ be an acyclic quiver with associated matrix $B$. $Q$ will be called \emph{graded}\index{graded} if
            there exists a linear form $\epsilon$ on $\mathbb{Z}^{n}$ such that $\epsilon(B \alpha_i)<0$ for any $1\leq i \leq n$
            where $\alpha_i$ still denotes the $i$-th
            vector of the canonical basis of $\mathbb{Z}^{n}$.
\end{definition}

If $Q$ is a graded quiver, then it is proved in \cite{CK1} that we
can endow the cluster algebra of $Q$ with a grading. Namely, the
results are the following:

For any Laurent polynomial $P$ in the variables $X_i$, the
 $supp(P)$ of $P$ is defined as the set of points
$\lambda=(\lambda_i,1\leq i \leq n)$ of $\mathbb{Z}^{n}$ such that
the $\lambda$-component, that is, the coefficient of $\prod_{1\leq i
\leq n} X_i^{\lambda_i}$ in $P$ is nonzero. For any $\lambda$ in
$\mathbb{Z}^{n}$, let $C_\lambda$ be the convex cone with vertex
$\lambda$ and edge vectors generated by the $B\alpha_i$ for any
$1\leq i \leq n$. Then we have the following two propositions as the
quantum versions of Proposition 5 and Proposition 7 in \cite{CK1}
respectively.
\begin{proposition}\label{prop:supportconeCK1}
            Let $Q$ be a graded acyclic quiver with no multiple arrows and
           $M=M_0 \oplus P_M[1]$ with $M_0$ is $kQ$-module and $P_M$ projective $k\widetilde{Q}$-module. Then,
            $supp(X_{M_0\oplus P_M[1]})$
            is in $C_{\lambda_M}$ with
            $\lambda_M:=(-\langle\alpha_i,\underline{dim} M_0\rangle+\langle\underline{dim} P_M, \alpha_i\rangle)
            _{1\leq i \leq n}$.  Moreover, the $\lambda_M$-component of $X_{M_0\oplus P_M[1]}$
          is some nonzero monomials in $\{|k|^{\pm\frac{1}{2}},X^{\pm 1
            }_{n+1},\cdots,X^{\pm 1
            }_{m}\}$.
\end{proposition}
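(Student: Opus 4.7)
The plan is to unwind the quantum Caldero-Chapoton formula and carry out the exponent bookkeeping, then invoke the graded hypothesis to pin down the vertex of the cone. Writing $P_M=\bigoplus_{1\le i\le m}m_i P_i$, so that $\underline{\dim}(P_M/\mathrm{rad}P_M)=(m_1,\ldots,m_m)^{tr}$, I first isolate the first $n$ components of a typical exponent $\widetilde{B}\underline{e}-(\widetilde{I}-\widetilde{R})\underline{m_0}+\underline{\dim}(P_M/\mathrm{rad}P_M)$ that appears in the defining sum for $X_{M_0\oplus P_M[1]}$. Since the principal parts of $\widetilde{B}$, $\widetilde{I}$, $\widetilde{R}$ are $B$, $I_n$, $R$ respectively, this first $n$-block equals $B\underline{e}-(I_n-R)\underline{m_0}+(m_1,\ldots,m_n)^{tr}$. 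Using $\langle\alpha_i,\underline{m_0}\rangle=\alpha_i^{tr}(I_n-R)\underline{m_0}$ together with the projectivity of $P_M$ to compute $\langle\underline{\dim}P_M,\alpha_i\rangle=\dim\mathrm{Hom}_{k\widetilde{Q}}(P_M,S_i)=m_i$, this rewrites cleanly as $\lambda_M+B\underline{e}$.

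Next, since $\underline{e}$ ranges over dimension vectors of submodules of $M_0$, we have $\underline{e}\in\mathbb{Z}^n_{\ge 0}$, so $B\underline{e}=\sum_{i=1}^n e_i\,B\alpha_i$ is a non-negative $\mathbb{R}$-combination of the edge vectors of $C_{\lambda_M}$. Hence the first $n$ coordinates of every monomial in $X_{M_0\oplus P_M[1]}$ lie in $C_{\lambda_M}$, which proves the first assertion. For the $\lambda_M$-component itself, only those $\underline{e}$ with $B\underline{e}=0$ can contribute. Here the graded hypothesis does its job: $0=\epsilon(B\underline{e})=\sum_i e_i\,\epsilon(B\alpha_i)$ with each $\epsilon(B\alpha_i)<0$ and each $e_i\ge 0$ forces $\underline{e}=0$. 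Thus the $\lambda_M$-component reduces to the single summand coming from $\underline{e}=0$, namely $|\mathrm{Gr}_0 M_0|\cdot X^{-(\widetilde{I}-\widetilde{R})\underline{m_0}+\underline{\dim}(P_M/\mathrm{rad}P_M)}=X^{-(\widetilde{I}-\widetilde{R})\underline{m_0}+\underline{\dim}(P_M/\mathrm{rad}P_M)}$. Expanding via $X^{\mathbf{c}}=q^{\frac12\sum_{i<j}c_ic_j\lambda_{ji}}X_1^{c_1}\cdots X_m^{c_m}$ and extracting the coefficient of $X_1^{\lambda_{M,1}}\cdots X_n^{\lambda_{M,n}}$ yields a nonzero monomial in $\{|k|^{\pm 1/2},X_{n+1}^{\pm 1},\ldots,X_m^{\pm 1}\}$, completing the second assertion.

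There is no substantive obstacle: the argument is essentially bookkeeping once the quantum Caldero-Chapoton map is written out. The only delicate step is correctly identifying the first $n$ coordinates of the exponent with $\lambda_M+B\underline{e}$, which uses the compatibility of the Euler form $\langle-,-\rangle$ with the matrices $R$ and $\widetilde{R}$ together with $\mathrm{Hom}_{k\widetilde{Q}}(P_M,S_i)=m_i$ and $\mathrm{Ext}^1_{k\widetilde{Q}}(P_M,S_i)=0$. The graded hypothesis enters exactly at the usual place, isolating $\underline{e}=0$ as the unique non-negative integer vector realizing the vertex of the cone, and the absence of multiple arrows ensures that this vertex contribution survives as a single genuine monomial rather than being swept up in further cancellations.
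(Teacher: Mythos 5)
Your proposal is correct, and it is exactly the argument the paper leaves implicit: the statement is only asserted as the quantum analogue of Proposition 5 of [CK1], and your computation (identifying the first $n$ coordinates of each exponent $\widetilde{B}\underline{e}-(\widetilde{I}-\widetilde{R})\underline{m_0}+\underline{\mathrm{dim}}\,P_M/\mathrm{rad}P_M$ with $\lambda_M+B\underline{e}$, then using $\epsilon(B\alpha_i)<0$ to isolate the $\underline{e}=0$ term) is the expected transplant of that classical proof to the quantum Caldero--Chapoton map. The only cosmetic remark is that your argument never actually needs the ``no multiple arrows'' hypothesis (the vertex term survives simply because $\underline{e}=0$ is the unique contributor), so your closing sentence attributing a role to it is unnecessary rather than wrong.
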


\begin{proposition}\label{prop:graduationCK1}
            Let $Q$ be a graded acyclic quiver with no multiple arrows. For any $m \in \mathbb{Z}$, set
            $$F_m=\left( \bigoplus_{\epsilon(\nu) \leq m} \mathbb{ZP}\prod_{1\leq i \leq n}u_i^{\nu_i}\right) \cap \Acal_{|k|}(\widetilde{Q}),$$
            then the set $(F_m)_{m \in \mathbb{Z}}$ defines a $\mathbb{Z}$-filtration of $\Acal_{|k|}(\widetilde{Q})$.
\end{proposition}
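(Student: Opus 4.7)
The plan is to verify the three conditions defining a $\mathbb{Z}$-filtration of the algebra $\Acal_{|k|}(\widetilde{Q})$: monotonicity $F_m \subseteq F_{m+1}$, exhaustiveness $\bigcup_{m \in \mathbb{Z}} F_m = \Acal_{|k|}(\widetilde{Q})$, and multiplicative compatibility $F_m \cdot F_{m'} \subseteq F_{m+m'}$. The first is immediate, since enlarging $m$ enlarges the direct sum.

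For multiplicative compatibility, I would write typical elements $a \in F_m$ and $b \in F_{m'}$ as finite sums $a = \sum_\nu P_\nu \prod_{i} u_i^{\nu_i}$ and $b = \sum_{\nu'} Q_{\nu'} \prod_{i} u_i^{\nu'_i}$ with $\epsilon(\nu) \leq m$, $\epsilon(\nu') \leq m'$ and $P_\nu, Q_{\nu'} \in \mathbb{ZP}$. Expanding $ab$ inside the ambient quantum torus and commuting each $Q_{\nu'}$ past the relevant $u_i$'s only introduces powers of $q^{\pm 1/2} \in \mathbb{ZP}$, so every resulting term has the shape $R_{\nu,\nu'} \prod_{i} u_i^{\nu_i + \nu'_i}$ for some $R_{\nu,\nu'} \in \mathbb{ZP}$. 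Since $\epsilon$ is $\mathbb{Z}$-linear, $\epsilon(\nu+\nu') = \epsilon(\nu) + \epsilon(\nu') \leq m + m'$, whence $ab \in F_{m+m'}$. The point is that the noncommutativity is harmless here: rearranging factors only creates $q$-powers and never alters the $u_i$-exponents, which is what $\epsilon$ measures.

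The main content is exhaustiveness. I would deduce it either from the quantum Laurent phenomenon of Berenstein--Zelevinsky (\cite[Corollary 5.2]{berzel}), which asserts that every element of $\Acal_{|k|}(\widetilde{Q})$ is a finite Laurent polynomial in $u_1, \ldots, u_n$ with $\mathbb{ZP}$-coefficients and hence has finite support on which $\epsilon$ is bounded above, placing the element in some $F_m$; or equivalently, by invoking Proposition \ref{prop:supportconeCK1} to place each generator $X_M$ (for $M$ an indecomposable rigid $kQ$-module or a shifted projective) inside $F_{\epsilon(\lambda_M)}$, since the cone $C_{\lambda_M}$ has edge vectors $B\alpha_i$ with $\epsilon(B\alpha_i) < 0$ and hence $\epsilon$ attains its maximum on $C_{\lambda_M}$ at the vertex $\lambda_M$. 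The multiplicative compatibility just established then propagates the bound to any finite product of generators, so the union covers $\Acal_{|k|}(\widetilde{Q})$. The main obstacle, conceptually, is this exhaustiveness step, which ultimately relies on the quantum Laurent phenomenon (or equivalently on the support bound of Proposition \ref{prop:supportconeCK1}) as a nontrivial external input.
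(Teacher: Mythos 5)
Your proof is correct, and it is essentially the intended argument: the paper states this proposition without proof as the quantum analogue of Proposition 7 in \cite{CK1}, whose proof is exactly the combination you give. Namely, the quantum Laurent phenomenon (\cite[Corollary 5.2]{berzel}, i.e.\ $\Acal_{|k|}(\widetilde{Q})\subseteq\Tcal_{|k|}=\bigoplus_{\nu\in\mathbb{Z}^n}\mathbb{ZP}\prod_i u_i^{\nu_i}$) yields exhaustiveness, while the $\mathbb{Z}$-linearity of $\epsilon$ together with the observation that reordering monomials in the quantum torus only produces factors $q^{\pm 1/2}\in\mathbb{ZP}$ and never changes the $u_i$-exponents gives $F_m\cdot F_{m'}\subseteq F_{m+m'}$.
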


For any $\underline{d}\in \mathbb{Z}^{n},$ define
$\underline{d}^{+}=(d^+_i)_{1\leq i \leq n}$ such that $d^+_i=d_i$
if $d_i>0$ and $d_i^+=0$ if $d_i\leq 0$ for any $1\leq i \leq n.$
Dually, we set $\underline{d}^-=\underline{d}^+-\underline{d}.$ The
following proposition \ref{10} can be viewed as the categorification
of   \cite[Theorem 7.3]{berzel}.

\begin{proposition}\label{10}
Let $\widetilde{Q}$ be an acyclic quiver. Then the set
$\{\prod_{i=1}^{n}X^{d^+_i}_{S_i}X^{d^-_i}_{P_i[1]}\mid
(d_1,\cdots,d_n)\in \mathbb{Z}^n\}$ is a $\mathbb{ZP}$-basis of
$\Acal_{|k|}(\widetilde{Q})$.
\end{proposition}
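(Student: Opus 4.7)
The plan is to reduce the statement to the standard monomial basis theorem for acyclic quantum cluster algebras of Berenstein--Zelevinsky, \cite[Theorem 7.3]{berzel}, by identifying $X_{P_i[1]}$ and $X_{S_i}$ with the initial quantum cluster variable $M(e_i)$ and its one-step mutation $M'(e_i)$ at vertex $i$, respectively.

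First, I would unpack the quantum Caldero--Chapoton map on each family. Taking $M=0$ and $P=P_i$ in the definition gives directly $X_{P_i[1]}=X^{\underline{\mathrm{dim}}\,P_i/\mathrm{rad}\,P_i}=X^{e_i}=M(e_i)$, so these are precisely the initial quantum cluster variables for $1\le i\le n$. Next, since $S_i$ has only the two submodules $0$ and itself, the formula for $X_?$ gives $X_{S_i}=X^{-(\widetilde{I}-\widetilde{R})e_i}+X^{\widetilde{B}e_i-(\widetilde{I}-\widetilde{R})e_i}$.

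Second, I would match $X_{S_i}$ with $M'(e_i)$. Applying the mutation formula \eqref{eq:cl_exp} at direction $k=i$ with ${\bf c}=e_i$ collapses the sum to two terms and produces $M'(e_i)=X^{Ee_i}+X^{Ee_i+{\bf b}^i}$, so it suffices to verify the column identity $Ee_i=-(\widetilde{I}-\widetilde{R})e_i$ entrywise. For the $i$-th entry both sides equal $-1$ (using $\widetilde{r}_{ii}=0$ since $\widetilde{Q}$ has no loops). For $j\neq i$, the definition of $E$ gives $(Ee_i)_j=\max(0,-b_{ji})=\max(0,|\{i\to j\}|-|\{j\to i\}|)$; acyclicity of $\widetilde{Q}$ forces at most one of these two arrow counts to be nonzero, so this simplifies to $|\{i\to j\}|=\widetilde{r}_{ji}$, which is exactly the $j$-th entry of $\widetilde{R}e_i$. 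Hence $X_{S_i}=M'(e_i)$.

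Third, I would invoke \cite[Theorem 7.3]{berzel}: since $\widetilde{Q}$ is acyclic, the standard monomials $\prod_{i=1}^{n}M(e_i)^{a_i}M'(e_i)^{b_i}$ (in the fixed order $i=1,\ldots,n$) subject to $a_i,b_i\ge 0$ and $a_ib_i=0$ form a $\mathbb{ZP}$-basis of the quantum cluster algebra. Via the specialization map $ev$, which gives a $\mathbb{ZP}$-linear bijection on quantum monomials as recalled at the end of Section 2.1, this basis descends to $\mathcal{A}_{|k|}(\widetilde{Q})$; reparameterizing $(a_i,b_i)\leftrightarrow(d_i^-,d_i^+)$ via $d_i:=b_i-a_i\in\mathbb{Z}$ and substituting the identifications $M(e_i)=X_{P_i[1]}$ and $M'(e_i)=X_{S_i}$ recovers precisely the family in the statement.

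The only genuine computation is the column identity $Ee_i=-(\widetilde{I}-\widetilde{R})e_i$ in the second step, which crucially uses the acyclicity of $\widetilde{Q}$ to collapse $\max(0,-b_{ji})$ to $\widetilde{r}_{ji}$; once this categorical identification is secured, the proposition follows as a direct application of Berenstein--Zelevinsky's standard monomial theorem, matching the description of the result as a ``categorification of \cite[Theorem 7.3]{berzel}''.
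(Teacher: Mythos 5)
Your proposal is correct and follows essentially the same route as the paper: identify $X_{P_i[1]}=X_{\tau P_i}$ with the initial quantum cluster variables and $X_{S_i}$ with the variables obtained by one-step mutation in direction $i$, and then invoke \cite[Theorem 7.3]{berzel}. The only difference is that you verify $X_{S_i}=M'(e_i)$ by an explicit computation with the mutation formula, whereas the paper declares this ``easy to check'' and cites \cite[Theorem 5.4.3]{fanqin}.
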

\begin{proof}
For any $1\leq i\leq n,$ it is easy to check that the following set
is a cluster $$\{X_{\tau P_{1}},\cdots,X_{\tau P_{i-1}},
X_{S_{i}},X_{\tau P_{i+1}},\cdots, X_{\tau P_{n}}\}$$ obtained by
the mutation in direction $i$ of the cluster $$\{X_{\tau
P_{1}},\cdots,X_{\tau P_{i-1}}, X_{\tau P_{i}},X_{\tau
P_{i+1}},\cdots, X_{\tau P_{n}}\}.$$ Then the proposition
immediately follows from \cite[Theorem 7.3]{berzel} and
\cite[Theorem 5.4.3]{fanqin}.
\end{proof}

The main result is the following theorem showing the
$\mathbb{ZP}$-basis in a specialized quantum cluster algebra of
finite type. The basis is the good basis in a cluster algebra of
finite type in \cite{CK1} by specializing $q$ and coefficients to
$1$ and the existence of Hall polynomials for representation-finite
algebras \cite{ringel:2}.

\begin{theorem}\label{12}
Let $Q$ be a simple-laced Dynkin quiver with $Q_0=\{1,2,\cdots, n\}$
and  $\widetilde{Q}$  reachable from $\widetilde{Q'}$ for any graded
quiver $Q'$.  Then the set $\mathcal{B}(Q):=\{X_{M}|M=M_0 \oplus
P_M[1]\ \text{with}\ M_0\ \text{is}\   kQ\text{-module},\ P_M\
\text{a direct sum of projective}\ k\widetilde{Q}\text{-modules}\
P_i(1\leq i\leq n),\
 M\ \text{rigid}$
$\text{object in}\ \mathcal C_{\widetilde{Q}}\}$ is a
$\mathbb{ZP}$-basis of $\Acal_{|k|}(\widetilde{Q})$.
\end{theorem}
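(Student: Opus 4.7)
My strategy mirrors the approach of Caldero--Keller in \cite{CK1} for the classical ``good basis'' of a finite-type cluster algebra, lifted to the quantum setting via the multiplicative structure developed in Section 3. The first step is a reduction to the case when $\widetilde{Q}$ itself has graded principal part. By hypothesis, $\widetilde{Q}$ is reachable from some $\widetilde{Q}'$ whose principal part $Q'$ is graded; since mutation at a sink or a source coincides with the corresponding BGP reflection, the composite $\Phi=\Phi_{i_\ell}\circ\cdots\circ\Phi_{i_1}$ is an isomorphism $\Acal_{|k|}(\widetilde{Q})\to\Acal_{|k|}(\widetilde{Q}')$, and by Theorem \ref{ref} it sends $X^{\widetilde{Q}}_M$ to $X^{\widetilde{Q}'}_{R^+_{i_\ell}\cdots R^+_{i_1}M}$. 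The extended reflection functors are equivalences of cluster categories, so they induce a bijection on isoclasses of rigid objects; consequently $\Phi$ carries $\mathcal{B}(Q)$ bijectively onto $\mathcal{B}(Q')$, and it suffices to prove the theorem in the graded case. Simultaneously, the existence of Hall polynomials for representation-finite hereditary algebras \cite{ringel:2} guarantees that every $X_M$ is a polynomial expression in $|k|^{\pm 1/2}$ with coefficients depending only on $Q$, so the basis is independent of the finite field.

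For linear independence I would combine the support-cone control of Proposition \ref{prop:supportconeCK1} with the $\ZZ$-filtration of Proposition \ref{prop:graduationCK1}. For any rigid $M=M_0\oplus P_M[1]$ the Laurent polynomial $X_M$ is supported in the convex cone $C_{\lambda_M}$ with apex $\lambda_M$, and the $\lambda_M$-component is a nonzero monomial in $|k|^{\pm 1/2}$ and the frozen variables. The classical index/denominator theorem for Dynkin cluster categories shows that $M\mapsto\lambda_M$ is injective on isoclasses of rigid objects in $\mathcal C_{\widetilde{Q}}$. Since the edge vectors $B\alpha_i$ of each cone have strictly negative $\epsilon$-value, a hypothetical nontrivial linear relation among the $X_M$ would possess a unique summand $X_{M^{\ast}}$ maximizing $\epsilon(\lambda_{M^{\ast}})$, whose leading coefficient would then have to vanish---a contradiction.

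For spanning, the starting point is Proposition \ref{10}: the standard monomials $\prod_{i=1}^{n}X_{S_i}^{d_i^+}X_{P_i[1]}^{d_i^-}$ for $(d_i)\in\ZZ^n$ form a $\mathbb{ZP}$-basis of $\Acal_{|k|}(\widetilde{Q})$. In simply-laced Dynkin type every indecomposable $kQ$-module is rigid, so the elements of $\mathcal{B}(Q)$ are precisely the $X_M$ with $M$ a direct sum of pairwise ext-orthogonal indecomposable $kQ$-modules together with shifts of distinct indecomposable projective $k\widetilde{Q}$-modules. Iterated application of Theorem \ref{hall multi} to the products $X_{S_i}^{a}$ and $X_{S_i}\cdot X_M$ produces triangular identities $X_NX_M=q^{\star}X_{N\oplus M}+\sum_{E}q^{\star}X_E$ where $E$ runs over middle terms of non-split extensions, and Theorem \ref{exchange2} handles the mixed products involving $X_{P_i[1]}$. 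Inverting these identities in the $\epsilon$-filtration---each genuine extension strictly increases $\epsilon$---expresses each standard monomial as a $\mathbb{ZP}$-combination of elements of $\mathcal{B}(Q)$.

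The main obstacle is the spanning step, specifically verifying that the rewriting stays inside $\mathcal{B}(Q)$ and terminates. The first point holds because in Dynkin type every middle term of an extension between rigid ext-orthogonal modules is again rigid (its indecomposable summands are pairwise ext-orthogonal), so all $X_E$ that appear already lie in $\mathcal{B}(Q)$; the second because the $\epsilon$-filtration strictly increases at each expansion while the indecomposable $kQ$-modules are finite in number. A secondary check is that the quantum prefactors $q^{\Lambda(\cdot,\cdot)/2}$ and $q^{\langle\cdot,\cdot\rangle/2}$ occurring in the multiplication formulas lie in $\ZZ[q^{\pm 1/2}]$; this follows from the integrality of $\Lambda$ and the Euler form together with the compatibility condition \eqref{eq:simply_laced_compatible}.
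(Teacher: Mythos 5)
Your reduction to the graded case via Theorem \ref{ref} and your linear-independence argument (support cones from Proposition \ref{prop:supportconeCK1} plus the $\epsilon$-filtration of Proposition \ref{prop:graduationCK1}, together with injectivity of $M\mapsto\lambda_M$ on rigid objects) match the paper. The problem is your spanning step, which takes a different route from the paper and rests on a false claim. You assert that ``in Dynkin type every middle term of an extension between rigid ext-orthogonal modules is again rigid, so all $X_E$ that appear already lie in $\mathcal{B}(Q)$.'' This fails already for simples: if there is an arrow $1\to 2$, then $S_1$ and $S_2$ are each rigid but not Ext-orthogonal, and in the expansion of $q^{[S_1,S_2]^1}X_{S_2}X_{S_1}$ given by Theorem \ref{hall multi} the sum $\sum_E\varepsilon_{S_1S_2}^{E}X_E$ contains the term $E=S_1\oplus S_2$, which is \emph{not} rigid, alongside the indecomposable extension. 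More generally, whenever $\mathrm{Ext}^1_{kQ}(M,N)\neq 0$ the Hall-type formula produces middle terms with mutually extending summands (e.g.\ $M_{12}\oplus S_1\oplus S_2$ for $M=S_1^{\oplus 2}$, $N=S_2^{\oplus 2}$ in type $A_2$), so your rewriting leaves the set $\mathcal{B}(Q)$ after one step, and your termination argument (``each genuine extension strictly increases $\epsilon$'') no longer closes the induction: you would need a genuine straightening/PBW-type argument to eliminate the non-rigid terms (this is exactly what the Remark after Theorem \ref{hall multi} only conjectures), and none is supplied. Note also that in the truly Ext-orthogonal situation Theorem \ref{hall multi} gives only $X_NX_M=q^{\ast}X_{N\oplus M}$ with no new terms, so the cases where your claim is true are precisely the cases where no rewriting is needed.

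The paper avoids this issue by running the triangularity in the opposite direction: for each rigid $M$ with $\underline{\dim}M=\underline{m}\in\mathbb{Z}^n$ it expands $X^{\widetilde{Q}'}_M$ in the standard-monomial basis of Proposition \ref{10},
\[
X^{\widetilde{Q}'}_{M}=b_{\underline{m}}\prod_{i=1}^{n}(X_{S_i}^{\widetilde{Q}'})^{m^+_i}(X^{\widetilde{Q}'}_{P_i[1]})^{m^-_i}
+\sum_{\epsilon(\underline{l})<\epsilon(\underline{m})}b_{\underline{l}}\prod_{i=1}^{n}(X_{S_i}^{\widetilde{Q}'})^{l^+_i}(X^{\widetilde{Q}'}_{P_i[1]})^{l^-_i},
\]
and uses gradedness of $Q'$ (Propositions \ref{prop:supportconeCK1} and \ref{prop:graduationCK1}) to see that $b_{\underline{m}}$ is a nonzero monomial in $q^{\pm 1/2}$ and the frozen variables, hence invertible in $\mathbb{ZP}$. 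Since rigid objects of $\mathcal{C}_{\widetilde{Q}'}$ are parametrized by their dimension vectors in $\mathbb{Z}^n$, this $\epsilon$-triangular change of basis with invertible leading coefficients yields spanning and independence simultaneously, with no need for the multiplication theorems of Section 3 in the finite-type argument. If you want to salvage your approach, replace the rigidity claim by an honest straightening argument for the non-rigid terms (for instance, $X_{S_1\oplus S_2}$ can be re-expressed as $q^{\ast}X_{S_1}X_{S_2}$ using the Ext-vanishing in the opposite order), but then you must prove that this process terminates and stays in the $\mathbb{ZP}$-span of $\mathcal{B}(Q)$, which is precisely the content you currently assume.
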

\begin{proof}
Assume that $\sigma_{i_{l}}\cdots
\sigma_{i_1}(\widetilde{Q'})=\widetilde{Q}(1\leq
i_1,\cdots,i_{l}\leq n)$ . For any $X^{\widetilde{Q}'}_{M}\in
\mathcal{B}(Q')$  with dimension vector
$\mathrm{\underline{dim}}M=\underline{m}=(m_1,\cdots,m_n)\in
\mathbb{Z}^n$, we know that $X^{\widetilde{Q}'}_{M}\in
\Acal_{|k|}(\widetilde{Q}')$. Then by Proposition \ref{10} we have
$$X^{\widetilde{Q}'}_{M}=b_{\underline{m}}\prod_{i=1}^{n}(X_{S_i}^{\widetilde{Q}'})^{m^+_i}(X^{\widetilde{Q}'}_{P_i[1]})^{m^-_i}
+\sum_{\epsilon(\underline{l})<
\epsilon(\underline{m})}b_{\underline{l}}\prod_{i=1}^{n}(X_{S_i}^{\widetilde{Q}'})^{l^+_i}(X^{\widetilde{Q}'}_{P_i[1]})^{l^-_i}$$
where $\underline{l}=(l^+_i-l^-_i)_{i\in Q_0}$, $b_{\underline{m}}$
and $b_{\underline{l}}\in \mathbb{ZP}$. As $Q'$ is a graded quiver,
then by Proposition \ref{prop:supportconeCK1}, Proposition
\ref{prop:graduationCK1}, it follows that $b_{\underline{m}}$ must
be some nonzero monomial in $\{q^{\pm\frac{1}{2}},X^{\pm 1
            }_{n+1},\cdots,X^{\pm 1
            }_{m}\}$. Therefore,  $\mathcal{B}(Q')$ is a $\mathbb{ZP}$-basis of
$\Acal_{|k|}(\widetilde{Q}')$. There is a natural isomorphism:
$\Phi_{i_{l}}\cdots \Phi_{i_1}:
\mathcal{A}_{|k|}(\widetilde{Q}')\rightarrow
\mathcal{A}_{|k|}(\widetilde{Q})$. By Theorem \ref{ref}, we obtain
that
$$\Phi_{i_{l}}\cdots \Phi_{i_1}(X^{\widetilde{Q}'}_M)=X^{\widetilde{Q}}_{R^+_{i_{l}}\cdots
R^+_{i_1}(M)}.$$ Hence, $\mathcal{B}(Q)$ is a $\mathbb{ZP}$-basis of
$\Acal_{|k|}(\widetilde{Q})$.

\end{proof}

By the existence of Hall polynomials for representation-finite
algebras \cite{ringel:2}, we have the following corollary.
\begin{corollary}
With the above notation, the set $\mathcal{B}(Q)$ is a
$\mathbb{ZP}$-basis of $\Acal_{q}(\widetilde{Q})$.
\end{corollary}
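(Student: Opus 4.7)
The plan is to deduce the corollary from Theorem \ref{12} via a specialization argument, with Ringel's existence of Hall polynomials for representation-finite algebras \cite{ringel:2} supplying the necessary polynomial dependence on $q$. For a Dynkin quiver $Q$, Ringel's theorem produces universal polynomials $\varphi^M_{AB}(q)\in\mathbb{Z}[q]$ such that $F^M_{AB}=\varphi^M_{AB}(|k|)$ for every finite field $k$; consequently $|\mathrm{Gr}_{\underline{e}}M|$ is also a polynomial in $q$. This allows the quantum Caldero-Chapoton formula to define $X_M$ as a well-defined element of $\Tcal$ for every rigid object $M$ in $\mathcal{C}_{\widetilde{Q}}$ when $q^{1/2}$ is treated as a formal variable, and the specialization map $ev$ sends these generic $X_M$ to their counterparts in $\Tcal_{|k|}$.

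For linear independence I would suppose $\sum_i c_i X_{M_i}=0$ in $\Acal_q(\widetilde{Q})$ with pairwise non-isomorphic rigid $M_i\in\mathcal{C}_{\widetilde{Q}}$ and $c_i\in\mathbb{ZP}$, and apply $ev$ at each finite field to transport the relation into $\Acal_{|k|}(\widetilde{Q})$. By Theorem \ref{12}, every specialized coefficient $c_i|_{q^{1/2}\mapsto|k|^{1/2}}$ must vanish. Viewing each $c_i$ as a Laurent polynomial in $q^{1/2}$ with coefficients in $\mathbb{Z}[X_{n+1}^{\pm 1},\ldots,X_m^{\pm 1}]$, vanishing at the infinitely many prime-power values $|k|^{1/2}$ forces $c_i=0$ identically.

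For spanning, I would re-run the proof of Theorem \ref{12} with $q^{1/2}$ formal rather than specialized. The three ingredients needed, namely Proposition \ref{10} (a consequence of \cite[Theorem 7.3]{berzel} and \cite[Theorem 5.4.3]{fanqin}, already formulated in the formal setting of $\Acal_q$), Propositions \ref{prop:supportconeCK1} and \ref{prop:graduationCK1} on the graded support cone and filtration, and the transfer via the BGP-reflection isomorphism of Theorem \ref{ref}, all depend only on expansions whose structure constants are Ringel's Hall polynomials evaluated at $q$, so they carry over intact to the generic setting. Combined with \cite{fanqin}'s identification of the quantum cluster variables with $X_M$ for indecomposable rigid $M$, this exhibits every element of $\Acal_q(\widetilde{Q})$ as a $\mathbb{ZP}$-linear combination of elements of $\mathcal{B}(Q)$.

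The principal obstacle I foresee is the spanning step, specifically confirming that Propositions \ref{prop:supportconeCK1} and \ref{prop:graduationCK1}, stated only for the specialized algebra, genuinely lift to $\Acal_q$. The existence of Hall polynomials is precisely what makes this transfer automatic: the leading $\lambda_M$-component and the filtration degrees are combinatorial data read off from the Hall-polynomial expansion, which is the same in the generic and specialized regimes. Once this is granted, the argument of Theorem \ref{12} applies verbatim and delivers the corollary.
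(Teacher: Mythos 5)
Your argument is exactly the route the paper takes: the paper's one-line proof invokes Ringel's Hall polynomials to transfer Theorem \ref{12} from the specialized algebras $\Acal_{|k|}(\widetilde{Q})$ to the generic $\Acal_{q}(\widetilde{Q})$, and your proposal simply spells out the two halves of that transfer (linear independence by evaluating at infinitely many prime powers, spanning because the expansions of Proposition \ref{10} and Propositions \ref{prop:supportconeCK1}--\ref{prop:graduationCK1} have Hall-polynomial structure constants). The details you supply are correct and consistent with what the paper leaves implicit.
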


\section{Generic bases in  specialized quantum cluster algebras of  affine type}
Throughout this section, we assume that $Q$ is a tame quiver with
trivial valuation. A \emph{tame quiver}\index{affine quiver} is an
acyclic quiver whose underlying diagram in an extended Dynkin
diagram. We recall some facts about representation theory of tame
quivers (for example, refer to
\cite{dlab}\cite{CB:lectures}\cite{ringel:1099} for more details).
Let $k$ be a finite field with $|k|=q$. The category $rep(kQ)$ of
finite-dimensional representations can be identified with the
category of mod-$kQ$ of finite-dimensional modules over the path
algebra $kQ.$ For any $kQ$-representation $M$ and $i\in Q_0$, we
denote by $(M)_i$ the $k$-space at $i$. It is well-known that
indecomposable $kQ$-module contains (up to isomorphism) three
families (by the Auslander-Reiten quiver): the component of
indecomposable regular modules $\mathcal R(Q)$, the component of the
preprojective modules $\mathcal P(Q)$ and the component of the
preinjective modules $\mathcal I(Q)$. If $P \in \mathcal P(Q)$, $I
\in \mathcal I(Q)$ and $R \in \mathcal R(Q)$, then
    $$\Hom_{kQ}(R,P) \simeq \Hom_{kQ}(I,R) \simeq \Hom_{kQ}(I,P)=0,$$
    and
    $$\Ext^1_{kQ}(P,R)\simeq \Ext^1_{kQ}(R,I)\simeq \Ext^1_{kQ}(P,I)=0.$$
    If $M$ and $N$ are two regular indecomposable modules in different tubes, then
    $$\Hom_{kQ}(M,N)=0 \textrm{ and } \Ext^1_{kQ}(M,N)=0.$$

The Auslander-Reiten quiver of $\mathcal{R}(Q)$ consists of tubes.
An indecomposable regular module $R$ is regular simple if it
contains no nontrivial regular submodule, and call it homogeneous if
$\tau_{Q} R\cong R$. Any regular  module at the bottom of a tube is
regular simple module. If it is homogeneous, then the tube is a
homogeneous tube, otherwise, called a non-homogeneous tube.  For a
regular module $R$, its degree is the index $[\mathrm{End}_{kQ}(R) :
k]$. There are at most $t\leq 3$ non-homogeneous tubes for $Q.$ We
denote these non-homogeneous tubes by $\mathcal{T}_1, \cdots,
\mathcal{T}_t$. Let $r_i$ be the rank of $\mathcal{T}_i$ and the
regular simple modules in $\mathcal{T}_i$ be $E^{(i)}_{1}, \cdots
E^{(i)}_{r_i}$ such that $\tau_{Q} E^{(i)}_2=E^{(i)}_1, \cdots,
\tau_{Q} E^{(i)}_1=E^{(i)}_{r_i}$ for $i=1, \cdots, t$. If we
restrict the discussion to one tube, we will omit the index $i$ for
convenience. There are $q+1-t$ homogeneous tubes which regular
simples at the bottoms are of degree $1$. Given a regular simple
module $E$ in a tube, $E[i]$ is the indecomposable regular module
with quasi-socle $E$ and quasi-length $i$ for any $i\in \mathbb{N}$.
The minimal imaginary root of $Q$ is denoted by
$\delta=(\delta_i)_{i\in Q_0}.$ Note that the regular simple module
of degree $1$ at the bottom in a homogeneous tube is of dimension
vector $\delta$. We now prove that the quantum Caldero-Chapoton map
does not depend on the modules in the homogeneous tube with
dimension vector $\delta$.
\begin{lemma}\label{independent}
Let $\lambda$ and $\lambda'$ be in $k$ such that $E(\lambda)$ and
$E(\lambda')$ are two regular simple modules of dimension vector
$\delta$. Then $X_{E(\lambda)}=X_{E(\lambda')}.$
\end{lemma}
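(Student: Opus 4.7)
The plan is to unwind the definition of the quantum Caldero-Chapoton map on both sides and reduce the claim to a statement about the cardinalities of quiver Grassmannians. Since $\underline{\mathrm{dim}}\,E(\lambda) = \underline{\mathrm{dim}}\,E(\lambda') = \delta$, applying the formula gives
\[
X_{E(\lambda)} = \sum_{\underline{e}\leq\delta}|\mathrm{Gr}_{\underline{e}}E(\lambda)|\,q^{-\frac{1}{2}\langle\underline{e},\delta-\underline{e}\rangle}X^{\widetilde{B}\underline{e}-(\widetilde{I}-\widetilde{R})\delta},
\]
and an identical expression for $X_{E(\lambda')}$. The coefficient $q^{-\frac{1}{2}\langle\underline{e},\delta-\underline{e}\rangle}$ and the monomial $X^{\widetilde{B}\underline{e}-(\widetilde{I}-\widetilde{R})\delta}$ depend only on $\underline{e}$ and $\delta$, not on $\lambda$. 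Consequently, it is enough to prove $|\mathrm{Gr}_{\underline{e}}E(\lambda)| = |\mathrm{Gr}_{\underline{e}}E(\lambda')|$ for each $\underline{e}$ with $0\leq \underline{e}\leq \delta$.

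The boundary cases $\underline{e}=0$ and $\underline{e}=\delta$ give $1$ on both sides. For the remaining range $0 < \underline{e} < \delta$, the next step is to show that any submodule $U\subseteq E(\lambda)$ with $\underline{\mathrm{dim}}\,U=\underline{e}$ is preprojective. Decompose $U=U_{\mathcal P}\oplus U_{\mathcal R}\oplus U_{\mathcal I}$ according to the preprojective/regular/preinjective trichotomy. The preinjective part vanishes because $\mathrm{Hom}(\mathcal I,\mathcal R)=0$ forces $U_{\mathcal I}=0$. The regular part vanishes because $E(\lambda)$ is regular simple: any regular submodule of $E(\lambda)$ is $0$ or $E(\lambda)$ itself, and $U_{\mathcal R}=E(\lambda)$ is ruled out by $\underline{\mathrm{dim}}\,U<\delta$. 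Hence $U\cong U_{\mathcal P}$ is preprojective, giving the decomposition
\[
|\mathrm{Gr}_{\underline{e}}E(\lambda)|=\sum_{[P]}N_P(\lambda), \qquad N_P(\lambda):=|\{U\subseteq E(\lambda):U\cong P\}|,
\]
where $[P]$ runs over the finitely many isoclasses of preprojective modules of dimension vector $\underline{e}$. This indexing set is clearly independent of $\lambda$.

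The remaining and most substantive step is to establish that each $N_P(\lambda)$ is independent of $\lambda$. A first observation is that for preprojective $P$ and regular $E(\lambda)$ one has $\mathrm{Ext}^1(P,E(\lambda))\cong D\mathrm{Hom}(E(\lambda),\tau P)=0$, since $\tau P$ is preprojective and $\mathrm{Hom}(\mathcal R,\mathcal P)=0$. Consequently $\dim_k\mathrm{Hom}(P,E(\lambda))=\langle\underline{e},\delta\rangle$ depends only on dimension vectors, so the ambient $\mathrm{Hom}$-space has the same size for $\lambda$ and $\lambda'$. The plan is then to invoke the existence of Hall polynomials in the tame hereditary setting (à la Hubery): for fixed isoclasses of $P$ and $A$ and for the fixed decomposition type of ``regular simple of dimension $\delta$ in a homogeneous tube'', the Hall number $F^{E(\lambda)}_{A,P}$ is the value at $|k|$ of a polynomial depending only on these types, not on the parameter $\lambda$. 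Since $E(\lambda)$ and $E(\lambda')$ share this decomposition type, one obtains $N_P(\lambda)=\sum_{[A]}F^{E(\lambda)}_{A,P}=\sum_{[A]}F^{E(\lambda')}_{A,P}=N_P(\lambda')$, completing the proof.

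The main obstacle is this last independence step; the preceding reductions are essentially bookkeeping once one notices that only the Grassmannian counts can differ. Invoking Hall polynomials cleanly on homogeneous tubes is the cleanest route, but an alternative, more hands-on argument would classify all embeddings $P\hookrightarrow E(\lambda)$ via the fact that a nonzero map from a preprojective $P$ to a regular simple $E(\lambda)$ has image a preprojective quotient of $P$, and count kernel/image data using only the Euler form; one can then verify directly that the resulting count is a universal polynomial in $|k|$ attached to $(P,\delta)$, independent of which homogeneous tube $E(\lambda)$ inhabits.
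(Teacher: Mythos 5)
Your argument is correct, but it takes a genuinely different route from the paper's. The paper proves the lemma by an explicit construction: it picks a vertex $e$ with $\delta_e=1$, writes out the structure map of a regular simple of dimension $\delta$ at the arrow $\alpha$ incident to $e$ (treating type $\widetilde{A}$ and the sink/source cases separately), and shows that replacing the parameter $a+b\lambda$ by $a+b\lambda'$ carries each (necessarily preprojective) submodule of $E(\lambda)$ to an isomorphic submodule of $E(\lambda')$, using $\tau$-periodicity of $E(\lambda)$ to propagate from projective submodules to all preprojective ones; this produces an explicit bijection between $\mathrm{Gr}_{\underline{e}}(E(\lambda))$ and $\mathrm{Gr}_{\underline{e}}(E(\lambda'))$ for every $\underline{e}$. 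You instead make the same first reduction to equality of Grassmannian cardinalities, observe that every proper submodule of $E(\lambda)$ is preprojective, and then obtain independence of the counts from Hubery's Hall polynomials for tame quivers. Both routes are valid: the paper's proof is self-contained and yields an actual bijection, at the price of coordinate-level case analysis; yours is shorter and coordinate-free but leans on a substantial external theorem. Two small caveats on your version: Hubery's result is stated with respect to Segre (decomposition) classes rather than individual isoclasses, so you should either sum over the class of the quotient $A$ (which your formula effectively does) or note that in this situation all relevant classes are singletons, because quotients of $E(\lambda)$ have dimension vector strictly smaller than $\delta$ and hence carry no homogeneous regular summands; and your sketched alternative --- computing $|\Hom(P,E(\lambda))|=q^{\langle\underline{e},\delta\rangle}$, which is $\lambda$-independent since $\Ext^1(P,E(\lambda))=0$, and then counting maps by their preprojective images via surjection numbers and induction on dimension --- can be carried out in full and would make your proof essentially self-contained, closest in spirit to what the paper achieves without Hall polynomials.
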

\begin{proof}
We only need to prove
$|Gr_{\underline{e}}(E(\lambda))|=|Gr_{\underline{e}}(E(\lambda'))|.$
 Let $e$ be a vertex in $Q_0$ such that
$\delta_e=1.$  If $e$ is a sink and $Q$ is a quiver with the
underlying graph not of type $\widetilde{A}_n$,  then there is
unique edge $\alpha\in Q_1$ such that $t(\alpha)=e.$  It is easy to
check $\delta_{s(\alpha)}=2.$ Let $P_e$ be the simple projective
module corresponding $e$ and $I$ be the indecomposable preinjective
module of dimension vector $\delta-\mathrm{\underline{dim}}P_e$.
Then $\mathrm{dim}_{k}\mathrm{Ext}^1_{k Q}(I, P_e)=2$. Given any
$\varepsilon\in \mathrm{Ext}^1_{k Q}(I, P_e),$ we have a short exact
sequence whose equivalence class is $\varepsilon$
$$
\xymatrix{\varepsilon:\quad 0\ar[r]& P_e\ar[rr]^{\left(%
\begin{array}{c}
  1 \\
  0 \\
\end{array}%
\right)}&&E_{\varepsilon}\ar[rr]^{\left(%
\begin{array}{cc}
  0 & 1 \\
\end{array}%
\right)}&&I\ar[r]&0}
$$
where $(E_{\varepsilon})_{i}=(P_e)_i\oplus (I)_i$ for any $i\in
Q_0$, $(E_{\varepsilon})_{\beta}=(I)_{\beta}$ for $\beta\neq \alpha$
and $(M_{\epsilon})_{\alpha}$ is
$$
(E_{\varepsilon})_{\alpha}=\left(%
\begin{array}{cc}
  0 & m(\varepsilon, \alpha) \\
  0 & 0 \\
\end{array}%
\right)
$$ where $m(\varepsilon, \alpha)\in \mathrm{Hom}_{k}((I)_{s(\alpha)}, (P_e)_e).$
 Any regular
simple $k Q$-module $E$ of dimension vector $\delta$ satisfies that
$E_{\alpha}$ is as follows
$$
\xymatrix{(E)_{s(\alpha)}=k^2\ar[rr]^{\left(%
\begin{array}{cc}
  1 & \lambda \\
\end{array}%
\right)}&&(E)_{e}=k}
$$
where $\lambda\in k^*\setminus\{1\}.$   Let $P$ be an indecomposable
projective module such that $P\subseteq M(\lambda)$ and $d_{P,
e}=0$. Then $P$ is also a submodule of $E(\lambda')$ and $d_{P,
s(\alpha)}=0$. Let $P$ be an indecomposable projective module such
that $P\subseteq E(\lambda)$ and $d_{P, e}=1$. Assume that
$P_{\alpha}$ is $\xymatrix{k\ar[r]^{a+b\lambda}&k}$, then there
exists $P'\in Gr_{\underline{e}}(E(\lambda'))$  such that $P'\cong
P$ and $P'_{\alpha}$ is $\xymatrix{k\ar[r]^{a+b\lambda'}&k}$. Since
$\tau E(\lambda)=E(\lambda)$, we know $\tau^{-i}P$ and $\tau^{-1}P'$
are the submodules of $E(\lambda)$ and $E(\lambda')$ for any $i\in
\mathbb{N}$, respectively. Hence, any preprojective submodule $X$ of
$E(\lambda)$ corresponds to a preprojective submodule $X'$ of
$E(\lambda')$ such that $X\cong X'$.  Let $Q$ be of type
$\widetilde{A}_n$. Then there are two adjacent edge $\alpha$ and
$\beta$. Any regular simple module $E$ of dimension vector $\delta$
satisfies that $E_{\alpha}$ is as follows:
$$
\xymatrix{k\ar[r]^1&k&k\ar[l]_{\lambda}}$$ for some $\lambda\in
k^{*}.$ The discussion is similar as above.  If $e$ is a source, the
discussion is also similar. Therefore, there is a homeomorphism
between $Gr_{\underline{e}}(E(\lambda))$ and
$Gr_{\underline{e}}(E(\lambda'))$ for any dimension vector
$\underline{e}.$
\end{proof}

Now let $Q$ be a graded tame quiver and $E_{1}, \cdots E_{r}$ be
regular simple modules in a nonhomogeneous tube with rank $r$ such
that $\tau_{Q} E_2=E_1, \cdots, \tau_{Q} E_1=E_{r}$. Let
$\widetilde{Q}$ be a quiver obtained from $Q$ by adding frozen
vertices $\{n+1,\cdots,2n\}$ and arrows $n+i\longrightarrow i$ for
any $1\leq i\leq n.$ Then we have the following result.
\begin{lemma}\label{k2}
$$X_{E_i[r-1]}X_{E_{i-1}}=q^{\frac{1}{2}\Lambda((\widetilde{I}-\widetilde{R})\underline{e_i[r-1]},
(\widetilde{I}-\widetilde{R})\underline{e_{i-1}})}X_{E_i[r]}+q^{\frac{1}{2}\Lambda((\widetilde{I}-\widetilde{R})\underline{e_i[r-1]},
(\widetilde{I}-\widetilde{R})\underline{e_{i-1}})-\frac{1}{2}}X_{E_i[r-2]\oplus
I[-1]}$$ where $I$ is an injective $k\widetilde{Q}$-module
associated to frozen vertices.
\end{lemma}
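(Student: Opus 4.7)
The plan is to apply Theorem \ref{multi-formula} with the assignment $N = E_i[r-1]$ and $M = E_{i-1}$, so that $X_N X_M = X_{E_i[r-1]} X_{E_{i-1}}$ coincides with the left-hand side of the lemma. First I would verify the two one-dimensionality hypotheses. In $k\widetilde{Q}$-mod the AR translate $\tau_{\widetilde{Q}} E_{i-1}$ decomposes as $E_{i-2} \oplus I$, where $I$ is the injective $k\widetilde{Q}$-module supported at the frozen vertex attached to $i-1$ by the added arrow. Since $E_{i-2}$ is precisely the quasi-top of $E_i[r-1]$, the space $\mathrm{Hom}_{k\widetilde{Q}}(E_i[r-1], \tau_{\widetilde{Q}} E_{i-1})$ is one-dimensional, spanned by the canonical surjection onto $E_{i-2}$ followed by the inclusion. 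Auslander--Reiten duality then yields $\dim \mathrm{Ext}^1_{k\widetilde{Q}}(E_{i-1}, E_i[r-1]) = 1$.

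Next I would identify the two canonical sequences. The non-split extension $0 \to E_i[r-1] \to E \to E_{i-1} \to 0$ attaches $E_{i-1}$ above $E_i[r-1]$; uniseriality in the tube of rank $r$ forces $E = E_i[r]$, producing the first summand $X_{E_i[r]}$ on the right-hand side. For the second sequence $0 \to D_0 \to E_i[r-1] \to \tau_{\widetilde{Q}} E_{i-1} \to \tau A \oplus I \to 0$, the chosen map is the quasi-top surjection $E_i[r-1] \twoheadrightarrow E_{i-2}$ composed with the inclusion $E_{i-2} \hookrightarrow \tau_{\widetilde{Q}} E_{i-1}$. Its kernel is $D_0 = E_i[r-2]$, and its cokernel is exactly the injective summand $I$ of $\tau_{\widetilde{Q}} E_{i-1}$; consequently $A = 0$, $A_0 = 0$, and $D_0 \oplus A_0 \oplus I[-1] = E_i[r-2] \oplus I[-1]$, matching the second module in the statement.

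It remains to verify the remaining hypotheses of Theorem \ref{multi-formula}: $\mathrm{Hom}_{k\widetilde{Q}}(D_0, \tau A_0 \oplus I) = \mathrm{Hom}_{k\widetilde{Q}}(E_i[r-2], I) = 0$, which holds because $E_i[r-2]$ is supported entirely on principal vertices whereas $I$ sits at a frozen vertex, while $\mathrm{Hom}_{k\widetilde{Q}}(A_0, I) = 0$ is trivial since $A_0 = 0$. Applying the theorem and using $\langle A_0, D_0 \rangle = 0$, the second exponent collapses to $\tfrac{1}{2}\Lambda((\widetilde{I}-\widetilde{R})\underline{e_i[r-1]},(\widetilde{I}-\widetilde{R})\underline{e_{i-1}}) + \tfrac{1}{2}\langle M, N \rangle$. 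A direct Euler-form computation gives $\langle E_{i-1}, E_i[r-1] \rangle = [E_{i-1}, E_i[r-1]] - [E_{i-1}, E_i[r-1]]^1 = 0 - 1 = -1$: there is no nonzero morphism from $E_{i-1}$ to $E_i[r-1]$ because $E_{i-1}$ is not a submodule of the uniserial $E_i[r-1]$, and the $\mathrm{Ext}^1$ dimension was computed above. This produces the stated $q^{-1/2}$ factor on the second summand.

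The main obstacle I anticipate is the precise identification of the injective summand $I$ inside $\tau_{\widetilde{Q}} E_{i-1}$. This requires tracking how the minimal projective presentation of $E_{i-1}$ in $k\widetilde{Q}$-mod differs from its presentation in $kQ$-mod because of the added arrows $n+j \to j$, and reading off the resulting injective envelope contribution at the frozen vertex. Once this piece of homological bookkeeping is pinned down, the remainder of the argument is a direct substitution into Theorem \ref{multi-formula}.
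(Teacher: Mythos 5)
Your overall strategy is exactly the paper's: the paper proves Lemma \ref{k2} by exhibiting the two exact sequences $0\to E_i[r-1]\to E_i[r]\to E_{i-1}\to 0$ and $0\to E_i[r-2]\to E_i[r-1]\to \tau_{\widetilde{Q}}E_{i-1}\to I\to 0$ and invoking Theorem \ref{multi-formula}, and your identifications $E=E_i[r]$, $D_0=E_i[r-2]$, $A_0=0$, together with $\langle E_{i-1},E_i[r-1]\rangle=-1$ and $\langle A_0,D_0\rangle=0$, are precisely the bookkeeping the paper leaves implicit; these identifications and the resulting exponent $-\tfrac12$ are correct.

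However, one intermediate assertion you rely on is false as stated: $\tau_{\widetilde{Q}}E_{i-1}$ does \emph{not} decompose as $E_{i-2}\oplus I$. Since all added arrows point from frozen to principal vertices, the restriction of $\tau_{\widetilde{Q}}E_{i-1}$ to the principal vertices is a \emph{submodule}, and it equals $\tau_Q E_{i-1}=E_{i-2}$ (restriction is exact and $\nu_{\widetilde{Q}}P_j$ restricts to $\nu_Q P_j$), with quotient a semisimple module $I$ supported at frozen vertices; but this extension is non-split: on each injective $\nu_{\widetilde{Q}}P_j$ the arrow $n+u\to u$ acts injectively (paths from $n+u$ to $j$ correspond bijectively to paths from $u$ to $j$), hence no simple $S_{n+u}$ embeds into $\tau_{\widetilde{Q}}E_{i-1}$ and $I$ is only a quotient, not a summand. (Relatedly, $I$ is not concentrated at a single frozen vertex ``attached to $i-1$''; its frozen support is governed by $\underline{\dim}\,E_{i-2}$.) The good news is that everything you actually need survives this correction: since $E_i[r-1]$ vanishes at frozen vertices, any morphism $E_i[r-1]\to\tau_{\widetilde{Q}}E_{i-1}$ lands in the principal-part submodule $E_{i-2}$, so $\mathrm{Hom}_{k\widetilde{Q}}(E_i[r-1],\tau_{\widetilde{Q}}E_{i-1})\cong\mathrm{Hom}_{kQ}(E_i[r-1],E_{i-2})$ is one-dimensional, the canonical map has kernel $E_i[r-2]$, image $E_{i-2}$ and cokernel the injective module $I$ at frozen vertices (so $\tau A=0$, $A_0=0$), and $\mathrm{Hom}(E_i[r-2],I)=0$ because $E_i[r-2]$ vanishes at frozen vertices. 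With the splitting claim replaced by this submodule--quotient argument, your proof is a correct and complete version of the paper's.
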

\begin{proof}
We have the following exact sequences
$$0\longrightarrow E_i[r-1]\longrightarrow E_i[r]\longrightarrow E_{i-1}\longrightarrow 0$$
$$0\longrightarrow E_i[r-2]\longrightarrow E_i[r-1]\longrightarrow \tau_{\widetilde{Q}}E_{i-1}\longrightarrow I\longrightarrow 0.$$
Hence the proof follows from Theorem \ref{multi-formula}.
\end{proof}

Define the set
$$
\textbf{D}(Q)=\{\underline{d}\in \mathbb{N}^ {Q_0}\mid  \exists
\mbox{ a regular rigid module}\ R\   \mbox{and a regular simple
module}\ E$$$$ \mbox{\ with dimension vector }\delta \mbox{ such
that } \mathrm{\underline{dim}}((E^{\oplus n}\oplus
R)=\underline{d}\}.
$$
 Set $\textbf{E}(Q)=\{\underline{d}\in
\mathbb{Z}^{Q_0}\mid \exists M=M_0 \oplus P_M[1]\ \text{with}\ M_0\
\text{is}\  kQ\text{-module},\  P_M\ \text{projective}\
k\widetilde{Q}\text{-module},$
 $M\ \text{rigid object in}\ \mathcal C_{\widetilde{Q}}\  \text{with}\ \mathrm{\underline{dim}} M=\underline{d}\}$. By the main theorem in
\cite{DXX}, we have that $\mathbb{Z}^{Q_0}$ is the disjoint union of
$\textbf{D}(Q)$ and $\textbf{E}(Q)$. We make an assignment, i.e., a
map $$\phi: \mathbb{Z}^{Q_0}\rightarrow
\mathrm{obj}(\mathcal{C}_{\widetilde{Q}})$$ and set
$$X_{\phi(\underline{d})}:=(X_{E})^{n}X_{R}$$ if $\underline{d}\in\textbf{D}(Q)$ and $|Q_0|>2;$

$$
X_{\phi(\underline{d})}:=X^{n}_{\delta}
$$ for some $\delta$ in a homogeneous tube of degree $1$ if $\underline{d}\in\textbf{D}(Q)$ and $Q$ is the Kronecker
quiver;
$$
X_{\phi(\underline{d})}:=X_{M}
$$ if $\underline{d}\in\textbf{E}(Q)$. It is clear that the above assignment is not
unique. For simplicity and no confusions, we omit $\phi$ in the
notation $X_{\phi(\underline{d})}$.

\begin{theorem}\label{affine}
Let k be a finite field with $|k|=q\neq 2.$ Let $Q$ be a tame quiver
with $Q_0=\{1,2,\cdots, n\}$ and $\widetilde{Q}$ reachable from
$\widetilde{Q'}$ for any graded quiver $Q'$. Then the set
$$\mathcal{B}(Q):=\{X_{\underline{d}}|\underline{d}\in \mathbb{Z}^{Q_0}\}$$
is a $\mathbb{QP}$-basis of
$\Acal_{|k|}(\widetilde{Q})\otimes_{\mathbb{ZP}} \mathbb{QP}$.
\end{theorem}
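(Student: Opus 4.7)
The plan is to mimic the strategy of Theorem \ref{12}, reducing via the BGP-reflection transport to a graded tame quiver $Q'$ and then comparing $\mathcal{B}(Q')$ with the standard $\mathbb{ZP}$-basis of Proposition \ref{10}. First I would fix a graded tame quiver $Q'$ together with an admissible sequence of sinks or sources $(i_1,\ldots,i_l)$ such that $\widetilde{Q}=\sigma_{i_l}\cdots\sigma_{i_1}(\widetilde{Q}')$. By Theorem \ref{ref}, the composition $\Phi_{i_l}\cdots\Phi_{i_1}$ is a $\mathbb{QP}$-algebra isomorphism carrying $\mathcal{B}(Q')$ onto $\mathcal{B}(Q)$, so it suffices to prove the theorem for the graded quiver $Q'$.

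For linear independence over $\mathbb{QP}$, I would appeal to Propositions \ref{prop:supportconeCK1} and \ref{prop:graduationCK1}. For each rigid $M$ the leading term of $X_M$ in the $\epsilon$-filtration lies at the exponent $\lambda_M$ with a nonzero monomial coefficient in $q^{\pm 1/2}$ and the frozen variables. For $\phi(\underline{d})=E^{\oplus n}\oplus R$ with $E$ a homogeneous tube simple of dimension $\delta$, the product $(X_E)^n X_R$ has an analogous leading term determined by $\underline{d}$, again with a nonzero leading monomial coefficient. Since the assignment $\underline{d}\mapsto\lambda_{\phi(\underline{d})}$ is injective, distinct basis elements have leading terms at distinct positions in the filtration quotients and linear independence follows.

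For spanning, Proposition \ref{10} shows that every element of $\Acal_{|k|}(\widetilde{Q}')$ is a $\mathbb{ZP}$-combination of the standard monomials $\prod_{i=1}^n X_{S_i}^{d_i^+}X_{P_i[1]}^{d_i^-}$, so it suffices to rewrite each such monomial as a $\mathbb{QP}$-combination of $\mathcal{B}(Q')$. I would induct on the $\epsilon$-grading, invoking the multiplication formulas of Section 3: Theorem \ref{hall multi} expands products $X_N X_M$ of module elements as Hall-algebra-style sums over extensions; Theorems \ref{multi-formula} and \ref{exchange2} together with their special cases handle interactions with shifts $P[1]$ and $I[-1]$; and Lemma \ref{k2} moves along quasi-lengths within a non-homogeneous tube. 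Whenever a regular module of dimension $\delta$ in a homogeneous tube appears, Lemma \ref{independent} identifies its image with a single element $X_\delta$. Working over $\mathbb{QP}$ is essential because these recursions introduce quantum integers that must be inverted, and the hypothesis $|k|\neq 2$ guarantees the existence of at least one homogeneous tube of degree $1$ for every tame type.

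The hard part will be the spanning step for $\underline{d}\in\textbf{D}(Q)$, where the target basis element $X_\delta^n X_R$ involves a homogeneous tube simple. My plan is to reach $X_\delta^n X_R$ indirectly: apply Theorem \ref{hall multi} to powers of quasi-simples in a non-homogeneous tube to obtain an expansion whose leading term (in the $\epsilon$-filtration) is $X_{E_i[r]}^n X_R$ modulo strictly lower-degree terms, then iterate Lemma \ref{k2} to trade $X_{E_i[r]}$ for cluster monomials plus a controlled correction, and finally use Lemma \ref{independent} together with a Green-type identity to exchange the non-homogeneous $X_{E_i[r]}$ for the homogeneous $X_\delta$ modulo smaller $\epsilon$-degree. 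Closing this induction is the core technical obstacle; once it is complete, the triangular relation between the standard monomials and $\mathcal{B}(Q')$ combined with the linear independence above yields the $\mathbb{QP}$-basis property for $Q'$, and the reflection isomorphism transports it to $\widetilde{Q}$.
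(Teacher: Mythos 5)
Your overall architecture matches the paper's: transport to a graded quiver $Q'$ via Theorem \ref{ref}, and deduce the basis property from a triangular relation with the standard monomials of Proposition \ref{10}, whose leading coefficients are controlled by Propositions \ref{prop:supportconeCK1} and \ref{prop:graduationCK1}. But there is a genuine gap exactly where you flag "the core technical obstacle": you never establish that $X_E$, for $E$ a regular simple of dimension vector $\delta$ in a homogeneous tube, lies in $\Acal_{|k|}(\widetilde{Q}')\otimes_{\mathbb{ZP}}\mathbb{QP}$, and without this the elements $X_{\underline{d}}$ with $\underline{d}\in\textbf{D}(Q)$ are not even known to belong to the algebra, so neither your spanning induction nor the triangularity argument can be closed. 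The paper's proof supplies precisely this missing step by a concrete computation: take a sink $e$ of $Q'$, the simple projective $P_e$, and the indecomposable preinjective $I$ with $\underline{\mathrm{dim}}\,I=\delta-\underline{\mathrm{dim}}\,P_e$; apply Theorem \ref{hall multi} to both products $X_{P_e}X_I$ and $X_IX_{P_e}$. Since the middle terms of extensions of $I$ by $P_e$ are $P_e\oplus I$, the homogeneous-tube modules $E(\lambda)$ (all giving the same $X_E$ by Lemma \ref{independent}), and the modules $E_1^{(i)}[r_i]$ of the $t$ non-homogeneous tubes, the suitably twisted difference of the two products equals $(q+1-t)(q-1)X_E+\sum_{i=1}^t(q-1)X_{E_1^{(i)}[r_i]}$. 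Lemma \ref{k2} puts each $X_{E_1^{(i)}[r_i]}$ in the algebra, and one then solves for $X_E$ after dividing by $(q+1-t)(q-1)$; this is exactly where $\mathbb{QP}$ is needed and where the hypothesis $q\neq 2$ enters (it guarantees $q+1-t\neq 0$ since $t\leq 3$), rather than merely "guaranteeing a homogeneous tube of degree $1$" as you state.

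Your proposed substitute --- trading $X_{E_i[r]}$ for $X_\delta$ via "a Green-type identity" or difference-type exchange --- is not available at this stage of the paper: the difference property $X_{E_1[r]}=X_{E(\lambda)}+q^{1/2}X_{E_2[r-2]}$ is only proved later (Propositions \ref{k1} and \ref{k3}) for types $\widetilde{A}$ and $\widetilde{D}$, and is conjectural for general tame quivers, so an argument routed through it would not cover all affine types claimed by the theorem. A secondary point: for $\underline{d}\in\textbf{D}(Q)$ your leading-term claim is asserted for the product $(X_E)^nX_R$, which is not literally $X_{E^{\oplus n}\oplus R}$ in the quantum setting (Theorem \ref{hall multi} shows the product differs by $q$-powers and extension terms, since $E$ has self-extensions), so the application of Proposition \ref{prop:supportconeCK1} to these products needs at least a remark; the paper's route avoids dwelling on this by first securing membership in the algebra and then expanding in the Proposition \ref{10} basis.
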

\begin{proof}
Assume that $e$ is a sink in $Q'$ and $P_e$ projective $kQ$-module
associated to $e.$ Let $I$ be an indecomposable preinjective
$kQ$-module with dimension vector $\delta-\mathrm{\underline{dim}}
P_e$. By Lemma \ref{independent} and  Theorem \ref{hall multi}, we
have
$$q^{2}X_{P_e}X_{I}=q^{-\frac{1}{2}\Lambda((\widetilde{I}-\widetilde{R})\underline{i},
(\widetilde{I}-\widetilde{R})\underline{p_e})} (X_{P_e\oplus
I}+(q+1-t)\varepsilon_{IP_e}^{E}X_E+\sum_{i=1}^{t}\varepsilon_{IP_e}^{E_1[r_i]}X_{E_1[r_i]}).$$
and
$$X_{I}X_{P_e}=q^{-\frac{1}{2}\Lambda((\widetilde{I}-\widetilde{R})\underline{p_e},
(\widetilde{I}-\widetilde{R})\underline{i})} X_{P_e\oplus I}.$$ It
follows that
$$q^{\frac{1}{2}\Lambda((\widetilde{I}-\widetilde{R})\underline{i},
(\widetilde{I}-\widetilde{R})\underline{p_e})+2}
X_{P_e}X_{I}-q^{\frac{1}{2}\Lambda((\widetilde{I}-\widetilde{R})\underline{p_e},
(\widetilde{I}-\widetilde{R})\underline{i})}X_{I}X_{P_e}=(q+1-t)\varepsilon_{IP_e}^{E}X_E+\sum_{i=1}^{t}\varepsilon_{IP_e}^{E_1[r_i]}X_{E_1[r_i]}$$
where, by definition,
$\varepsilon_{IP_e}^{E}=\varepsilon_{IP_e}^{E_1[r_i]}=q-1.$ By Lemma
\ref{k2}, $X_{E_1[r_i]}$ is in
$\Acal_{|k|}(\widetilde{Q}')\otimes_{\mathbb{ZP}} \mathbb{QP}$ for
$1\leq i\leq t$, thus
 $X_E$  is in $\Acal_{|k|}(\widetilde{Q}')\otimes_{\mathbb{ZP}} \mathbb{QP}$.
It follows that for any $\underline{m}=(m_1,\cdots,m_n)\in
\mathbb{Z}^n$, $X_{\underline{m}}\in\mathcal{B}(Q')$.  Then by
Proposition \ref{10} we have
$$X^{\widetilde{Q}'}_{\underline{m}}=b_{\underline{m}}\prod_{i=1}^{n}(X_{S_i}^{\widetilde{Q}'})^{m^+_i}(X^{\widetilde{Q}'}_{P_i[1]})^{m^-_i}
+\sum_{\epsilon(\underline{l})<
\epsilon(\underline{m})}b_{\underline{l}}\prod_{i=1}^{n}(X_{S_i}^{\widetilde{Q}'})^{l^+_i}(X^{\widetilde{Q}'}_{P_i[1]})^{l^-_i}$$
where  $b_{\underline{m}},b_{\underline{l}}\in \mathbb{ZP}$. As $Q'$
is a graded quiver, then by Proposition \ref{prop:supportconeCK1},
Proposition \ref{prop:graduationCK1}, it follows that
$b_{\underline{m}}$ must be some nonzero monomial in
$\{q^{\pm\frac{1}{2}},X^{\pm 1
            }_{n+1},\cdots,X^{\pm 1
            }_{m}\}$. Therefore,  $\mathcal{B}(Q')$ is a $\mathbb{QP}$-basis of
$\Acal_{|k|}(\widetilde{Q}')\otimes_{\mathbb{ZP}} \mathbb{QP}$. By
Theorem \ref{ref}, we obtain that $\mathcal{B}(Q)$ is a
$\mathbb{QP}$-basis of
$\Acal_{|k|}(\widetilde{Q})\otimes_{\mathbb{ZP}} \mathbb{QP}$.
\end{proof}

By \cite[Proposition 5]{CR}, the quiver Grassmannians
$\mathrm{Gr}_{\underline{e}}(M)$ of a $kQ$-module $M$ are some
polynomials in $\mathbb{Z}[q].$ Then by specializing $q$ and
coefficients to $1$, the bases in Theorem \ref{affine} induces the
integral bases in affine cluster algebras (\cite{DXX}\cite{Dup}). In
the same way, we have the following corollary.
\begin{corollary}
With the above notation, the set $\mathcal{B}(Q)$ is a
$\mathbb{QP}$-basis of
$\Acal_{q}(\widetilde{Q})\otimes_{\mathbb{ZP}} \mathbb{QP}$.
\end{corollary}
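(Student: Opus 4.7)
The plan is to lift Theorem \ref{affine} from the specialized quantum cluster algebras $\Acal_{|k|}(\widetilde{Q})$ to the generic one $\Acal_{q}(\widetilde{Q})$ by exploiting the polynomiality of all the relevant data in $q$ and the specialization map $ev$ introduced in Section 2.

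First I would check that each $X_{\underline{d}}\in\mathcal{B}(Q)$ makes sense as an element of $\Acal_{q}(\widetilde{Q})\otimes_{\mathbb{ZP}}\mathbb{QP}$. By \cite[Proposition 5]{CR}, the counts $|\mathrm{Gr}_{\underline{e}}(M)|$ for any $kQ$-module $M$ are given by polynomials in $|k|$ with integer coefficients, so the defining formula of the quantum Caldero--Chapoton map produces a bona fide element of $\Tcal$ with $q^{1/2}$ treated as a formal variable, and $ev$ sends this generic $X_{\underline{d}}$ to the specialized one.

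Next, for spanning, I would observe that every identity used in the proof of Theorem \ref{affine}—namely Lemma \ref{independent}, Theorem \ref{hall multi}, Lemma \ref{k2}, the key relation
\[
q^{\frac12\Lambda((\widetilde{I}-\widetilde{R})\underline{i},(\widetilde{I}-\widetilde{R})\underline{p_e})+2}X_{P_e}X_I-q^{\frac12\Lambda((\widetilde{I}-\widetilde{R})\underline{p_e},(\widetilde{I}-\widetilde{R})\underline{i})}X_IX_{P_e}=(q+1-t)\varepsilon_{IP_e}^EX_E+\sum_{i=1}^t\varepsilon_{IP_e}^{E_1[r_i]}X_{E_1[r_i]},
\]
the gradation/support cone statements (Proposition \ref{prop:supportconeCK1}, Proposition \ref{prop:graduationCK1}), and Proposition \ref{10}—is a polynomial identity in $q$. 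Since these identities hold after specialization for infinitely many prime powers $|k|\neq 2$, they hold generically in $\Acal_{q}(\widetilde{Q})\otimes_{\mathbb{ZP}}\mathbb{QP}$. The reduction argument of Theorem \ref{affine} then writes any element as a $\mathbb{QP}$-linear combination of members of $\mathcal{B}(Q)$, giving the spanning property.

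For linear independence, suppose a finite relation $\sum_i f_i\,X_{\underline{d}_i}=0$ holds in $\Acal_{q}(\widetilde{Q})\otimes_{\mathbb{ZP}}\mathbb{QP}$ with $f_i\in\mathbb{QP}$. Applying $ev$ at any prime power $|k|\neq 2$ produces a relation in $\Acal_{|k|}(\widetilde{Q})\otimes_{\mathbb{ZP}}\mathbb{QP}$ to which Theorem \ref{affine} applies; thus each coefficient $f_i$, regarded as a Laurent polynomial in $q^{1/2}$ with coefficients in the frozen Laurent polynomial ring, vanishes at $q^{1/2}=|k|^{1/2}$ for every such $|k|$. Since this is infinitely many values, each $f_i$ is identically zero. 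The main obstacle is verifying that the spanning step really lifts uniformly in $q$: one must make sure that no cancellation used during the inductive reduction in Theorem \ref{affine} is special to a finite field, and that the division by $q-1$ appearing implicitly in the coefficient $(q+1-t)\varepsilon_{IP_e}^E/((q+1-t)(q-1))$-type expressions remains valid generically. This is legitimate because $q-1$ is invertible in the fraction field used for $\mathbb{QP}$-coefficients and all numerators are polynomial in $q$, so the argument goes through verbatim.
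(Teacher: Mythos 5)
Your route is the same as the paper's: the paper disposes of this corollary by a one‑line transfer from Theorem \ref{affine}, invoking \cite[Proposition 5]{CR} so that all quiver Grassmannian counts are values of fixed polynomials in $q$ and hence the generic $X_{\underline{d}}$ specialize, under $ev$, to the specialized ones for every finite field. Your linear‑independence half (specialize at infinitely many prime powers $|k|\neq 2$, use Theorem \ref{affine} there, and conclude each coefficient, a Laurent polynomial in $q^{1/2}$ over the frozen Laurent ring, vanishes identically) is exactly this transfer made explicit and is fine.

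The problem is the step you yourself flag as the main obstacle, which you then discharge with a false statement. By the paper's definition, $\mathbb{QP}$ is the ring of Laurent polynomials in the elements of $\mathcal{P}$ with coefficients in $\mathbb{Q}[q^{\pm 1/2}]$; it is not a fraction field in $q$, so $q-1$ and $q+1-t$ are not invertible in it, and invertibility in the ambient skew‑field $\Fcal$ is beside the point, since what must be shown is membership in the subalgebra $\Acal_q(\widetilde{Q})\otimes_{\mathbb{ZP}}\mathbb{QP}$. The relation obtained from Lemma \ref{independent}, Theorem \ref{hall multi} and Lemma \ref{k2} only shows that $(q+1-t)(q-1)X_E$ lies in $\Acal_q(\widetilde{Q}')\otimes_{\mathbb{ZP}}\mathbb{QP}$, and one cannot simply divide: this is precisely where the specialized setting of Theorem \ref{affine} is used, because there $q=|k|\geq 3$ is an integer and $(q+1-t)(q-1)$ is a unit of $\mathbb{Q}$. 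Knowing in addition that $X_E$ specializes correctly at every prime power does not force $X_E$ into $\Acal_q(\widetilde{Q})\otimes_{\mathbb{ZP}}\mathbb{QP}$. So the claim that the reduction "goes through verbatim" generically is not justified as written; to repair the spanning/membership half you must either prove a genuine divisibility statement inside the generic algebra or avoid the division altogether, as the difference property (Propositions \ref{k1} and \ref{k3}) does for types $\widetilde{A}$ and $\widetilde{D}$, where it even yields $\mathbb{ZP}$‑coefficients. The paper's own proof passes over this point silently in its appeal to polynomiality, but since your write‑up makes the division explicit, the incorrect justification is a genuine gap in your argument.
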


\subsection{An example} Let $Q$ be the tame quiver of type
$\widetilde{A}^{(1)}_{1}$ as follows
$$\xymatrix{1\ar @<2pt>[r] \ar@<-2pt>[r]&  2}$$

It is well known that the  regular indecomposable modules decomposes
into a direct sum of homogeneous tubes  indexed by the projective
line $\mathbb{P}^1(k)$. We denote the  regular indecomposable
modules in the homogeneous tube for $p\in \mathbb{P}^1(k)$ of degree
$1$ by $R_p(n)$ where $n\in\mathbb{N}$ and
$\underline{\mathrm{dim}}R_p(n)=(n, n)$.

We consider the following  ice quiver $\widetilde{Q}$ with frozen
vertices $3$ and $4$:
$$\xymatrix{1\ar @<2pt>[r] \ar@<-2pt>[r]&  2\\
3\ar[u]&  4\ar[u]}$$ Thus we have
$$\widetilde{R}=\left(\begin{array}{cc} 0 & 0\\
2& 0\\
0& 0\\
0 & 0\end{array}\right),\ \widetilde{I}=\left(\begin{array}{cc} 1 & 0\\
0& 1\\
0& 0\\
0 & 0\end{array}\right),\ \widetilde{B}=\left(\begin{array}{cc} 0 & 2\\
-2& 0\\
1& 0\\
0 & 1\end{array}\right).$$

An easy calculation shows that the following antisymmetric $4\times
4$ integer
matrix $$\Lambda=\left(\begin{array}{cccc} 0 & 0& -1& 0\\
0 & 0& 0& -1\\
1 & 0& 0& -2\\
0 & 1& 2& 0\end{array}\right)$$ satisfying
\begin{align}\label{eq:simply_laced_compatible}
\Lambda(-\widetilde{B})=\widetilde{I}:=\begin{bmatrix}I_2\\0
\end{bmatrix},
\end{align}
where $I_2$ is the identity matrix of size $2\times 2$. Then we have
the following result.

\begin{lemma}\label{kro}
Let $R_{p}(1)$ be the indecomposable regular module of degree $1$ as
above. Then
$$X_{R_p(1)}=X_{S_1}X_{S_2}-q^{-\frac{1}{2}}X_1X_2X_4.$$
\end{lemma}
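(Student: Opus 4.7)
My approach combines two applications of the quantum Hall multiplication theorem (Theorem \ref{hall multi}) with a direct monomial expansion to isolate the quantum commutator $q^{2} X_{S_{2}}X_{S_{1}} - X_{S_{1}}X_{S_{2}}$.

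First I would apply Theorem \ref{hall multi} to $M=S_{1}, N=S_{2}$. The short exact sequences $0\to S_{2}\to E\to S_{1}\to 0$ are parametrised by $\mathrm{Ext}^{1}(S_{1},S_{2})$, which for the Kronecker quiver has the trivial class (middle term $S_{1}\oplus S_{2}$) and nontrivial classes whose middle terms are the regular simples $R_{p}(1)$ for $p\in\mathbb{P}^{1}(k)$. Lemma \ref{independent} ensures that all $X_{R_{p}(1)}$ coincide, and counting equivalence classes ($q-1$ classes per point, $q+1$ points) the theorem expresses $q^{2}X_{S_{2}}X_{S_{1}}$ as a multiple of $X_{S_{1}\oplus S_{2}} + (q^{2}-1)X_{R_{p}(1)}$ with a phase $q^{-\Lambda/2}$. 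Applying Theorem \ref{hall multi} a second time with $M=S_{2}, N=S_{1}$ (where $[S_{2},S_{1}]^{1}=0$) gives the companion relation $X_{S_{1}}X_{S_{2}} = q^{-\Lambda'/2} X_{S_{1}\oplus S_{2}}$.

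Next I would plug in the specific skew-form $\Lambda$ of the example to evaluate the two phases; a short computation with the given $4\times 4$ matrix shows that both $\Lambda((\widetilde{I}-\widetilde{R})\underline{s_{1}},(\widetilde{I}-\widetilde{R})\underline{s_{2}})$ and $\Lambda((\widetilde{I}-\widetilde{R})\underline{s_{2}},(\widetilde{I}-\widetilde{R})\underline{s_{1}})$ vanish. Eliminating $X_{S_{1}\oplus S_{2}}$ between the two relations then produces the clean commutator identity
\[
(q^{2}-1)\,X_{R_{p}(1)} \;=\; q^{2} X_{S_{2}}X_{S_{1}} - X_{S_{1}}X_{S_{2}}.
\]

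To finish, I would expand both products using the monomial decompositions $X_{S_{1}}=M(-1,2,0,0)+M(-1,0,1,0)$ and $X_{S_{2}}=M(0,-1,0,0)+M(2,-1,0,1)$ (obtained by the quantum Caldero–Chapoton formula applied to the two submodules of each simple) together with the multiplication rule $M(\mathbf{c})M(\mathbf{d})=q^{\Lambda(\mathbf{c},\mathbf{d})/2}M(\mathbf{c}+\mathbf{d})$. Of the four cross-term pairs appearing in $q^{2}X_{S_{2}}X_{S_{1}} - X_{S_{1}}X_{S_{2}}$, three have pairwise-matching phases (the relevant $\Lambda$-values vanish) and contribute $(q^{2}-1)$ times the corresponding monomials from $X_{S_{1}}X_{S_{2}}$ itself; the remaining pair involves $M(1,1,0,1)$ and carries a nontrivial phase $\pm 2$ that produces a residual coefficient. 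Dividing through by $q^{2}-1$ and converting $M(1,1,0,1)$ to the ordered monomial $q^{1/2}X_{1}X_{2}X_{4}$ yields the stated identity after rearrangement.

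The main obstacle is bookkeeping for the three distinct sources of $q^{1/2}$-phases — Euler-form phases in the quantum Caldero–Chapoton formula, the twist $q^{\Lambda(\mathbf{c},\mathbf{d})/2}$ in products of $M(\mathbf{c})$'s, and the ordering phase converting $M(\mathbf{c})$ to $X_{1}^{c_{1}}\cdots X_{4}^{c_{4}}$ — and verifying that the interplay between them leaves exactly one surviving monomial with the advertised coefficient $-q^{-1/2}$.
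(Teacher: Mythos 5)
Your overall strategy (two applications of Theorem \ref{hall multi} to eliminate $X_{S_1\oplus S_2}$, then a monomial expansion in the quantum torus) is workable and genuinely different from the paper's proof, which simply writes out $X_{S_1}$, $X_{S_2}$, $X_{R_p(1)}$ by the quantum Caldero--Chapoton formula and multiplies. But as written your key intermediate identity is false because you have the homological direction backwards relative to the paper's conventions. The paper sets $\dim\mathrm{Ext}^1(S_i,S_j)=|\{\mathrm{arrows}\ j\to i\}|$ (this is what produces the given $\widetilde{R}$ with $\widetilde{r}_{21}=2$), so here $\mathrm{Ext}^1(S_2,S_1)\cong k^2$ and $\mathrm{Ext}^1(S_1,S_2)=0$: the nonsplit extensions are $0\to S_1\to R_p(1)\to S_2\to 0$, and $S_1$, not $S_2$, is the submodule of $R_p(1)$ (visible in the paper's expansion of $X_{R_p(1)}$, whose term $X^{(-1,-1,1,0)}$ comes from the subrepresentation of dimension $(1,0)$). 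Hence Theorem \ref{hall multi} gives $q^{2}X_{S_1}X_{S_2}=X_{S_1\oplus S_2}+(q^2-1)X_{R_p(1)}$ and $X_{S_2}X_{S_1}=X_{S_1\oplus S_2}$ (both $\Lambda$-phases do vanish, as you say), i.e. $(q^2-1)X_{R_p(1)}=q^{2}X_{S_1}X_{S_2}-X_{S_2}X_{S_1}$, with the opposite ordering to yours.

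This is not a harmless relabelling: with the given $\Lambda$ one computes $X_{S_1}X_{S_2}=X_{R_p(1)}+q^{-1}X^{(1,1,0,1)}$ and $X_{S_2}X_{S_1}=X_{R_p(1)}+q\,X^{(1,1,0,1)}$, so your proposed combination satisfies $q^{2}X_{S_2}X_{S_1}-X_{S_1}X_{S_2}=(q^2-1)X_{R_p(1)}+(q^{3}-q^{-1})X^{(1,1,0,1)}$; the residual term does not cancel, and no rearrangement of that identity yields the lemma, so your final bookkeeping step would fail. With the ordering corrected, the plan closes exactly as you intend: the $(1,1,0,1)$ cross-terms cancel in $q^{2}X_{S_1}X_{S_2}-X_{S_2}X_{S_1}$, giving $X_{R_p(1)}=X_{S_1}X_{S_2}-q^{-1}X^{(1,1,0,1)}$, and $X^{(1,1,0,1)}=M(1,1,0,1)=q^{1/2}X_1X_2X_4$ gives the stated formula. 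Note, though, that these last two equalities are essentially the paper's entire proof (direct expansion plus the twisted multiplication rule); what the Hall-multiplication detour buys you is only that you never have to list the submodule Grassmannians of $R_p(1)$ itself.
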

\begin{proof}
By definition, we have
$$X_{S_1}=X^{(-1,0,1,0)}+X^{(-1,2,0,0)};$$
$$X_{S_2}=X^{(0,-1,0,0)}+X^{(2,-1,0,1)};$$
$$X_{R_p(1)}=X^{(1,-1,1,1)}+X^{(-1,1,0,0)}+X^{(-1,-1,1,0)}.$$
Hence the lemma follows from a direct calculation.
\end{proof}
By Lemma \ref{1}, the expression of $X_{R_p(1)}$ is independent of
the choice of  $p\in \mathbb{P}^1_k$ of degree 1. Hence, we set
$$
X_{\delta}:= X_{R_p(1)}.
$$

\begin{remark}\label{kro2}

\begin{enumerate}
  \item By Lemma \ref{kro}, we know that $X_{\delta}$ belongs to $\Acal_{|k|}(\widetilde{Q}).$
 \item By the following Theorem \ref{affine}, $\mathcal{B}(Q)$ is a
$\mathbb{ZP}-$basis in the quantum cluster algebra
$\Acal_{|k|}(\widetilde{Q}).$ Moreover, if specializing $q$ and
coefficients to $1$, $\mathcal{B}(Q)$ is exactly the generic basis
in the sense of \cite{Dup}.
\end{enumerate}
\end{remark}
Note that there is an alternative choice of $(\Lambda,
\widetilde{B})$, i.e.,
$\widetilde{Q}=Q$ and set $\Lambda=\left(\begin{array}{cc} 0 & 1\\
-1 &
0\end{array}\right)$ and $\widetilde{B}=\left(\begin{array}{cc} 0 & 2\\
-2 & 0\end{array}\right)$. Then we have $
\Lambda(-\widetilde{B})=\left(
                          \begin{array}{cc}
                            2 & 0 \\
                            0 & 2 \\
                          \end{array}
                        \right).
$ Hence, one should consider the category of $KQ$-representations
for a field K with $|K|=q^2.$ In this way, we obtain a quantum
cluster algebra of Kronecker type without coefficients. The
multiplication and the bar-invariant bases in this algebra have been
thoroughly studied in \cite{DX}. Moreover, under the specialization
$q=1$, the bases in \cite{DX} induce the canonical basis,
semicanonical basis and generic basis of the cluster algebra of the
Kronecker quiver in the sense of \cite{sherzel},\cite{calzel} and
\cite{Dup}, respectively.
\section{Remark on the difference property of affine quantum cluster
algebras}In this section, we prove that bases in Theorem
\ref{affine} are $\mathbb{ZP}$-bases for quantum cluster algebras of
type $\widetilde{A}$ and $\widetilde{D}$. This refines Theorem
\ref{affine}.
\subsection{The case in  type $\widetilde{A}_{r,s}$}
Let $Q$ be a quiver of type $\widetilde{A}_{r, s}$ as follows.
$$
\xymatrix@R=0.8pc{&2\ar[r]&\cdots\ar[r]&r\ar[rd]&\\
1\ar[ur]\ar[dr]&&&&r+1\\
&r+s\ar[r]&\cdots\ar[r]&r+2\ar[ur]&}
$$
There are two non-homogeneous tubes (denoted by $\mathcal{T}_0,
\mathcal{T}_\infty$) in the set of indecomposable regular modules.
The minimal imaginary root of $Q$ is $\delta=(1,1, \cdots, 1).$ Let
$E(\lambda)$ be the regular simple module in the homogeneous tube
for $\lambda\in \mathbb{P}^1$ of degree $1$. The regular simple
modules in $\mathcal{T}_0$ are
$$ \xymatrix@R=0.5pc{&&k\ar[r]&\cdots\ar[r]&k\ar[rd]^{1}&\\
E^{(0)}_{t}=S_{r+t+1} \mbox{ for }1\leq t\leq s-1,\quad E^{(0)}_s: &k\ar[ur]\ar[dr]&&&&k\\
&&0\ar[r]&\cdots\ar[r]&0\ar[ur]^{0}&}  $$ The regular module
$E^{(0)}_1[s]$ has the form as follows
$$
\xymatrix@R=0.8pc{&k\ar[r]&\cdots\ar[r]&k\ar[rd]^{1}&\\
k\ar[ur]\ar[dr]&&&&k\\
&k\ar[r]&\cdots\ar[r]&k\ar[ur]^{0}&}
$$
By \cite{DXX}, we have
$$
|Gr_{\underline{e}}(E^{(0)}_1[s])|=|Gr_{\underline{e}}(E(\lambda))|+|Gr_{\underline{e}-\mathrm{\underline{dim}}S_{r+2}}(E^{(0)}_2[s-2])|
\ \ \ (\star)$$ The follow lemma can be viewed as the difference
property of quantum cluster algebra of $\widetilde{A}_{r, s}$.
\begin{proposition}\label{k1}
$$X_{E^{(0)}_1[s]}=X_{E(\lambda)}+q^{\frac{1}{2}}X_{E^{(0)}_2[s-2]}$$
\end{proposition}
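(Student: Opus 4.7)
The plan is to reduce the proposition to the combinatorial identity $(\star)$ on quiver Grassmannians by expanding both sides using the definition of the quantum Caldero-Chapoton map. The starting observation is that $E^{(0)}_1[s]$ and $E(\lambda)$ share the same dimension vector $\delta$, so their cluster characters share the same Laurent monomials $X^{\widetilde{B}\underline{e}-(\widetilde{I}-\widetilde{R})\delta}$ and the same $q$-weights $q^{-\frac{1}{2}\langle \underline{e},\delta-\underline{e}\rangle}$; only the Grassmannian counts differ. Subtracting and applying $(\star)$ term-by-term yields
$$X_{E^{(0)}_1[s]}-X_{E(\lambda)}=\sum_{\underline{e}}|\mathrm{Gr}_{\underline{e}-\underline{s}}(E^{(0)}_2[s-2])|\,q^{-\frac{1}{2}\langle \underline{e},\delta-\underline{e}\rangle}X^{\widetilde{B}\underline{e}-(\widetilde{I}-\widetilde{R})\delta},$$
where $\underline{s}=\underline{\dim}\,S_{r+2}$. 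Reindexing by $\underline{e}':=\underline{e}-\underline{s}$, the goal is to show that this sum coincides with $q^{1/2}X_{E^{(0)}_2[s-2]}$.

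Setting $\underline{m}':=\underline{\dim}\,E^{(0)}_2[s-2]$, the comparison with $X_{E^{(0)}_2[s-2]}$ amounts to verifying two bookkeeping identities, independent of $\underline{e}'$:
\begin{enumerate}
\item[(a)] the monomial identity $\widetilde{B}\underline{s}=(\widetilde{I}-\widetilde{R})(\delta-\underline{m}')$, so that after reindexing $\widetilde{B}(\underline{e}'+\underline{s})-(\widetilde{I}-\widetilde{R})\delta=\widetilde{B}\underline{e}'-(\widetilde{I}-\widetilde{R})\underline{m}'$;
\item[(b)] the $q$-power identity $\langle \underline{e}'+\underline{s},\delta-\underline{e}'-\underline{s}\rangle=\langle \underline{e}',\underline{m}'-\underline{e}'\rangle-1$.
\end{enumerate}
Identity (a) expresses that the two sides encode the same index in the cluster category. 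It can be verified from Lemma \ref{1} together with the short exact sequences
$$0\to S_{r+2}\to E^{(0)}_1[s-1]\to E^{(0)}_2[s-2]\to 0,\qquad 0\to E^{(0)}_1[s-1]\to E^{(0)}_1[s]\to E^{(0)}_s\to 0,$$
which relate $\underline{s}$, $\underline{m}'$, $\underline{\dim}E^{(0)}_s$, and $\delta$.

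For identity (b), I would substitute $\delta-\underline{s}=\underline{m}'+\underline{\dim}E^{(0)}_s$ and expand; the $\underline{e}'$-dependence collapses to the requirement that $\langle \underline{e}',\underline{\dim}E^{(0)}_s\rangle=\langle \underline{s},\underline{e}'\rangle$ for every $\underline{e}'$. This is precisely the Auslander--Reiten duality identity $[X,S_{r+2}]^1=[S_{r+2},\tau X]$ applied to $\tau S_{r+2}=\tau E^{(0)}_1=E^{(0)}_s$, read at the level of Euler forms. The remaining constant piece $\langle \underline{s},\delta-\underline{s}\rangle+\langle \underline{s},\underline{\dim}E^{(0)}_s\rangle=-1$ is a finite combinatorial check from the shape of the tube $\mathcal{T}_0$ of rank $s$.

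The main obstacle is the scalar shift by $q^{1/2}$, captured by the $-1$ on the right of identity (b): it is this exact constant that produces the factor $q^{\frac{1}{2}}$ in the statement, and verifying it requires using that $E^{(0)}_1=S_{r+2}$ is a simple at the mouth of the tube rather than an arbitrary regular simple. Once (a) and (b) are in hand, the proof is concluded by reading off the equality coefficient-by-coefficient. Note that this mirrors the classical difference formula in \cite{DXX}, \cite{Dup}, specialized to the quantum setting, so the underlying combinatorics behind $(\star)$ is already known; the work here is the $q$-deformation of the argument.
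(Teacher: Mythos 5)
Your strategy is the same as the paper's: expand both sides under the quantum Caldero--Chapoton map, apply $(\star)$, reindex by $\underline{e}'=\underline{e}-\underline{s}$, and reduce everything to the monomial identity (a) and the exponent identity (b). That is exactly how Proposition \ref{k1} is proved in the text, so the skeleton is right. However, two of your justifications do not go through as written. For (a), Lemma \ref{1} is not the relevant input (it only compares $\Lambda$ with the Euler form); what is needed, and what the paper uses in the last line of its computation, is the Coxeter/translation identity $(\widetilde{I}-\widetilde{R}^{tr})\underline{s}+(\widetilde{I}-\widetilde{R})\tau_Q\underline{s}=0$ for the non-projective simple $S_{r+2}$, combined with $\delta-\underline{m}'=\underline{s}+\tau_Q\underline{s}$ (your two exact sequences) and $\widetilde{B}=\widetilde{R}^{tr}-\widetilde{R}$.

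The more serious issue is in (b). Auslander--Reiten duality read on Euler forms gives $\langle\underline{e}',\tau_Q\underline{s}\rangle=-\langle\underline{s},\underline{e}'\rangle$, with a minus sign, so your claimed collapse $\langle\underline{e}',\underline{\mathrm{dim}}E^{(0)}_s\rangle=+\langle\underline{s},\underline{e}'\rangle$ is not a formal consequence of AR duality; it is equivalent to the vanishing $\langle\underline{s},\underline{e}'\rangle=0$ for every dimension vector $\underline{e}'$ of a submodule of $E^{(0)}_2[s-2]$, and this has to be checked directly from the shape of the tube (it is exactly the paper's step replacing $-\langle\underline{v},\tau_Q\underline{s}\rangle+\langle\underline{s},\underline{v}\rangle$ by $2\langle\underline{s},\underline{v}\rangle=2$, i.e.\ the evaluation $\langle\underline{s},\underline{v}\rangle=1$ for the occurring $\underline{v}=\underline{e}'+\underline{s}$). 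In addition, your stated constant is off: since $\underline{\mathrm{dim}}E^{(0)}_s=\tau_Q\underline{s}$ and $\delta-\underline{s}=\underline{m}'+\tau_Q\underline{s}$, the quantity $\langle\underline{s},\delta-\underline{s}\rangle+\langle\underline{s},\underline{\mathrm{dim}}E^{(0)}_s\rangle$ double-counts $\langle\underline{s},\tau_Q\underline{s}\rangle=-\langle\underline{s},\underline{s}\rangle=-1$ and evaluates to $-2$; the constant actually required is $\langle\underline{s},\underline{m}'\rangle+\langle\underline{s},\tau_Q\underline{s}\rangle=-1$, which, together with $\langle\underline{s},\delta\rangle=0$ and $\langle\underline{s},\underline{s}\rangle=1$, is what produces the factor $q^{\frac{1}{2}}$. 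So the reduction to (a) and (b) matches the paper, but the verification of (b) needs to be redone along the lines above.
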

\begin{proof}
By the above equation $(\star)$, we have
\begin{eqnarray}
   X_{E^{(0)}_1[s]} &=& \sum_{\underline{v}} |\mathrm{Gr}_{\underline{v}} E^{(0)}_1[s]|q^{-\frac{1}{2}
\langle\underline{v},\underline{\delta}-\underline{v}\rangle}X^{\widetilde{B}\underline{v}-(\widetilde{I}-\widetilde{R})\underline{\delta}} \nonumber\\
&=& \sum_{\underline{v}} |\mathrm{Gr}_{\underline{v}}
E(\lambda)|q^{-\frac{1}{2}
\langle\underline{v},\underline{\delta}-\underline{v}\rangle}X^{\widetilde{B}\underline{v}-(\widetilde{I}-\widetilde{R})\underline{\delta}}+
\sum_{\underline{v}\neq \delta}
|\mathrm{Gr}_{\underline{v}-\underline{s_{r+2}}}
E^{(0)}_2[s-2]|q^{-\frac{1}{2}
\langle\underline{v},\underline{\delta}-\underline{v}\rangle}X^{\widetilde{B}\underline{v}-(\widetilde{I}-\widetilde{R})\underline{\delta}}\nonumber\\
  &=& X_{E(\lambda)}+q^{\frac{1}{2}}\sum_{\underline{v}'} |\mathrm{Gr}_{\underline{v}'} E^{(0)}_2[s-2]|q^{-\frac{1}{2}
\langle\underline{v}',\underline{e^{(0)}_2[s-2]}-\underline{v}'\rangle}X^{\widetilde{B}\underline{v}'-(\widetilde{I}-\widetilde{R})\underline{e^{(0)}_2[s-2]}}\nonumber\\
&=& X_{E(\lambda)}+q^{\frac{1}{2}}X_{E^{(0)}_2[s-2]}.\nonumber
\end{eqnarray}
Here we use the  facts that
\begin{eqnarray}
    \langle\underline{v}',\underline{e^{(0)}_2[s-2]}-\underline{v}'\rangle &=& \langle\underline{v}-\underline{s_{r+2}},
    \delta-\underline{v}-\tau_{Q}\underline{s_{r+2}}\rangle \nonumber\\
&=&
\langle\underline{v},\underline{\delta}-\underline{v}\rangle-\langle\underline{v},\tau_{Q}\underline{s_{r+2}}\rangle
-\langle\underline{s_{r+2}},\delta\rangle+\langle\underline{s_{r+2}},\tau_{Q}\underline{s_{r+2}}\rangle+\langle\underline{s_{r+2}},\underline{v}\rangle\nonumber\\
&=& \langle\underline{v},\underline{\delta}-\underline{v}\rangle
-\langle\underline{s_{r+2}},\delta\rangle+\langle\underline{s_{r+2}},\tau_{Q}\underline{s_{r+2}}\rangle+2\langle\underline{s_{r+2}},\underline{v}\rangle\nonumber\\
   &=& \langle\underline{v},\underline{\delta}-\underline{v}\rangle-0-1+2\nonumber\\
   &=& \langle\underline{v},\underline{\delta}-\underline{v}\rangle+1.\nonumber
\end{eqnarray}
And
\begin{eqnarray}
   && \widetilde{B}\underline{v}'-(\widetilde{I}-\widetilde{R})\underline{e^{(0)}_2[s-2]} \nonumber\\
   &=&\widetilde{B}(\underline{v}-\underline{s_{r+2}})-(\widetilde{I}-\widetilde{R})(\delta-\underline{s_{r+2}}-\tau_{Q}\underline{s_{r+2}})\nonumber\\
  &=& \widetilde{B}\underline{v}-(\widetilde{I}-\widetilde{R})\underline{\delta}-\widetilde{B}\underline{s_{r+2}}+
  (\widetilde{I}-\widetilde{R})(\underline{s_{r+2}}+\tau_{Q}\underline{s_{r+2}})\nonumber\\
 &=& \widetilde{B}\underline{v}-(\widetilde{I}-\widetilde{R})\underline{\delta}-(\widetilde{R}^{tr}-\widetilde{R})\underline{s_{r+2}}+
  (\widetilde{I}-\widetilde{R})(\underline{s_{r+2}}+\tau_{Q}\underline{s_{r+2}})
\nonumber\\
&=&
\widetilde{B}\underline{v}-(\widetilde{I}-\widetilde{R})\underline{\delta}+(\widetilde{I}-\widetilde{R}^{tr})\underline{s_{r+2}}+
  (\widetilde{I}-\widetilde{R})\tau_{Q}\underline{s_{r+2}}
\nonumber\\
 &=&\widetilde{B}\underline{v}-(\widetilde{I}-\widetilde{R})\underline{\delta}.\nonumber
\end{eqnarray}
\end{proof}

\subsection{The case in  type  $\widetilde{D}_{r}$}

Let $Q$ be a tame quiver of type $\widetilde{D}_r$ for $r\geq 4$ as
follows
$$
\xymatrix@R=0.8pc{r\ar[rd]&&&&&1\\
&r-1\ar[r]&\cdots\ar[r]&4\ar[r]&3\ar[ru]\ar[rd]&\\
r+1\ar[ru]&&&&&2}
$$
There are three non-homogeneous tubes (denoted by $\mathcal{T}_0,
\mathcal{T}_1, \mathcal{T}_{\infty}$). The minimal imaginary root of
$Q$ is $\delta=(1,1,2,\cdots,2,1,1).$ Let $E(\lambda)$ be the
regular simple module in the homogeneous tube for $\lambda\in
\mathbb{P}^1$ of degree $1$.
 The regular simple modules in $\mathcal{T}_1$
are
$$
\xymatrix@R=0.6pc{&k\ar[rd]^{1}&&&&k\\
E^{(1)}_t=S_{t+2} \mbox{ for } 1\leq t \leq r-3,
\quad E^{(1)}_{r-2}=:&&k\ar[r]^1&\cdots\ar[r]^1&k\ar[ru]^{1}\ar[rd]_{1}&\\
&k\ar[ru]_{1}&&&&k}
$$
By \cite{DXX}, we have
$$
|Gr_{\underline{e}}(E^{(1)}_1[r-2])|=|Gr_{\underline{e}}(E(\lambda))|+|Gr_{\underline{e}-\mathrm{\underline{dim}}S_3}(E^{(1)}_2[r-4])|.
$$

Similar to Lemma \ref{k1}, we have
\begin{proposition}\label{k3}
$$X_{E^{(1)}_1[r-2]}=X_{E(\lambda)}+q^{\frac{1}{2}}X_{E^{(1)}_2[r-4]}.$$
\end{proposition}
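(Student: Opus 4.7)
The proof follows the template of Proposition \ref{k1} essentially verbatim, with the role of the simple $S_{r+2}$ replaced by $S_3$ (the simple at the central branching vertex of $\widetilde{D}_r$, which by the stated identification equals $E^{(1)}_1$) and the tube $\mathcal{T}_0$ replaced by $\mathcal{T}_1$. I first expand $X_{E^{(1)}_1[r-2]}$ using the quantum Caldero--Chapoton map, then split the summation using the Grassmannian identity
\[
|\mathrm{Gr}_{\underline{e}}(E^{(1)}_1[r-2])| = |\mathrm{Gr}_{\underline{e}}(E(\lambda))| + |\mathrm{Gr}_{\underline{e}-\underline{s_3}}(E^{(1)}_2[r-4])|
\]
recorded above from \cite{DXX}. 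The first summand immediately produces $X_{E(\lambda)}$, since both modules share dimension vector $\delta$.

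For the second summand I substitute $\underline{v}' = \underline{v} - \underline{s_3}$ and use $\underline{e^{(1)}_2[r-4]} = \delta - \underline{s_3} - \tau_Q \underline{s_3}$, which holds because the tube $\mathcal{T}_1$ has rank $r-2$ and $E^{(1)}_{r-2} = \tau_Q E^{(1)}_1$. The computation then reduces to two arithmetic checks, completely parallel to Proposition \ref{k1}. The first is the Euler-form shift
\[
\langle \underline{v}', \underline{e^{(1)}_2[r-4]} - \underline{v}'\rangle = \langle \underline{v}, \underline{\delta} - \underline{v}\rangle + 1,
\]
which, via the AR duality relation $\langle \underline{v}, \tau_Q \underline{s_3}\rangle = -\langle \underline{s_3}, \underline{v}\rangle$, collapses to $-\langle \underline{s_3}, \delta\rangle + \langle \underline{s_3}, \tau_Q \underline{s_3}\rangle + 2\langle \underline{s_3}, \underline{v}\rangle = 1$. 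The second is that $\widetilde{B}\underline{v}' - (\widetilde{I}-\widetilde{R})\underline{e^{(1)}_2[r-4]}$ equals $\widetilde{B}\underline{v} - (\widetilde{I}-\widetilde{R})\underline{\delta}$, which, via $\widetilde{B} = \widetilde{R}^{tr} - \widetilde{R}$, boils down to the Coxeter-type identity $(\widetilde{I}-\widetilde{R}^{tr})\underline{s_3} + (\widetilde{I}-\widetilde{R})\tau_Q \underline{s_3} = 0$, valid since $S_3$ is neither projective nor injective in $k\widetilde{Q}$.

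The numerical inputs exploit the local structure at vertex $3$: its outgoing arrows are $3 \to 1$ and $3 \to 2$, and its only incoming arrow is $4 \to 3$. Direct computation gives $\langle \underline{s_3}, \delta\rangle = \delta_3 - \delta_1 - \delta_2 = 2 - 1 - 1 = 0$ and $\langle \underline{s_3}, \tau_Q \underline{s_3}\rangle = 1 - 1 - 1 = -1$, matching the values appearing in Proposition \ref{k1}. Finally $\langle \underline{s_3}, \underline{v}\rangle = v_3 - v_1 - v_2 = 1$ on the range of $\underline{v}$ relevant to the sum, because $E^{(1)}_2[r-4]$ is supported on vertices $\{4, \ldots, r-1\}$, so any submodule has $v'_1 = v'_2 = v'_3 = 0$ and hence $\underline{v} = \underline{v}' + \underline{s_3}$ satisfies $v_3 = 1$, $v_1 = v_2 = 0$. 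With these evaluations the remaining bookkeeping to assemble the $q^{1/2}$ prefactor together with $X_{E^{(1)}_2[r-4]}$ goes through exactly as in Proposition \ref{k1}. The mildly nontrivial step, but not a real obstacle, is checking that the branching-vertex arithmetic for $\widetilde{D}_r$ conspires to give the same three scalars as the two-valent vertex for $\widetilde{A}_{r,s}$; once that is done, the identity follows.
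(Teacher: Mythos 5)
Your proof is correct and follows exactly the route the paper intends: Proposition \ref{k3} is stated with only the remark ``Similar to Lemma \ref{k1}'', and your adaptation (replacing $S_{r+2}$ by $S_3=E^{(1)}_1$, invoking the Grassmannian identity from \cite{DXX}, and verifying the Euler-form shift $+1$ together with the exponent identity via $(\widetilde{I}-\widetilde{R}^{tr})\underline{s_3}+(\widetilde{I}-\widetilde{R})\tau_Q\underline{s_3}=0$) is precisely that computation carried out in detail, with the correct values $\langle \underline{s_3},\delta\rangle=0$, $\langle \underline{s_3},\tau_Q\underline{s_3}\rangle=-1$, $\langle \underline{s_3},\underline{v}\rangle=1$. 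The only quibble, your aside that vertex $3$ has a single incoming arrow (false for $r=4$), is immaterial, since only the arrows leaving vertex $3$ enter the forms $\langle \underline{s_3},-\rangle$ used in the computation.
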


As a direct corollary of Proposition \ref{k1} and \ref{k3}, we
obtain the following refinement of Theorem \ref{affine}.
\begin{corollary}
Let  $\widetilde{Q}$ be reachable from $\widetilde{Q'}$ for any
graded quiver $Q'$ of type $\widetilde{A}$ and $\widetilde{D}$. Then
the set
$$\mathcal{B}(Q):=\{X_{\underline{d}}|\underline{d}\in \mathbb{Z}^{Q_0}\}$$
is a $\mathbb{ZP}$-basis of $\Acal_{|k|}(\widetilde{Q})$.
\end{corollary}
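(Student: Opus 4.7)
My plan is to bootstrap from Theorem \ref{affine} (which already furnishes the $\mathbb{QP}$-basis property) by invoking the difference identities of Propositions \ref{k1} and \ref{k3} to clear denominators. Linear independence of $\mathcal{B}(Q)$ over $\mathbb{ZP}$ is inherited from the $\mathbb{QP}$-statement, so the task splits into: (i) showing every $X_{\underline{d}} \in \mathcal{B}(Q)$ actually belongs to $\Acal_{|k|}(\widetilde{Q})$ itself, and (ii) showing that these elements span $\Acal_{|k|}(\widetilde{Q})$ over $\mathbb{ZP}$.

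For (i), the elements built purely from rigid objects already sit in $\Acal_{|k|}(\widetilde{Q})$ by the main theorem of \cite{fanqin}, so the only suspect elements are $X^n_{E(\lambda)} X_R$ with $E(\lambda)$ a homogeneous regular simple of dimension vector $\delta$; these arose in the proof of Theorem \ref{affine} from inverting the factor $(q+1-t)(q-1)$. For types $\widetilde{A}$ and $\widetilde{D}$, Propositions \ref{k1} and \ref{k3} rewrite
$$X_{E(\lambda)} = X_{E_1^{(\ast)}[r]} - q^{1/2} X_{E_2^{(\ast)}[r-2]},$$
where the right-hand side lives entirely in a non-homogeneous tube. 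I would then verify by induction on quasi-length, using Lemma \ref{k2} as the recursion (and noting that the auxiliary $X_{I[-1]}$ term is a monomial in $\mathbb{ZP}$ because $I$ is the injective attached to frozen vertices), that each $X_{E_i^{(\ast)}[m]}$ lies in $\Acal_{|k|}(\widetilde{Q})$, with base case furnished by the rigid regular simples. Hence $X_{E(\lambda)} \in \Acal_{|k|}(\widetilde{Q})$, and therefore so does every $X^n_{E(\lambda)} X_R$.

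For (ii), I would rerun the graded-quiver argument from the proof of Theorem \ref{affine}, but now track coefficients in $\mathbb{ZP}$ throughout. For a graded $Q'$, each $X_{\underline{m}} \in \mathcal{B}(Q')$ lies in $\Acal_{|k|}(\widetilde{Q'})$ by step (i), so its expansion against the Proposition \ref{10} $\mathbb{ZP}$-basis has coefficients in $\mathbb{ZP}$. Propositions \ref{prop:supportconeCK1} and \ref{prop:graduationCK1} then identify the $\epsilon$-leading term as $b_{\underline{m}}\prod_i (X^{\widetilde{Q'}}_{S_i})^{m_i^+}(X^{\widetilde{Q'}}_{P_i[1]})^{m_i^-}$ with $b_{\underline{m}}$ a monomial in $\{q^{\pm 1/2}, X^{\pm 1}_{n+1}, \ldots, X^{\pm 1}_{m}\}$, which is a unit of $\mathbb{ZP}$. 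The transition matrix is therefore triangular with unit diagonal, hence invertible over $\mathbb{ZP}$, establishing the basis property for $Q'$. The general case follows by transporting along the reflection isomorphisms $\Phi_{i_l}\cdots\Phi_{i_1}$ via Theorem \ref{ref}, each of which is already a $\mathbb{ZP}$-algebra isomorphism.

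The main obstacle will be the inductive verification in step (i): Lemma \ref{k2} expresses $X_{E_i[r]}$ in terms of $X_{E_i[r-1]} X_{E_{i-1}}$ and $X_{E_i[r-2]\oplus I[-1]}$, so I must confirm that solving this recurrence upward in quasi-length introduces no denominators beyond units of $\mathbb{ZP}$, and that the ``wrap-around'' case where Propositions \ref{k1} or \ref{k3} must be applied (at quasi-length equal to the tube rank) is handled consistently with the induction hypothesis for shorter quasi-lengths in neighbouring orbits.
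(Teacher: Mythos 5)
Your proposal is correct and follows essentially the same route the paper intends: the difference identities of Propositions \ref{k1} and \ref{k3} express $X_{E(\lambda)}$ as a $\mathbb{ZP}$-combination of images of modules in a non-homogeneous tube, which together with Lemma \ref{k2} (and rigidity of the quasi-length $<r$ modules, making them quantum cluster variables by \cite{fanqin}) places $X_{E(\lambda)}$ in $\Acal_{|k|}(\widetilde{Q})$ over $\mathbb{ZP}$, so the graded-quiver triangularity argument of Theorem \ref{affine} goes through integrally and transfers by Theorem \ref{ref}. Your only excess is the induction on quasi-length: a single application of Lemma \ref{k2} at quasi-length equal to the tube rank suffices, since all shorter quasi-length regulars are already rigid.
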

Now  assume that $Q$ is a tame quiver. Let $E$ be a regular simple
module in a nonhomogeneous tube with rank $n$ and $E(\lambda)$  the
regular simple module in the homogeneous tube for $\lambda\in
\mathbb{P}^1$ of degree $1$. We conjecture that:
\begin{conjecture}
With the above notations, we have
$$X_{E[n]}=X_{E(\lambda)}+q^{\frac{1}{2}}X_{(\tau_{Q}^{-1} E)[n-2]}.$$
\end{conjecture}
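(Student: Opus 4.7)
The plan is to imitate the strategy used in the proofs of Propositions \ref{k1} and \ref{k3}, each of which factors into (a) a Grassmannian identity at the level of counts of subrepresentations, and (b) a numerical verification that the $q$-exponent and the lattice-vector exponent in the quantum Caldero-Chapoton expansion match up under the dimension-vector substitution $\underline{v}' = \underline{v} - \underline{\dim}E$. I expect the same two-step structure to work for an arbitrary tame quiver $Q$ and an arbitrary regular simple $E$ in a non-homogeneous tube of rank $n$, with the essential difficulty concentrated in step (a).

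First, I would establish the Grassmannian identity
\begin{equation*}
|\mathrm{Gr}_{\underline{e}}(E[n])| = |\mathrm{Gr}_{\underline{e}}(E(\lambda))| + |\mathrm{Gr}_{\underline{e}-\underline{\dim}E}\bigl((\tau_Q^{-1}E)[n-2]\bigr)|,
\end{equation*}
valid for every $\underline{e}\in\mathbb{N}^{Q_0}$. As in the cases treated in \cite{DXX}, the key observation is that subrepresentations of $E[n-1] \subset E[n]$ containing the regular socle $E$ correspond, by quotienting out $E$, to subrepresentations of $(\tau_Q^{-1}E)[n-2] \cong E[n-1]/E$, which accounts for the second term; the remaining subrepresentations of $E[n]$ should match $\mathrm{Gr}_{\underline{e}}(E(\lambda))$ via a generic-position argument analogous to the proof of Lemma \ref{independent}. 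Carrying this bijection out uniformly for all non-homogeneous tubes in all tame types is the principal obstacle, since the regular simples in the exceptional types $\widetilde{E}_6, \widetilde{E}_7, \widetilde{E}_8$ are no longer $kQ$-simple and the explicit case analysis of \cite{DXX} does not transfer verbatim.

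Next, I would verify the numerical matching. With $\underline{d} := \underline{\dim}(\tau_Q^{-1}E)[n-2] = \delta - \underline{\dim}E - \underline{\dim}\tau_Q E$ and $\underline{v}' := \underline{v} - \underline{\dim}E$, the required identities are
\begin{equation*}
\langle \underline{v}', \underline{d} - \underline{v}'\rangle = \langle \underline{v}, \delta - \underline{v}\rangle + 1
\quad\text{and}\quad
\widetilde{B}\underline{v}' - (\widetilde{I}-\widetilde{R})\underline{d} = \widetilde{B}\underline{v} - (\widetilde{I}-\widetilde{R})\delta.
\end{equation*}
The first follows, after bilinear expansion, from the standard regular-module facts $\langle \underline{\dim}E, \delta\rangle = \langle \delta, \underline{\dim}E\rangle = 0$ and $\langle \underline{\dim}E, \underline{\dim}\tau_Q E\rangle = -1$, together with the Auslander-Reiten relation $\langle \underline{\dim}E, \underline{v}\rangle + \langle \underline{v}, \underline{\dim}\tau_Q E\rangle = 0$; this is precisely the step hidden in the calculation at the end of the proof of Proposition \ref{k1}. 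The second, after substituting $\widetilde{B} = \widetilde{R}^{tr} - \widetilde{R}$, collapses to $(\widetilde{I}-\widetilde{R}^{tr})\underline{\dim}E + (\widetilde{I}-\widetilde{R})\underline{\dim}\tau_Q E = 0$, whose verification is the content of the final display in the proof of Proposition \ref{k1} and depends only on $E$ being a regular $kQ$-module. Combining (a) and (b) in the definition of $X_{E[n]}$ then produces the desired identity; once the Grassmannian identity is established for all non-homogeneous tubes in tame type, step (b) is uniform and formal, and the specialization at $q=1$ recovers the classical difference property of \cite{DXX} and \cite{Dup}.
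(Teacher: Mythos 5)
The statement you are proving is stated in the paper as a \emph{conjecture}: the paper itself only establishes it for the non-homogeneous tubes in types $\widetilde{A}$ and $\widetilde{D}$ (Propositions \ref{k1} and \ref{k3}), by quoting the counting identity of \cite{DXX} for those quivers and then matching exponents. Your proposal reproduces exactly this two-step reduction, but the first step --- the identity $|\mathrm{Gr}_{\underline{e}}(E[n])|=|\mathrm{Gr}_{\underline{e}}(E(\lambda))|+|\mathrm{Gr}_{\underline{e}-\underline{\mathrm{dim}}E}((\tau_Q^{-1}E)[n-2])|$ for an arbitrary non-homogeneous tube of an arbitrary tame quiver --- is precisely the open content of the conjecture. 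You acknowledge that the bijective/generic-position argument does not transfer to the exceptional tubes of $\widetilde{E}_6,\widetilde{E}_7,\widetilde{E}_8$ (where the quasi-socles are not $kQ$-simple and $\delta$ is no longer $(1,\dots,1)$), and no substitute argument is supplied. So what you have is a reduction of the conjecture to the classical difference property in full tame generality, not a proof; for the cases where that property is actually known, the result is already Propositions \ref{k1} and \ref{k3}.

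There is also a gap in the part you call ``uniform and formal.'' Writing $\underline{e}=\underline{\mathrm{dim}}E$ and expanding with the facts you list ($\langle \underline{e},\delta\rangle=0$, $\langle \underline{e},\tau\underline{e}\rangle=-1$, and $\langle \underline{e},\underline{v}\rangle+\langle \underline{v},\tau\underline{e}\rangle=0$), one gets $\langle \underline{v}',\underline{d}-\underline{v}'\rangle=\langle \underline{v},\delta-\underline{v}\rangle+2\langle \underline{e},\underline{v}\rangle-1$, so the desired shift by $+1$ requires in addition $\langle \underline{e},\underline{v}'\rangle=0$ for every dimension vector $\underline{v}'$ of a submodule of $(\tau_Q^{-1}E)[n-2]$. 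In the proof of Proposition \ref{k1} this is not formal: it follows from explicit support information ($\underline{v}'$ vanishes at the two vertices supporting $E^{(0)}_1$ and $\tau_Q E^{(0)}_1$), and since $\langle \underline{e},\underline{v}'\rangle=-[V',\tau E]$ by Auslander--Reiten duality, it can a priori fail when $V'$ has preprojective summands mapping nontrivially to $\tau E$, which must be ruled out tube by tube (or by a new argument) in the exceptional types. Until both the counting identity and this termwise vanishing are established beyond $\widetilde{A}$ and $\widetilde{D}$, the statement remains a conjecture, and your proposal does not close it.
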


\section*{Acknowledgements}
 The authors would like
 to  thank Professor Bangming Deng and Dr. Fan Qin for many helpful discussions and suggestions.



\begin{thebibliography}{99}
\bibitem{BMRRT} A. Buan, R. Marsh, M. Reineke, I. Reiten, G. Todorov, \emph{Tilting theory and cluster combinatorics,}
Adv. Math. \textbf{204} (2006), 572--618.

\bibitem{berzel}
A.~Berenstein and A.~Zelevinsky, {\em Quantum cluster algebras,}
Adv. Math. \textbf{195} (2005), 405--455.



\bibitem{caldchap}
P.~Caldero and F.~Chapoton,  \emph{Cluster algebras as Hall algebras
of quiver representations},  Comm. Math. Helv.  \textbf{81} (2006),
595--616.

\bibitem{CK1}P. Caldero, B. Keller, \emph{From triangulated categories to
cluster algebras}, Inv. Math. \textbf{172} (2008), 169--211.

\bibitem{CK2}P. Caldero, B. Keller, \emph{From triangulated categories to
cluster algebras II}, Ann. Sci. {\'E}cole Norm. Sup. \textbf{39} (4)
(2006), no. 6, 983--1009.

\bibitem{CR}
P.~Caldero and M.~Reineke, \emph{On the quiver Grassmannian in the
acyclic case}, Journal of Pure and Applied Algebra , 2008, 212 (11)
: 2369-2380.

\bibitem{calzel}
P.~Caldero and A.~Zelevinsky, \emph{Laurent expansions in cluster
algebras via quiver representations,}
 Moscow Math. J.  \textbf{6}  (2006),  no. 3, 411--429.

\bibitem{CB:lectures}
W.~Crawley-Boevey, \emph{Lectures on representations of quivers},
1992.

\bibitem{DX}
M.~Ding and  F.~Xu, \emph{Bases of the quantum cluster algebra of
the Kronecker quiver}, to appear in Acta Math. Sinica,
arXiv:1004.2349.

\bibitem{Ding-Xiao-Xu}
M.~Ding, J.~Xiao and F.~Xu, \emph{Realizing Enveloping Algebras via
Varieties of Modules}, Acta Math. Sinica \textbf{26} (2010), no. 1,
29--48.



\bibitem{DXX} M.~Ding, J.~Xiao and F.~Xu,
\emph{Integral bases of cluster algebras and representations of tame
quivers}, arXiv:0901.1937.




\bibitem{dlab}
V.~Dlab and C.~M.~Ringel, \emph{Indecomposable Representations of
Graphs and Algebras}, Mem.~Amer. Math. Soc. \textbf{173} (1976).

\bibitem{Dup}
G.~Dupont, \emph{Generic variables in acyclic cluster algebras}, J.
Pure and Appl. Alg. \textbf{215} (2011), 628--641, 2011.

\bibitem{ca1}
S.~Fomin and A.~Zelevinsky, \emph{Cluster algebras. I. Foundations,}
J. Amer. Math. Soc.  \textbf{15}  (2002),  no. 2, 497--529.

\bibitem{ca2}
S.~Fomin and A.~Zelevinsky, \emph{Cluster algebras. II. Finite type
classification},  Invent. Math.  \textbf{154}  (2003),  no. 1,
63--121.

\bibitem{Green}
J. A. Green, \emph{Hall algebras, hereditary algebras and quantum
groups}, Inv. Math. \textbf{120} (1995), 361--377.

\bibitem{GP}
J. Guo and L. Peng, Universal PBW-Basis of Hall-Ringel Algebras and
Hall Polynomials, J. Algebra. \textbf{198} (1997), 339-351.

\bibitem{Hubery}
A.~Hubery, \emph{Acyclic cluster algebras via Ringel-Hall algebras},
preprint (2005).
\bibitem{Hubery2}
A.~Hubery, \emph{Hall polynomials for affine quivers}, Represent.
Theory \textbf{14} (2010), 355-378.



\bibitem{fanqin}
F.~Qin,  \emph{Quantum Cluster Variables via Serre Polynomials,}
arXiv:1004.4171.

\bibitem{FZ} S. Fomin and A. Zelevinsky, \emph{Cluster algebras. I. Foundations}, J. Amer. Math. Soc. \textbf{15} (2002), no. 2, 497--529.

\bibitem{GLS2006} C. Geiss, B. Leclerc and J. Schr\"oer, \emph{Rigid modules over
preprojective algebras}, Invent. Math.  \textbf{165} (2006),
589--632.
\bibitem{GLS2007} C. Geiss, B. Leclerc, J. Schr\"oer, \textit{Semicanonical bases and
preprojective algebras II: A multiplication formula}, Compositio
Mathematica {\bf 143} (2007), 1313--1334.

\bibitem{GLS2010-1} C. Geiss, B. Leclerc, J. Schr\"oer, \textit{Kac-Moody groups and cluster
algebras}, arXiv:1001.3545
\bibitem{GLS2010-2} C. Geiss, B. Leclerc, J. Schr\"oer, \textit{Generic bases for cluster algebras and the chamber
ansatz}, arXiv:1004.2781.


\bibitem{ringel:1099}
C.~M.~Ringel, Tame algebras and integral quadratic forms,  {\em
Lecture Notes in Mathematics}, \textbf{1099} (1984).

\bibitem{ringel:2}
C.~M.~Ringel, \emph{Hall polynomials for the representation-finite
hereditary algebras}, Adv. Math. \textbf{84} (1990), 137-178.

\bibitem{rupel}
D.~Rupel, \emph{On quantum analogue of the Caldero-Chapoton
Formula,} arXiv:1003.2652.

\bibitem{sherzel}
P. Sherman, A. Zelevinsky, \emph{Positivity and canonical bases in
rank 2 cluster algebras of finite and affine types}, Moscow Math.
J., \textbf{4}  (2004),  no. 4, 947-974.

\bibitem{Zhu}
B.~Zhu, \emph{Equivalence between  cluster categories}, J. Algebra
\textbf{304} (2006), 832-850.




\end{thebibliography}
\end{document}